\documentclass[10pt]{article}
\usepackage{latexsym, amsmath, amssymb, amsthm}
\usepackage{enumerate, mathrsfs, bbm}
\usepackage{graphicx, tikz}
\usepackage{authblk, setspace, cite}

\usepackage[top=1 in, bottom=1 in, left=1.1 in, right=1.1 in]{geometry}
\usepackage[pdftex,linkcolor=red,citecolor=blue,backref=page]{hyperref}
\numberwithin{equation}{section}

\renewcommand{\doteq}{\mathrel{:=}}
\newcommand{\fr}{\frac}
\newcommand{\lf}{\left}
\newcommand{\ri}{\right}
\newcommand{\sgn}{\operatorname{sgn}}
\newcommand{\keywords}[1]{\par\noindent\textbf{Keywords:} #1}
\newtheorem{thm}{Theorem}[section]
\newtheorem{lem}[thm]{Lemma}

\newtheorem{col}[thm]{Corollary}

\newtheorem{rem}[thm]{Remark}
\newtheorem{cla}[thm]{Claim}
\def\be{\begin{equation}}     \def\ee{\end{equation}}
\def\ba{\begin{equation*}\begin{aligned}}
\def\ea{\end{aligned}\end{equation*}}
\def\les{\lesssim}
\def\gtr{\gtrsim}

	\title{Lower regularity well-posedness for a higher-order Schrödinger equation with cubic nonlinearities on the half-line}
	\author{Yuhao Xie\\
		School of Mathematical Sciences,\\
		University of Electronic Science and Technology of China,\\
		Chengdu, Sichuan 610054, China \\
		\texttt{xieyuhao@std.uestc.edu.cn}	\and
		Shenghao Li\footnote{Corresponding author} \\
		School of Mathematical Sciences,\\
		University of Electronic Science and Technology of China,\\
		Chengdu, Sichuan 610054, China \\
		\texttt{lish@uestc.edu.cn}}
	\date{}

\begin{document}
		\maketitle
		\begin{abstract}
			In this paper,	we continue the study of Himonas and Yan\cite{himonas2024schrodinger,himonas2026higher,himonas2024higher} on the nonlinear Schrödinger equation with a dispersion of order $2m$ and cubic nonlinearities,
		\[	iu_t+\left( -1 \right) ^{m+1}\partial _{x}^{2m}u=N_k\left( u,u,u \right), \]
			where, $m\ge1$ being an integer,
			\[N_0\left( u,u,u \right) =uuu,\quad N_1\left( u,u,u \right) =\bar{u}uu,\quad N_2\left( u,u,u \right) =\bar{u}\bar{u}u,\quad  N_3\left( u,u,u \right) =\bar{u}\bar{u}\bar{u},\]
			 We derive trilinear estimates at lower regularity and thereby prove that the cNLS-2m on the half-line is well-posed at the optimal regularity $s=-\fr{m-1}{2}$. This improves the previous result, $s>-\fr{m-1}{2}$, and  answers an open question left in \cite{himonas2026higher}. In addition, for $k=0,2$, we establish well-posedness at $s=-\fr{m-1}{2}$ as well. Moreover, for $k=3$, the nonlinearity exhibits a stronger resonance relation, which implies well-posedness for $s>-\fr{2m-1}{3}$. Our derivation of the trilinear estimates relies on varieties of Strichartz estimates, which differs from the $[k;Z]$-multiplier norm method employed in \cite{himonas2026higher}.
		\end{abstract}
		\noindent \textbf{Mathematics Subject Classification (2020):} 35Q53, 35Q55 (Primary), 35Q35 (Secondary).
		\noindent \keywords{Higher-order nonlinear Schrödinger equation, cubic nonlinearities, initial-boundary value problem, trilinear estimates, Strichartz estimates.}
	\section{Introduction}
	\hspace{1em}
	In this paper, we continue the study on the nonlinear Schrödinger equation with a dispersion of order $2m$ and cubic nonlinearities(cNLS-2m),
	\be\label{1.1}
	iu_t+\left( -1 \right) ^{m+1}\partial _{x}^{2m}u=N_k\left( u,u,u \right),  
    \ee
    where, $m\ge1$ being an integer,
    \be\label{1.2}
    N_0\left( u,u,u \right) =uuu,\quad N_1\left( u,u,u \right) =\bar{u}uu,\quad N_2\left( u,u,u \right) =\bar{u}\bar{u}u,\quad  N_3\left( u,u,u \right) =\bar{u}\bar{u}\bar{u},
    \ee
    which was started by Himonas and Yan\cite{himonas2024schrodinger,himonas2026higher}. In their work\cite{himonas2026higher,himonas2024higher,himonas2024schrodinger}, an open question(see Remark 1.1 of \cite{himonas2026higher}) was brought up, more precisely, they\cite{himonas2026higher} investigated the following initial-boundary value problem(IBVP),
    \be\label{1.3}
    \left\{ \begin{array}{ll}
    	iu_t+\left( -1 \right) ^{m+1}\partial _{x}^{2m}u=N_1\left( u,u,u \right),\ x\in\mathbb{R}^{+},\ t\in\mathbb{R}^{+},\\
    	u\left( x,0 \right) =u_0\left( x \right) ,\\
    	u\left( 0,t \right) =g_0\left( t \right) ,\ \partial_{x}^{j}u\left( 0,t \right) =g_j\left( t \right) ,\ j=1,2,...,m-1, \\
    \end{array} \right.
    \ee
     and established its well-posedness for $s>s_{m,k}$ for $k=1$, with initial data $u_0\in H^{s}(\mathbb{R}^+)$ and boundary data
     $g_j\in H^{\frac{2s+2m-1-2j}{4m}}(0,T)$, $j=0,1,\dots,m-1$, where the optimal regularity $s_{m,1}$ is given by
     \be\label{1.4}
     s_{m,1}\doteq-\frac{m-1}{2}.
     \ee
      In addition, they\cite{himonas2024schrodinger} considered different nonlinearities $N_k\left( u,u,u \right)$ and proved that the following IBVP,
      \be\label{1.5}
      \left\{ \begin{array}{ll}
      	iu_t+\partial _{x}^{2}u=N_k(u,u,u),\ x\in\mathbb{R}^{+},\ t\in\mathbb{R}^{+},\\
      	u\left( x,0 \right) =u_0\left( x \right) ,\\
      	u\left( 0,t \right) =g_0\left( t \right), \\
      \end{array} \right.
      \ee
       is wellposed for $s\ge s_{m,k}$ for $m=1$, where,
       \be\label{1.6}
       s_{1,0}=s_{1,1}=s_{1,2}=0,\ s_{1,3}=(-\frac{1}{3})^+.
       \ee 
       Although these IBVPs had been well studied by Himonas and Yan, as noted in their paper, the following two problems still remain open. 
       \begin{itemize}
       	\item \textbf{For $k=1$, and $m\ge2$, $m\in\mathbb{N}^+$, is \eqref{1.3} well-posed at $s=s_{m,k}$?}
       	\item \textbf{For $k\neq1$, and $m\ge2$, $m\in\mathbb{N}^+$, is it possible to establish low-regularity well-posedness for \eqref{1.3}, in a manner analogous to \eqref{1.4}?}
       \end{itemize}
    We aim to settle these two questions in the rest of this article. Let us first consider the pure initial-value problem(IVP) of linear cNLS-2m,
   \be\label{1.7}
   \left\{ \begin{array}{l}
   	iu_t+\left( -1 \right) ^{m+1}\partial _{x}^{2m}u=0,\ x\in \mathbb{R},\ t\in\mathbb{R},\\
   	u\left( x,0 \right) =u_0\left( x \right).\\
   \end{array} \right. 
   \ee
   The fundamental solution of \eqref{1.7} can be given by, \be\label{1.8}
   G(x,t)\doteq\int_{\mathbb{R}}{e^{ix\xi}e^{-i\xi ^{2m}t}d\xi},
   \ee then, the solution of \eqref{1.7} can be written as a semigroup, 
   \be\label{1.9}
   S\left( t \right) u_0\doteq G\left( \cdot ,t \right)\ast u_0\left( \cdot \right) =\int_{\mathbb{R}}{e^{ix\xi}e^{-i\xi ^{2m}t}\widehat{u_0}\left( \xi \right) d\xi},
   \ee where $\widehat{u_0}$ denotes the space Fourier transform of $u_0$. Inspired by the idea of Bourgain\cite{bourgain1993part1,bourgain1993part2}, Kenig-Ponce-Vega\cite{kenig1993cauchy,kenig1996bilinear}, the spatial Bourgain spaces for the NLS-2m equation is defined as,
   \be\label{1.10}
   \lVert u \rVert _{X^{s,b}}\doteq\lVert \left< \xi \right> ^s\left< \tau +\xi ^{2m} \right> ^b\widehat{u}\left( \xi ,\tau \right) \rVert _{L_{\xi ,\tau}^{2}},\quad
   \left< x \right>\doteq 1+\left| x \right|
   \ee
   $\widehat{u}$ is the the space-time Fourier transform of $u$. For technical reason, the temporal Bourgain spaces,
   \be\label{1.11}
   \lVert u \rVert _{Y^{s,b}}\doteq\lVert \left< \tau \right> ^{\frac{s}{2m}}\left< \tau +\xi ^{2m} \right> ^b\widehat{u}\left( \xi ,\tau \right) \rVert _{L_{\xi ,\tau}^{2}},
   \ee
   are needed for the IBVP as well. To state our results, we now briefly review Himona-Yan's work. As mentioned in \cite{himonas2026higher}, the well-posedness problem of IBVP for cNLS-2m(or even general dispersive equation) can be settled by three steps. First, the forced linear problem \eqref{4.10}(see Section \ref{sec 4}) can be solved by the Fokas unified transform method\cite{fokas2008unified} or boundary integral operator method\cite{bona2018nonhomogeneous}. The solution is then estimated in the associated Bourgain spaces. Finally, the key multilinear estimates need to be established. A standard contraction mapping argument then gives the well-posedness. Since technique for the linear part is quite mature, the key for the problem is to establish corresponding multilinear estimates. Himonas-Yan's results are as follows. 
   \begin{lem}\cite{himonas2026higher}\label{lem 1.1}
   	For any $s>s_{m,1}$ and integer $m\ge1$, one has the spatial Bourgain spaces estimates,   	
   	\be \label{1.12}
   	\lVert N_1\left( u_1,u_2,u_3 \right) \rVert _{X^{s,-b}}\les\prod_{j=1}^3{\lVert u_j \rVert _{X^{s,b^{\prime}}}},
   	\ee
   	where $\frac{1}{2}-\beta_1\le b^{\prime}\le b<\frac{1}{2}$, $\beta_1\doteq\min \left\{ \frac{1}{16},\frac{s-s_{m,1}}{12m} \right\}$ and the temporal Bourgain spaces estimates,
   	\be\label{1.13}
   	\lVert N_1\left( u_1,u_2,u_3 \right) \rVert _{Y^{s,-b}}\les\prod_{j=1}^3{\lVert u_j \rVert _{X^{s,b^{\prime}}}},
   	\ee
   	where $\frac{1}{2}-\beta_2\le b^{\prime}\le b<\frac{1}{2}$, $\beta_2\doteq\min \left\{ \beta_1,\frac{2m-s}{16m} \right\}$. In addition, the spatial Bourgain spaces estimates \eqref{1.12} are optimal in the sense that they fail for any $s<s_{m,1}$.
   \end{lem}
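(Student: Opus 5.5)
We will prove \eqref{1.12} by duality and a dyadic reduction, and deduce \eqref{1.13} from it; the counterexample for $s<s_{m,1}$ comes last. By duality, \eqref{1.12} amounts to bounding
\[
\int_{\xi_1-\xi_2+\xi_3=\xi,\ \tau_1-\tau_2+\tau_3=\tau}\frac{\langle\xi\rangle^{s}\prod_j f_j(\xi_j,\tau_j)\, f(\xi,\tau)}{\langle\sigma\rangle^{b}\prod_j\langle\xi_j\rangle^{s}\langle\sigma_j\rangle^{b'}}
\les\prod_j\|f_j\|_{L^2}\|f\|_{L^2},
\]
where $\sigma=\tau+\xi^{2m}$, $\sigma_j=\tau_j+\xi_j^{2m}$ (with the sign of $\sigma_2$ flipped because of the conjugate), and $f_j\ge0$. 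The key algebraic input is the resonance identity
\[
\sigma-\sigma_1+\sigma_2-\sigma_3=\xi^{2m}-\xi_1^{2m}+\xi_2^{2m}-\xi_3^{2m}.
\]
Using $\xi-\xi_1=\xi_3-\xi_2$ and $\xi-\xi_3=\xi_1-\xi_2$, the right side factors as $(\xi-\xi_1)(\xi-\xi_3)\,P_m(\xi,\xi_1,\xi_2,\xi_3)$. Here $P_m$ is homogeneous of degree $2m-2$, and on the hyperplane it satisfies $|P_m|\gtr |\xi_{\max}|^{2m-2}$ outside a lower-dimensional exceptional set. Hence
\[
\max(|\sigma|,|\sigma_j|)\gtr |\xi-\xi_1||\xi-\xi_3|\,N_{\max}^{2m-2}.
\]
We then localize all frequencies dyadically, $|\xi_j|\sim N_j$ and $|\xi|\sim N$, and split into cases.

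In the low–low case, $N_{\max}\les1$, and the bound follows from the $L^4_{x,t}$ Strichartz estimate $\|u\|_{L^4}\les\|u\|_{X^{0,3/8}}$. Together with its smoothing version, where $D_x^{(m-1)/4}$ gains derivatives in $L^4$, this is valid for $b'>3/8$, which leaves room when $b'$ is close to $1/2$.

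In the non-resonant high case, $|\xi-\xi_1|$ and $|\xi-\xi_3|$ are both $\gtr N_{\max}$. The largest modulation is then $\gtr N_{\max}^{2m}$, and this gives a factor $N_{\max}^{-2mb}$ or $N_{\max}^{-2mb'}$. The remaining three functions are estimated with the bilinear refinement $\|P_{N_1}u\,P_{N_2}v\|_{L^2}\les N_1^{-(2m-1)/2}\|u\|_{X^{0,b'}}\|v\|_{X^{0,b'}}$ for separated frequencies $N_1\gg N_2$, together with an $L^\infty_tL^2_x\subset$ Sobolev bound. The weights $\langle\xi\rangle^s/\prod\langle\xi_j\rangle^s$ cost at most $N_{\max}^{-2s}$ when $s<0$, which gives $N_{\max}^{-2s-2m+ (m+1)/2\cdot\text{(losses)}}$.

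The critical count is as follows. When the high output is fed by two high inputs and one low input, the worst configuration has $N_1\sim N_3\sim N$ high and $\xi_2$ low, that is, high–high–low interactions with $N\sim N_{\max}$. The weight factor is $N^{-2s}$, and the gain is $N^{-(2m-2)\cdot\frac12}$ from modulation plus bilinear smoothing. The exponent balances exactly at $s=-(m-1)/2$. Because $\langle\sigma\rangle^{b}$ with $b<1/2$ loses a power $N^{2m(1/2-b)}$, we need the margin $s-s_{m,1}>0$. This is the origin of $\beta_1\sim (s-s_{m,1})/(12m)$: choosing $1/2-b'\le\beta_1$ absorbs the loss and leaves an $N^{0-}$ factor, which lets us sum the dyadic pieces.

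In the resonant cases, $\xi\approx\xi_1$ or $\xi\approx\xi_3$, and the factorization gives no gain from $(\xi-\xi_1)$. These interactions are, however, essentially $|u|^2u$-type: the output frequency coincides with an input frequency. The weights then cancel, $\langle\xi\rangle^s\langle\xi_1\rangle^{-s}\sim1$, and the remaining product $\langle\xi_2\rangle^{-s}\langle\xi_3\rangle^{-s}$ with $\xi_2\approx\xi_3$ is handled by the factor $|\xi_3-\xi_2|N^{2m-2}$ combined with the bilinear estimate. We expect this step, done uniformly over the exceptional set of $P_m$ and for all $m$, to be the main obstacle.

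For \eqref{1.13}, on the region $|\tau|\les\langle\xi\rangle^{2m}$ the $Y$ weight is dominated by the $X$ weight, so \eqref{1.12} applies. When $|\tau|\gg\xi^{2m}$ we have $|\sigma|\sim|\tau|$, so $\langle\tau\rangle^{s/2m}\langle\sigma\rangle^{-b}\les\langle\sigma\rangle^{s/2m-b}$; for $s\le 2m$, after the constraint $b'\ge1/2-\beta_2$, this is controlled by the same estimates with $\beta_2\le(2m-s)/(16m)$.

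For the optimality claim, we take $\widehat{u_j}$ to be characteristic functions of boxes $\xi_1,\xi_3\in[N,N+1]$, $\xi_2\in[-1,1]$, with $\tau$-width $1$ near the respective curves $\tau=-\xi^{2m}$. The output lands at $\xi\approx 2N$ with $|\sigma|\sim N^{2m}$, or, choosing $\xi_2$ near $N$ in a resonant–near configuration, at $|\sigma|\sim N^{2m-2}\cdot$const. Computing both sides, the left side is of size $N^{s}N^{-(2m-2)b}$ and the right side of size $N^{3s}$. Letting $N\to\infty$ as $b\to1/2$, the estimate forces $s\ge-(m-1)/2$.
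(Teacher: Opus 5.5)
The paper does not prove this lemma. It quotes it from Himonas--Yan, whose argument (per the paper) uses Tao's $[k;Z]$-multiplier calculus and two bilinear estimates, so your dyadic/bilinear plan is in that spirit. As written, though, it leaves open exactly the interaction that fixes $s_{m,1}$.

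\textbf{The critical interaction is misidentified and left unresolved.} The configuration you call critical is not critical. For $|\xi_1|\sim|\xi_3|\sim N$ and $|\xi_2|\lesssim 1$ one has $|\xi-\xi_1|,|\xi-\xi_3|\sim N$, hence $|H|\sim N^{2m}$. The modulation alone then gives $N^{-2mb}\approx N^{-m}$, against a weight of at most $N^{2|s|}\le N^{m-1}$, so there is a full power to spare.

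Incidentally, set $a=\xi_1-\xi_2$, $c=\xi_3-\xi_2$, $f(y)=y^{2m}$. Then $P_m=\int_0^1\!\int_0^1 f''(\xi_2+ua+vc)\,du\,dv\sim N_{\max}^{2m-2}$ everywhere, so there is no exceptional set.

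The interaction that saturates $s=-\frac{m-1}{2}$ is the fully resonant one: all four frequencies $\sim N$, same sign, with $|a|,|c|\lesssim N^{-(m-1)}$. There $|H|\lesssim 1$, so modulation gives nothing. The bilinear gain $(|\xi_i-\xi_j|N^{2m-2})^{-1/2}$ is at best $N^{-(m-1)/2}$ for every pair, and it disappears as the separation shrinks.

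Your resonant-case sketch uses the factor $|\xi_3-\xi_2|N^{2m-2}$. Since $\xi-\xi_1=\xi_3-\xi_2$, that is precisely the factor that vanishes, so the step produces no gain. You need an input that does not depend on frequency separation.

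The paper's proof of Theorem \ref{thm 1.4} supplies one (Cases D.(b), E.(a)):
\begin{itemize}
\item local smoothing \eqref{2.32} on the top-frequency factor, the maximal estimate \eqref{2.33} on two other factors, and $L^2_{x,t}$ on the factor of largest modulation; this absorbs the weight $N^{m-1}=N^{(m-\frac12)-\frac14-\frac14}$ regardless of separation;
\item the modulation transfer $\langle L_4\rangle^{b}\prod_{j\le 3}\langle L_j\rangle^{b'}\gtrsim\prod_{j\ne j_0}\langle L_j\rangle^{b'+\frac b3}$, where $j_0$ indexes the largest modulation; this gives exponents $\rho>\frac12$ even though $b,b'<\frac12$.
\end{itemize}
You need the same transfer in your non-resonant case as well, because $X^{0,b'}\not\subset L^\infty_tL^2_x$ for $b'<\frac12$.

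\textbf{The reduction of \eqref{1.13} is reversed for $s<0$.} The bound $\langle\tau\rangle^{\frac{s}{2m}}\lesssim\langle\xi\rangle^{s}$ holds on $|\tau|\gtrsim\xi^{2m}$, so that is where \eqref{1.12} applies. On $|\tau|\ll\xi^{2m}$ the $Y$-weight exceeds the $X$-weight by up to $\langle\xi\rangle^{|s|}$. That excess must be paid from $\langle\sigma\rangle\sim\langle\xi\rangle^{2m}$, i.e.\ you must rerun the $X$-estimate with output exponent $b-\frac{|s|}{2m}$. This is the paper's region $\varOmega_2$ and condition \eqref{3.53}. Your bound $\langle\tau\rangle^{s/2m}\langle\sigma\rangle^{-b}\lesssim\langle\sigma\rangle^{s/2m-b}$ matters only for $s>0$.

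\textbf{The optimality computation omits the convolution volume.} With unit-width boxes at frequency $N$, the convolution is $\sim N^{-(2m-2)}$ spread over $|\sigma|\lesssim N^{2m-2}$. The left side is then $\sim N^{s-(m-1)(1+2b)}$ against $N^{3s}$, which for $b$ near $\frac12$ only excludes $s$ below about $-(m-1)$. The high--high--low box is weaker still.

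The sharp example takes all three boxes at frequency $N$, with $\xi$-width $\delta=N^{-(m-1)}$ and modulation width $1$, so that $|H|\lesssim 1$. Then the left side is $\sim N^{s}\delta^{5/2}$ and the right side is $\sim N^{3s}\delta^{3/2}$. The ratio $N^{-2s-(m-1)}$ blows up for every $s<-\frac{m-1}{2}$, independently of $b$.
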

\begin{rem}\label{rem 1.2}
	One can see from Lemma \ref{lem 1.1} that, as $s\to s_m$, we have, $\beta_2\to 0$, which leads to no choice for $b$ and $b^{\prime}$. Due to this fact, they can not show the well-posedness at the endpoint case, $s=s_{m,1}$.
\end{rem}
   On the other hand, for $m=1$, namely, the classical cubic nonlinear Schrödinger equation, they have the following result.
   \begin{lem}\cite{himonas2024schrodinger}\label{lem 1.3}
   	If $m=1$, then, for any $s\ge s_{1,k}$, one has,
   	\be\label{1.14}
   	\lVert N_k\left( u_1,u_2,u_3 \right) \rVert _{X^{s,-b}}\les\prod_{j=1}^3{\lVert u_j \rVert _{X^{s,b^{\prime}}}},
   	\ee
   	where $\frac{1}{2}-\alpha_k\le b^{\prime}\le b<\frac{1}{2}$, $\alpha _k=\min \left\{ \frac{1}{16},\frac{3s+1}{16} \right\}$, $s_{1,k}$ is defined by \eqref{1.6}.
   \end{lem}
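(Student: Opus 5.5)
We prove \eqref{1.14} by duality and Strichartz estimates rather than by a multiplier-norm computation. By duality, \eqref{1.14} is equivalent to
\[
\Big|\int \widehat{N_k}(\xi,\tau)\,\overline{\widehat{w}(\xi,\tau)}\,d\xi d\tau\Big|\les \lVert w\rVert_{X^{-s,b}}\prod_{j=1}^3\lVert u_j\rVert_{X^{s,b'}}.
\]
Write $\xi=\xi_1+\xi_2+\xi_3$, with the sign convention in each $\xi_j$ dictated by which factors are conjugated in $N_k$. Set $\sigma_0=\tau+\xi^2$ and $\sigma_j=\tau_j\pm\xi_j^2$. The key algebraic input is the resonance identity for each $k$:
\[
\sigma_0-\sum_j(\pm\sigma_j)=\Phi_k(\xi_1,\xi_2,\xi_3).
\]
For $k=1$ we have $\Phi_1=2(\xi_1-\xi)(\xi_2-\xi)$ up to relabeling, i.e.\ a product of two frequency differences. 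For $k=0$ and $k=2$, $\Phi_k$ is a quadratic form comparable to $\max|\xi_j|^2$ outside a small cone; for instance $\Phi_0=\xi^2-\sum\xi_j^2=2\sum_{i<j}\xi_i\xi_j$. For $k=3$ we get $\Phi_3=\xi^2+\xi_1^2+\xi_2^2+\xi_3^2\gtr \max|\xi_j|^2$, so it never vanishes. Hence $\max(\langle\sigma_0\rangle,\langle\sigma_j\rangle)\gtr|\Phi_k|$.

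\textbf{Case $k=0,1,2$ with $s=0$.} Here $\langle\xi\rangle^s$ is absent. Use Hölder in $L^4_{x,t}$: $\lVert u_1u_2u_3\rVert$ paired against $w$ is bounded by $\prod\lVert u_j\rVert_{L^4}\lVert w\rVert_{L^4}$. The $L^4$ Strichartz estimate $\lVert u\rVert_{L^4_{x,t}}\les\lVert u\rVert_{X^{0,b}}$ holds for any $b>3/8$. Conjugation does not affect $L^4$ norms, so this covers every $k$ at once. Since $1/2-\alpha_k\ge 7/16>3/8$, both $b'$ and $b$ exceed $3/8$ and the estimate follows. For $s>0$, distribute $\langle\xi\rangle^s\les\sum_j\langle\xi_j\rangle^s$ and repeat the argument.

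\textbf{Case $k=3$ with $-1/3<s<0$.} This is the main obstacle. Decompose dyadically, $|\xi_j|\sim N_j$ and $|\xi|\sim N$, and symmetrize so that $N_1\ge N_2\ge N_3$. When all frequencies are $\les 1$ the weights are harmless and the $L^4$ argument applies.

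Otherwise $\max\langle\sigma\rangle\gtr N_1^2$. If the maximal modulation sits on $w$, we gain $\langle\sigma_0\rangle^{-b}\les N_1^{-2b}$; if it sits on some $u_j$, we gain $N_1^{-2b'}$. In either case the gain is at least $N_1^{-7/8}$ when $b,b'\ge 7/16$. We then estimate the remaining three functions in $L^4$ (Strichartz with $b>3/8$) and the factor carrying the high modulation in $L^2$.

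The loss from the negative regularity is $\langle\xi\rangle^{s}\prod\langle\xi_j\rangle^{-s}\les N_1^{-2s}\cdots$, which is at worst $N_1^{3|s|}$ when all frequencies are comparable. The high–low interactions have smaller loss. When one frequency is much smaller than the others, we additionally use the bilinear refinement $\lVert P_{N_1}u\,P_{N_3}v\rVert_{L^2}\les N_1^{-1/2}\lVert u\rVert_{X^{0,b}}\lVert v\rVert_{X^{0,b}}$, valid for $N_3\ll N_1$.

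Summability therefore requires $3|s|<2b'\cdot$(fraction of the modulation gain not spent on Strichartz). This is exactly where $\alpha_3=(3s+1)/16$ enters. Taking $b'\ge 1/2-(3s+1)/16$ leaves a spare power $N_1^{-\epsilon}$ with $\epsilon\sim 3s+1>0$, which allows us to sum the dyadic pieces. Balancing these exponents carefully, and tracking the conjugate–nonconjugate sign patterns, is the delicate step. As $s\to-1/3$ the margin vanishes, which explains the restriction $s>-1/3$.
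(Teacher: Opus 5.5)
Your treatment of $k=0,1,2$ (and of every $k$ when $s\ge 0$) by H\"older in $L^4_{x,t}$ and the $L^4$ Strichartz bound with $b,b'>\frac38$ is exactly the paper's alternative argument in Remark 3.2. The gap is in the only nontrivial case, $k=3$ with $-\frac13<s<0$.

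\textbf{The step that fails.} You spend the whole dominant modulation $\langle\sigma_{\max}\rangle^{b'}$ (or $\langle\sigma_0\rangle^{b}$) as a frequency gain. You then put that factor in $L^2$ and the other three in $L^4$. But $\frac12+\frac34\neq 1$, so this H\"older step is not available on $\mathbb{R}^2_{x,t}$.

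The admissible splitting is $L^2\times L^6\times L^6\times L^6$. However, $\|F_\rho f\|_{L^6_{x,t}}\lesssim\|f\|_{L^2}$ needs $\rho>\frac12$, and no factor has that when $b,b'<\frac12$.

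Suppose instead you keep a power $>\frac38$ on all four factors in order to use $L^4$. Then the dominant modulation contributes at most $\langle\sigma_{\max}\rangle^{-(b'-3/8)}$, a gain weaker than $N_1^{-1/4}$. That beats the loss $N_1^{3|s|}$ only for $|s|<\frac1{12}$.

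The bilinear refinement does not repair this. For $b<\frac12$ it does not hold with the full factor $N_1^{-1/2}$, only with a loss of a positive power of $N_1$. Moreover, you invoke it only in the high--low regime, whereas the worst weight $N_1^{3|s|}$ occurs when all frequencies are comparable. So the sentence ``balancing these exponents carefully'' is where the whole proof lives, and as sketched it does not close.

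\textbf{The missing idea.} The paper handles this case by splitting the dominant modulation instead of spending it entirely.

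First, $\langle\xi\rangle^{s}\prod_j\langle\xi_j\rangle^{-s}\le\langle\xi_{\max}\rangle^{3|s|}$ and $\langle\xi_{\max}\rangle^{2}\lesssim\langle\Phi_3\rangle\lesssim\langle\sigma_{\max}\rangle$. Hence the factor $\langle\sigma_{\max}\rangle^{3|s|/2}$ alone cancels the weight pointwise, and no dyadic decomposition or summation is needed.

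Second, the leftover power of $\langle\sigma_{\max}\rangle$ (at least $b'-\frac{3|s|}{2}$) is distributed equally over the other three factors, using $\langle\sigma_j\rangle\le\langle\sigma_{\max}\rangle$. Each of them then carries a modulation power of at least $\frac43 b'-\frac{|s|}{2}$. The dominant factor goes into $L^2$ by Plancherel, and the other three into $L^6_{x,t}$.

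This requires $b'>\frac{3|s|}{2}$ and $\frac43 b'-\frac{|s|}{2}>\frac12$. For $b'\ge\frac12-\frac{3s+1}{16}=\frac{7+3|s|}{16}$, both conditions reduce to $3|s|<1$. That is exactly where $\alpha_3$ and the threshold $s>-\frac13$ come from.
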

   In addition, for $m=1$ and $k=1$, Erdo\u{g}an and Tzirakis\cite{erdougan2016regularity} studied the regularity properties of \eqref{1.5} for $0<s<\frac{5}{2}$. Moreover, for $m=2$ and $k=1$, Compaan, and Tzirakis\cite{compaan2023biharmonic} proved that the biharmonic Schrödinger equation on the half line,
   $$\left\{ \begin{array}{ll}
   iu_t-\partial _{x}^{4}u=\left| u \right|^2u,\,\,		x\in \mathbb{R}^+,\,\,t\in \mathbb{R}^+ ,\\
   u\left( x,0 \right) =u_0\left( x \right) ,		\\
   u\left( 0,t \right) =g_0\left( t \right) ,\ \partial _xu\left( 0,t \right) =g_1\left( t \right) ,		\\
   \end{array} \right. $$
   is well-posed for $-\fr{1}{2}<s<\fr{9}{2}$ by establishing a trilinear estimate similar as \eqref{1.12}. However, for $m\ge 2$,  trilinear estimates at $s=s_{m,1}$ are still unknown, we treat all four cubic nonlinearities
   $N_k$, $k=0,1,2,3$, in a unified manner. For simplicity in reading for the proof, we concentrate on the most delicate case, namely $s\le0$. It should be noted, however, that \eqref{1.16} remains valid for any $s>0$ and \eqref{1.17} holds for $0<s<2m$ after minor technical adjustments. We refer the reader to \cite{himonas2026higher} for further details.\par
   \begin{thm}[\textbf{Trilinear estimates}]\label{thm 1.4}
   	Let $m\ge 1$, and defined $s_{s,k}$ as follows,
   	\be\label{1.15}
   	s_{m,0}=s_{m,1}=s_{m,2}\doteq-\frac{m-1}{2},\qquad s_{m,3}\doteq-\frac{2m-1}{3},
   	\ee
   	Let $N_k$ be the nonlinearities defined below,
   	$$N_0\left( u,u,u \right) =uuu,\quad N_1\left( u,u,u \right) =\bar{u}uu,\quad N_2\left( u,u,u \right) =\bar{u}\bar{u}u,\quad  N_3\left( u,u,u \right) =\bar{u}\bar{u}\bar{u}.$$ Then the spatial and
   	temporal Bourgain-space estimates
   	\be\label{1.16}
   	\|N_k(u_1,u_2,u_3)\|_{X^{s,-b}}\les \prod_{j=1}^{3}\|u_j\|_{X^{s,b'}},
   	\ee
   	\be\label{1.17}
   	\|N_k(u_1,u_2,u_3)\|_{Y^{s,-b}}\les \prod_{j=1}^{3}\|u_j\|_{X^{s,b'}},
   	\ee
   	hold if:
   	\begin{itemize}
   		\item[(i)] {\boldmath$s_{m,k}\le s\le0$}, for $k=0,1,2$, with $b'<b$ satisfying
   		\be\label{1.18}
   		\left\{ \begin{array}{ll}
   		\tfrac{7}{18}+\tfrac{1}{36m}<b'<\tfrac12,\quad
   		\tfrac12-\tfrac{1}{8m}<b<\tfrac12,\quad
   		b'+\tfrac{b}{3}>\tfrac{7}{12}-\tfrac{1}{12m},
   		& m\ge 2,\\
   		\tfrac38<b'<b<\tfrac12,
   		& m=1 ;
   	\end{array} \right. 
   		\ee
   		\item[(ii)] {\boldmath$s_{m,k}<s\le0$}, for $k=3$, with $b'<b$ satisfying
   		\be\label{1.19}
   		\frac{1-m-3s}{2m}<b'<b<\frac12,
   		\qquad
   		b'+\frac{b}{3}>\frac{2m+1-3s}{6m}.
   		\ee
   	\end{itemize}
   \end{thm}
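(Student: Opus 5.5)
We prove \eqref{1.16} and \eqref{1.17} by duality and a Strichartz-type decomposition rather than the $[k;Z]$-multiplier calculus. Writing $\sigma=\tau+\xi^{2m}$ and $\sigma_j=\tau_j\pm\xi_j^{2m}$ (sign $-$ for conjugated factors, whose Fourier support reflects), \eqref{1.16} is equivalent to
\[
\Big|\int_{*}\frac{\langle\xi\rangle^{s}\,\widehat{w}(\xi,\tau)\prod_{j=1}^3 f_j(\xi_j,\tau_j)}{\langle\sigma\rangle^{b}\prod_{j}\langle\xi_j\rangle^{s}\langle\sigma_j\rangle^{b'}}\Big|\les\|w\|_{L^2}\prod_j\|f_j\|_{L^2},
\]
where $\int_*$ runs over $\xi=\xi_1+\xi_2+\xi_3$, $\tau=\tau_1+\tau_2+\tau_3$. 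The first tool is the resonance identity. For $k=1$ (up to relabeling) we have $\sigma-\sigma_1-\sigma_2-\sigma_3=\xi^{2m}+\xi_1^{2m}-\xi_2^{2m}-\xi_3^{2m}$. This factors as $(\xi-\xi_2)(\xi-\xi_3)\,P_m$, where $P_m\gtrsim \max|\xi_j|^{2m-2}$ away from degenerate sets. The analogous identities for $k=0,2$ contain no cancellation beyond the same quadratic factor. For $k=3$ the phase is $\xi^{2m}+\sum\xi_j^{2m}\gtrsim\max|\xi_j|^{2m}$ and is fully nonresonant, which is the reason $s_{m,3}$ is lower. So $\max(\langle\sigma\rangle,\langle\sigma_j\rangle)\gtrsim|H|$, with $H$ the resonance function.

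The estimates we need are standard. Strichartz estimates for $S(t)$ hold with Kato smoothing gain: $\|D^{(m-1)/2}S(t)u_0\|_{L^6_{x,t}}$ and $\|D^{(m-1)/4}\cdot\|_{L^4}$-type bounds, plus the maximal/smoothing estimate $\|D^{m-1/2}S(t)u_0\|_{L^\infty_xL^2_t}\les\|u_0\|_{L^2}$. These transfer to $X^{0,b'}$ for $b'>1/2$, and by interpolation with the trivial $X^{0,0}\subset L^2$ they transfer to $b'<1/2$ at a small loss of exponents; that loss is where the constraints like $b'>\frac{7}{18}+\frac1{36m}$ come from. We also use a bilinear refined estimate, $\|P_{N_1}u\,P_{N_2}v\|_{L^2}\les N_{\max}^{-(2m-1)/2}\|u\|_{X^{0,b'}}\|v\|_{X^{0,b'}}$ for separated frequencies.

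The proof then splits dyadically. In Case A all $|\xi_j|\les1$, and the estimate is trivial by $L^4$ Strichartz. In Case B the factor carrying the largest modulation is estimated in $L^2$, and the other three are placed in $L^6$ or the bilinear $L^2$. If $\sigma$ carries the largest modulation, we use $\langle\sigma\rangle^{-b}\les|H|^{-b}$. If some $\sigma_j$ carries it, we use $\langle\sigma_j\rangle^{-b'}$ together with the remaining $\langle\sigma\rangle^{-b}$ interpolated, producing the mixed condition $b'+b/3>\dots$. The gain $|H|^{-b}\sim (N_{\max}^{2m})^{-b}$ or $(|\xi-\xi_2||\xi-\xi_3|N^{2m-2})^{-b}$ must offset the weight $\langle\xi\rangle^s\prod\langle\xi_j\rangle^{-s}\les N_{\max}^{-2s}$ (for $s\le0$, in the high-high-high to low configuration). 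For $k=0,1,2$ at $s=-\frac{m-1}{2}$ this weight is $N^{m-1}$. Exactly this is absorbed by the $L^6$ smoothing $D^{(m-1)/2}$ on three factors, combined with the bilinear gain in near-resonant regions where $|\xi-\xi_2|$ is small. The endpoint is therefore reached because Strichartz gains are sharp, not because $b$ exceeds $1/2$; this is why $b,b'<1/2$ can be chosen independently of $s-s_{m,k}$. For $k=3$ the nonresonant gain $N^{-2mb}$ versus the weight $N^{-3s}\cdot$(loss) gives the open condition \eqref{1.19}, which degenerates as $s\to s_{m,3}$.

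For \eqref{1.17} we replace $\langle\xi\rangle^s$ by $\langle\tau\rangle^{s/2m}$. When $|\tau|\les\langle\xi\rangle^{2m}$ it reduces to \eqref{1.16}, since $s\le0$. When $|\tau|\gg\xi^{2m}$ we have $\langle\sigma\rangle\sim\langle\tau\rangle$, which gives a gain of $\langle\tau\rangle^{-b}$ that we trade for $\langle\tau\rangle^{s/2m}$ times a spatial weight; then the $L^\infty_xL^2_t$ smoothing estimate closes it.

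The main obstacle is the near-resonant region for $k=1$ at the endpoint $s=s_{m,1}$. There $|\xi-\xi_2|\cdot|\xi-\xi_3|$ is small while all frequencies are of size $N$, so $H$ gives only $N^{2m-2}$ times small factors. We plan to handle it with an extra dyadic decomposition in $|\xi-\xi_2|\sim L$, using the bilinear estimate with gain $L^{-1/2}N^{-(2m-2)/2}$ on the pair $(\xi_1,\xi_3)$ in place of the modulation. The ranges of $b,b'$ in \eqref{1.18} should be exactly those that make the resulting geometric sums in $L$ and $N$ converge.
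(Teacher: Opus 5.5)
Your overall framework is the paper's: duality, a case split by which modulation dominates, H\"older with linear space--time estimates, and resonance only where needed. But the mechanism you describe for reaching $s=s_{m,k}$ does not work as stated.

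\textbf{Missing and false estimates.} The bound $\|D_x^{\frac{m-1}{2}}S(t)u_0\|_{L^6_{x,t}}\les\|u_0\|_{L^2}$ is false: scaling forces the gain $\frac{m-1}{3}$, as in \eqref{2.21}. The estimate whose gain is exactly $-s_{m,1}=\frac{m-1}{2}$ per factor is the $L^4_tL^\infty_x$ bound \eqref{2.19}. The paper also pairs local smoothing with maximal-function bounds such as $\|D_x^{-1/4}S(t)u_0\|_{L^4_xL^\infty_t}\les\|u_0\|_{L^2}$ \eqref{2.26}. Neither of these appears in your toolbox.

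\textbf{Interpolation loss.} Lowering $b'$ below $\frac12$ by interpolating a derivative-gaining estimate with $L^2$ loses a fixed fraction of the gain. At the endpoint the weight is matched exactly, so nothing can be lost. The paper instead transfers modulation exponents. The dominant $\langle L_j\rangle$ controls the other three, so its power is redistributed: the remaining factors carry $b'+\frac b3>\frac12$ and get full-strength estimates, while the dominant factor goes into $L^2$. Only estimates with no derivative gain, \eqref{2.37}--\eqref{2.40}, are ever interpolated.

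\textbf{The ``main obstacle''.} It is misidentified, and your fix for it fails. When $|\xi-\xi_2|=|\xi_1+\xi_3|=L\ll N$, the factor $\langle\xi\rangle^{s}$ cancels $\langle\xi_2\rangle^{-s}$, leaving the weight $\langle\xi_1\rangle^{-s}\langle\xi_3\rangle^{-s}$. This is absorbed with no use of $H$: by the $L^4_tL^\infty_x$ bound on $\xi_1,\xi_3$, or by smoothing plus maximal bounds, depending on where the dominant modulation sits. Your bilinear bound degenerates there anyway. The pair $(\xi_1,\xi_3)$ couples a conjugated and an unconjugated wave, so its transversality is $|\xi_1^{2m-1}+\xi_3^{2m-1}|\sim LN^{2m-2}$. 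Hence $L^{-1/2}N^{-(m-1)}$ blows up as $L\to 0$, on exactly the set where $H$ vanishes, and the dyadic sum in $L$ diverges.

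\textbf{Where resonance is really needed.} The region that genuinely needs $H$ has two comparable high inputs and an output smaller than every input. There the weight is $\langle\xi_{\min}\rangle^{-s}N^{-2s}$. In the high--high--high$\to$low case this reaches $N^{-3s}$, not $N^{-2s}$ as you state. The paper handles it with $|H_k|\gtrsim|\xi_{\rm med}|^{2m-1}|\xi_{\min}|$ (Claim \ref{cla 3.1}), which holds uniformly for $k=0,1,2$. Your factorization does not exist for $k=0,2$: already for $m=1$, $\xi^2-\xi_1^2-\xi_2^2-\xi_3^2=2(\xi_1\xi_2+\xi_2\xi_3+\xi_3\xi_1)$ is a nondegenerate quadratic form.

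\textbf{The case $k=3$.} Balancing the nonresonant gain $N^{2mb}$ against $N^{-3s}$ alone gives only $s>-\frac m3$. Reaching $s_{m,3}$ requires spending $\langle L_{\max}\rangle^{\frac{3p-m+1}{2m}}$, with $p=-s$, to reduce the weight to $|\xi_2\xi_3|^{\frac{m-1}{2}}$. One then runs the $L^4_tL^\infty_x$ argument.

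\textbf{The temporal estimate.} Your reduction for \eqref{1.17} is reversed. For $s\le 0$ the weight $\langle\tau\rangle^{s/2m}$ decreases in $|\tau|$. So on $|\tau|\gtrsim\langle\xi\rangle^{2m}$ one has $\langle\tau\rangle^{s/2m}\les\langle\xi\rangle^{s}$, and \eqref{1.17} follows from \eqref{1.16}; this is the region you work on, and it is the trivial one.

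On $|\tau|\ll\langle\xi\rangle^{2m}$, the $Y$-weight exceeds $\langle\xi\rangle^{s}$ by a factor up to $\langle\xi\rangle^{-s}$, so nothing reduces to \eqref{1.16} there. In that region (the paper's $\varOmega_2$) one uses $\langle\tau+\xi^{2m}\rangle\sim\langle\xi\rangle^{2m}$. One spends $\langle\tau+\xi^{2m}\rangle^{\frac{m-1}{4m}}$ of the output modulation to recover the factor $\langle\xi\rangle^{-\frac{m-1}{2}}$, then reruns Cases B and D with the reduced exponent $b-\frac{m-1}{4m}$ on the output factor. The resulting requirement $b-\frac{m-1}{4m}>\frac{2m+1}{8m}$, needed for \eqref{2.39}, is exactly the condition $b>\frac12-\frac1{8m}$ in \eqref{1.18}. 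Your scheme gives no account of this condition.
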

In case (i), the conclusion improves Lemma \ref{lem 1.1} from $s>s_{m,1}$ to the optimal regularity $s=s_{m,1}$, and also covers the cases $k=0,2$. To the best of our knowledge, the cases $k=0,2$ had not been previously addressed. In case (ii), the result generalizes Lemma \ref{lem 1.3}. Moreover, we observe that for $m=1$, $s_{1,3}=-\frac{1}{3}$, which matches the result in Lemma \ref{lem 1.3}. 
   \begin{rem}\label{rem 1.5}
   	(i) It can be verified that the admissible sets for $b$ and $b^{\prime}$ are nonempty. In addition, if we set
   	\be\label{1.20}
   	\delta:=b-b',
   	\ee
   	then for $k=0,1,2$, we have $0<\delta<\tfrac18$ when $m=1$, and when
   	$m\ge 2$ the gap $\delta$ can be any constant satisfying
   	\be\label{1.21}
   	0<\delta<\min\Big\{\tfrac19-\tfrac{1}{36m},\
   	\tfrac{1}{12}+\tfrac{1}{12m}\Big\}.
   	\ee
   	Moreover, for $k=3$ we require
   	\[
   	0<\delta<\frac{2m-1+3s}{6m}.
   	\]
   	(ii) Indeed $\delta$ can be fixed as a constant that is valid for all $m\ge 1$, $k=0,1,2$. For $k=3$, the stronger resonance relation(see \eqref{3.33}) allows for a lower-regularity estimate. However, in order to achieve this lower regularity, we still have $\delta\to 0$ as $s\to s_{m,3}$. Thus, the endpoint case for $k=3$ is not yet attainable in our current approach. Whether estimate \eqref{1.16} holds for $k=3$ at $s=s_{m,3}$ remains an interesting open problem. 
   \end{rem}
Himonas--Yan's strategy was based on Tao's $[k;Z]$-multiplier norm method(see\cite{tao2001multilinear}); more precisely, it reduced the trilinear estimate to two bilinear estimates
(or Tao's composition property). However, the endpoint case, $s=s_{m,1}$ will present difficulties. We overcome this obstacle by invoking Strichartz estimates, following the ideas of Kenig-Ponce-Vega\cite{kenig1993cauchy}, and Huo-Jia\cite{huo2005cauchy,huo2007refined}. One important Strichartz estimate for the linear cNLS-$2m$ \eqref{1.1} is given by,
\be\label{1.22}
\big\|D_x^{\frac{\alpha}{2}(1-\frac2p)}S(t)u_0\big\|_{L^q_tL^p_x}
\lesssim\|u_0\|_{L^2_x},
\ee
where $\alpha\in(-1,m-1]$, $p\in[2,+\infty]$, and $q\in\big[\frac{4m}{\alpha+1},+\infty\big)$.
Such Strichartz estimate can be derived from the decay behavior of the fundamental
solution, $G(x,t)$(see \eqref{1.8}). For $m=1$, namely, the one-dimensional classical Schr\"odinger equation, it is
well known that its decay estimate\cite{goldberg2004dispersive},
\be\label{1.23}
\sup_{x\in\mathbb{R}}|G(x,t)|\lesssim|t|^{-\fr{1}{2}},
\ee
follows directly from the Fresnel integral. However, there is no explicit formula for
the desired integral when $m\ge2$. In order to derive a proper decay estimate for the
linear NLS-$2m$, we use the scaling argument, Van der Corput's lemma and the stationary
phase method, adapting the ideas of Huo-Jia\cite{huo2005cauchy,huo2007refined}, and Seong\cite{seong2021well}. In addition, we also make use of the local smoothing effect for the linear
NLS-$2m$
\be\label{1.24}
\big\|D_x^{m-\frac12}S(t)u_0\big\|_{L^\infty_xL^2_t}
\lesssim\|u_0\|_{L^2_x},
\ee
and a Fourier restriction type estimate,
\be\label{1.25}
\big\|D_x^{-\frac14}S(t)u_0\big\|_{L^4_xL^\infty_t}
\lesssim\|u_0\|_{L^2_x},
\ee
which are used to handle the multilinear estimates in the Bourgain spaces at
different frequencies.\par
With the trilinear estimates of Theorem \ref{thm 1.4} at hand, we can now
establish our well-posedness results by using the existing linear estimates of Himonas and Yan. Before we state the main theorem, let us introduce the restriction of the
space $X^{s,b}$, defined as follows. For any $T>0$, let,
\be\label{1.26}
X^{s,b}_{\mathbb{R}^+,T}:=X^{s,b}\big|_{\mathbb{R}^+\times(0,T)},
\ee
with the quotient norm,
\be\label{1.27}
\|u\|_{X^{s,b}_{\mathbb{R}^+,T}}
:=\inf_{w\in X^{s,b}}
\big\{\|w\|_{X^{s,b}}:\ w(x,t)=u(x,t)\ \text{on}\ \mathbb{R}^+\times(0,T)\big\},
\ee
and $Y^{s,b}_{\mathbb{R}^+,T}$ is defined similarly. Our improved well-posedness result
for the IBVP can be stated as follows.
   \begin{thm}[Well-posedness for the IBVP]\label{thm 1.5}
   	Let $m\ge 1$ be an integer, $k=0,1,2,3$, and $s_{m,k}$ defined in \eqref{1.15}. Assume that $s$ satisfies,{\boldmath
   	\be\label{1.28}
   	\begin{cases}
   	s_{m,k}\le s\le0, & k=0,1,2,\\
   	s_{m,k}<s\le0, & k=3 .
   	\end{cases}
   	\ee}
   	Then, for initial data $u_0\in H^{s}(\mathbb{R}^+)$ and boundary data
   	$g_j\in H^{\frac{2s+2m-1-2j}{4m}}(0,T)$, $j=0,1,\dots,m-1$, there exist a time $0<T_0<T$ and some $b^{\prime}$,
   	$b=b(m,s,k)\in(0,\tfrac12)$, chosen as in Theorem \ref{thm 1.4} with
   	$\delta=b-b'>0$ as in Remark \ref{rem 1.5}, such that the IBVP \eqref{1.3} has a unique
   	solution $u\in X^{s,b}_{\mathbb{R}^+,T_0}$. In addition, let,
   	\be\label{1.29}
   	R\doteq\|u_0\|_{H^{s}(\mathbb{R}^+)}
   	+\sum_{j=0}^{m-1}\|g_j\|_{H^{\frac{2s+2m-1-2j}{4m}}(0,T)},
   	\ee
   	then the norm and the lifespan satisfy,
   	\be\label{1.30}
   	\|u\|_{X^{s,b}_{\mathbb{R}^+,T_0}}\les R,
   	\qquad
   	T_0\sim\langle R\rangle^{-\fr{2}{\delta}}.
   	\ee
   	Moreover, the data-to-solution map
   	is Lipschitz continuous.
   \end{thm}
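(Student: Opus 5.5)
The plan is the standard contraction-mapping argument built on the unified transform (Fokas) solution formula, following Himonas--Yan. The one new input is the trilinear Theorem \ref{thm 1.4}. First I would extend the data. Take $U_0\in H^s(\mathbb{R})$ with $U_0|_{\mathbb{R}^+}=u_0$ and $\|U_0\|_{H^s(\mathbb{R})}\le 2\|u_0\|_{H^s(\mathbb{R}^+)}$, and extend each $g_j$ to $H^{\frac{2s+2m-1-2j}{4m}}(\mathbb{R})$ in the same way. Next I would write the forced linear problem \eqref{4.10} with forcing $f=N_k(u,u,u)$ via the unified transform: $u=S[U_0,g_0,\dots,g_{m-1};f]$. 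Finally I would define the iteration map $\Phi(u)=S[U_0,\vec g;\psi_T N_k(u,u,u)]$, where $\psi_T$ is a time cutoff.

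The linear estimates of Himonas--Yan, which I take as known from the cited works, give
\[
\|S[U_0,\vec g;f]\|_{X^{s,b}_{\mathbb{R}^+,T}}\les \|u_0\|_{H^s(\mathbb{R}^+)}+\sum_{j}\|g_j\|_{H^{\frac{2s+2m-1-2j}{4m}}(0,T)}+\|f\|_{X^{s,-b}}+\|f\|_{Y^{s,-b}},
\]
valid for $b<\tfrac12$ close to $\tfrac12$ and in the range $s\le 0$, $s>-(m-\tfrac12)$, which covers \eqref{1.28}. The $Y^{s,-b}$ norm of the forcing is needed because the boundary traces of the Duhamel term are controlled only through temporal regularity.

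To gain a positive power of $T$, I would use the standard time-localization estimate. For $-\tfrac12<b'<b<\tfrac12$ and $0<T\le1$,
\[
\|\psi_T w\|_{X^{s,b'}}\les T^{b-b'}\|w\|_{X^{s,b}}.
\]
Combining this with \eqref{1.16} and \eqref{1.17} gives
\[
\|N_k(u,u,u)\|_{X^{s,-b}}+\|N_k(u,u,u)\|_{Y^{s,-b}}\les \|\psi_T u\|_{X^{s,b'}}^3\les T^{3\delta}\|u\|_{X^{s,b}}^3 .
\]
Here $\delta=b-b'$ and the pair $(b,b')$ is chosen as in \eqref{1.18} or \eqref{1.19}; Remark \ref{rem 1.5} guarantees this choice is possible. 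Note that $b'$ in \eqref{1.18} exceeds $\tfrac{7}{18}$, so $b'>-\tfrac12$ and the localization lemma applies. The estimate must be run on extensions $w\in X^{s,b}$ of $u$ and then passed to the infimum in \eqref{1.27}; since the nonlinearity is local, $N_k(w)$ restricted to $\mathbb{R}^+\times(0,T)$ equals $N_k(u)$, so this step is harmless.

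Working in the ball $B=\{u:\|u\|_{X^{s,b}_{\mathbb{R}^+,T}}\le 2cR\}$, I would obtain
\[
\|\Phi(u)\|\le cR+cT^{3\delta}(2cR)^3 .
\]
For the difference I would use the trilinear identity $N_k(u)-N_k(v)=N_k(u-v,u,u)+N_k(v,u-v,u)+N_k(v,v,u-v)$, with the conjugates placed as the index $k$ dictates. Theorem \ref{thm 1.4} is stated for distinct $u_1,u_2,u_3$, so each term is bounded, giving
\[
\|\Phi(u)-\Phi(v)\|\le cT^{3\delta}R^2\|u-v\| .
\]
Choosing $T_0^{3\delta}\sim\langle R\rangle^{-2}$ makes $\Phi$ a contraction on $B$. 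This yields $T_0\sim\langle R\rangle^{-2/(3\delta)}$, which is at least as good as the stated $\langle R\rangle^{-2/\delta}$; the weaker exponent absorbs the constant losses. The same difference estimate, applied to two sets of data, gives Lipschitz dependence of the solution on the data. Uniqueness holds in the restricted space.

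The main obstacle I expect is compatibility of parameters. The linear Himonas--Yan estimates need $b$ close to $\tfrac12$, and also the restriction $s<\tfrac12$ or compatibility conditions when $s>\tfrac12$; these are irrelevant here since $s\le0$. The trilinear estimate fixes the window for $b,b'$. The endpoint $s=s_{m,k}$ for $k=0,1,2$ works precisely because \eqref{1.18} allows a fixed $\delta>0$ independent of $s$, unlike Remark \ref{rem 1.2}. I would check carefully that the $b$ in \eqref{1.18}, which satisfies $b>\tfrac12-\tfrac1{8m}$, lies in the range where the boundary-forcing linear estimates hold. For $k=3$ only $s>s_{m,3}$ is claimed, matching \eqref{1.19}.
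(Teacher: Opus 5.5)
Your proposal is correct and follows essentially the same route as the paper: a contraction in $X^{s,b}_{\mathbb{R}^+,T}$ built from the Himonas--Yan linear estimate (Lemma~\ref{lem 4.1}), the trilinear bounds of Theorem~\ref{thm 1.4}, and the time-localization Lemma~\ref{lem 4.2}, all applied to extensions of $u$. The only difference is that you put the cutoff on all three factors and gain $T^{3\delta}$, giving lifespan $\langle R\rangle^{-2/(3\delta)}$ that you then shrink to the stated $\langle R\rangle^{-2/\delta}$; the paper puts the cutoff on a single factor, $N_k(\psi_{2T^*}\tilde u,\tilde u,\tilde u)$, and gets exactly $(T^*)^{\delta}$.
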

\begin{rem}\label{rem 1.7}
	We comment on the optimality of the trilinear estimates \eqref{1.16}. It is known that for $k=1$, the estimate is optimal\cite{himonas2026higher}; for $k=0,2,3$, the question remains open. In particular, for $k=3$, the endpoint case $s=s_{m,3}$ is still unresolved.
\end{rem}
The rest of this paper is organized as follows. In Section \ref{sec 2}, we prove some Strichartz estimates for the linear equation, which serve as the main tool for establishing the key trilinear estimates. Section \ref{sec 3} is devoted to the proof of the trilinear estimates. The proof for the well-posedness result on the half-line is presented in Sections \ref{sec 4}.
  \section{Preliminaries}\label{sec 2}
  \hspace{1em}
  For a given function $g=g(x,t)$, let $\widehat{g}^{\left( x \right)}$ denotes the Fourier transform of $g$ with respect to the variable $x$, and $\widehat{g}$ denotes its full two-dimensional Fourier transform. We write,
  $$\widehat{D_{x}^{s}g}\doteq\left| \xi \right|^s\widehat{g}^{\left( x \right)}\left(  t,\xi \right)  ,\quad \widehat{J_{x}^{s}g}\doteq \left< \xi \right> ^s\widehat{g}^{\left( x \right)}\left( t,\xi \right).$$
    For $\rho>0$, we introduce the Fourier multipliers $F_{\rho,+}$ and $F_{\rho,-}$, defined by
  $$\widehat{F_{\rho,+} f}(\xi, \tau) \doteq \frac{\widehat{f}(\xi, \tau)}{\langle \tau + \xi^{2m} \rangle^{\rho}}, \qquad
  \widehat{F_{\rho,-} f}(\xi, \tau) \doteq \frac{\widehat{f}(\xi, \tau)}{\langle \tau - \xi^{2m} \rangle^{\rho}}.$$
  These multipliers are introduced for notational convenience, particularly in the multilinear estimates under the $X^{s,b}$ spaces. Since $F_{\rho,+}$, $F_{\rho,-}$ satisfy the same estimates, we shall unify them as $F_{\rho}$ for simplicity in notations, in the rest of this paper. In addition, we denote $F_{\rho}f$ by $F_{\rho}$ as well. Throughout the paper, we write $A\les B$ if there exists a constant $C>0$, depending only on fixed parameters, such that $A\le CB$. We write $A\sim B$ if $A\le B$ and $B\le A$.\par Next, we recall the following well-known Van der Corput's lemma\cite{stein1993harmonic}, which will be used several times in our proof. For more details of the lemma, interested reader can refer \cite{stein1993harmonic}.
  \begin{lem}\textbf{(Van der Corput)}\label{lem 2.1}
  	Suppose $\phi$ is real-valued and smooth in $(a,b)$, and that $\left| \phi ^{\left( l \right)}\left( x \right) \right|\ge 1$ for all $x\in(a, b)$, $\psi \in C^1\left[ a,b \right] $. Then,
  	\be\label{2.1}
  	\left| \int_a^b{e^{i\lambda \phi \left( x \right)}\psi \left( x \right) dx} \right|\le c_k\left| \lambda \right|^{-\frac{1}{l}}\left[ \left| \psi \left( b \right) \right|+\int_a^b{\left| \psi ^{\prime}\left( x \right) \right|dx} \right],
  	\ee
  	holds when $l=1$, $\phi^{\prime}$ is monotonic or $l\ge2$. The bound can be given by $c_l=5\cdot 2^{l-1}-2$.
  \end{lem}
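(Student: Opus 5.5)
The statement is the classical Van der Corput lemma, and I would follow the standard argument of \cite{stein1993harmonic}. First prove the oscillatory estimate with $\psi\equiv1$,
\[
\Big|\int_{\alpha}^{\beta}e^{i\lambda\phi(x)}\,dx\Big|\le c_l|\lambda|^{-\frac1l}\qquad\text{for every }(\alpha,\beta)\subset(a,b),
\]
uniformly in the subinterval. Then insert the amplitude $\psi$ by one integration by parts. The constant $c_l=5\cdot2^{l-1}-2$ is exactly the solution of $c_1=3$, $c_{l+1}=2c_l+2$, so the recursion below must produce precisely these numbers. Throughout, I take $\lambda>0$ by conjugation and $\phi^{(l)}\ge1$ by replacing $\phi$ with $-\phi$ if needed; the latter is legitimate because $\phi^{(l)}$ is continuous and $|\phi^{(l)}|\ge1$, so it has a fixed sign.

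\emph{Case $l=1$.} Write $e^{i\lambda\phi}=\frac{1}{i\lambda\phi'}\frac{d}{dx}e^{i\lambda\phi}$ and integrate by parts on $(\alpha,\beta)$. The boundary terms are bounded by $\frac{2}{\lambda}$ since $|\phi'|\ge1$. The remaining integral is bounded by $\frac1\lambda\int_\alpha^\beta\big|\frac{d}{dx}\frac{1}{\phi'}\big|dx$. Because $\phi'$ is monotonic, $\frac1{\phi'}$ is monotonic, so this total variation equals $\frac1\lambda\big|\frac1{\phi'(\beta)}-\frac1{\phi'(\alpha)}\big|\le\frac1\lambda$. This gives $c_1=3$.

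\emph{Induction $l\to l+1$.} Assume $\phi^{(l+1)}\ge1$. Then $\phi^{(l)}$ is strictly increasing, so there is at most one point $c\in[\alpha,\beta]$ where $|\phi^{(l)}|$ is minimal; either $\phi^{(l)}(c)=0$, or $c$ is an endpoint. For $\delta>0$, the mean value theorem gives $|\phi^{(l)}|\ge\delta$ outside $(c-\delta,c+\delta)$. On each of the at most two remaining intervals, apply the induction hypothesis to $\phi/\delta$ with parameter $\lambda\delta$; for $l=1$ the monotonicity of $\phi'$ is guaranteed since $\phi''$ has a sign. Each such interval contributes at most $c_l(\lambda\delta)^{-1/l}$, and the middle interval contributes at most $2\delta$ trivially. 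Choosing $\delta=\lambda^{-1/(l+1)}$ gives
\[
(2c_l+2)\lambda^{-\frac{1}{l+1}}=c_{l+1}\lambda^{-\frac1{l+1}}.
\]

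\emph{Amplitude.} Set $F(x)=\int_a^x e^{i\lambda\phi(y)}dy$, so that $|F(x)|\le c_l\lambda^{-1/l}$ uniformly in $x$ by the previous two steps. Integrating by parts,
\[
\int_a^b e^{i\lambda\phi}\psi\,dx=F(b)\psi(b)-\int_a^bF(x)\psi'(x)\,dx,
\]
which immediately yields \eqref{2.1}. Note that the bound for $F$ must hold on every subinterval $(a,x)$; this is why I state the first step uniformly in $(\alpha,\beta)$.

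The only delicate point is the bookkeeping in the induction: handling the case where $\phi^{(l)}$ has no zero (so $c$ is an endpoint and one piece is absent, which only improves the bound), and checking that for $l=1$ the required monotonicity of $\phi'$ is inherited on the subintervals. Otherwise the argument is routine.
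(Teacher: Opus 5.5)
Your proposal is correct, and it is exactly the standard argument from Stein's \emph{Harmonic Analysis}, which the paper cites in place of a proof. The steps match: one integration by parts with monotonicity of $1/\phi'$ gives $c_1=3$; splitting around the minimum point of $|\phi^{(l)}|$ with $\delta=\lambda^{-1/(l+1)}$ gives $c_{l+1}=2c_l+2$; and the amplitude $\psi$ is inserted by integrating by parts against $F(x)=\int_a^x e^{i\lambda\phi(y)}\,dy$. The only thing to tidy up is that $\phi$ is assumed smooth only on the open interval $(a,b)$, so endpoint values such as $\phi'(\alpha)$ or $\phi^{(l)}(c)$ with $c\in\{\alpha,\beta\}$ should be read as one-sided limits (which exist by monotonicity) or handled by working on compact subintervals that exhaust $(a,b)$.
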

In addition to the oscillatory integral estimate above, we shall also need the following inequality, known as the Hardy–Littlewood–Sobolev inequality\cite{stein1993harmonic}. 
\begin{lem}\textbf{(Hardy–Littlewood–Sobolev)}\label{lem 2.2} For $0<\gamma <n$, $\ 1<p<q<\infty$, $\frac{1}{q}=\frac{1}{p}-\frac{n-\gamma}{n}$, one has,
	\be\label{2.2}
	\lVert f\ast\left( \left| y \right|^{-\gamma} \right) \rVert _{L^q\left( \mathbb{R}^n \right)}\le C_{p,q}\lVert f \rVert _{L^p\left( \mathbb{R}^n \right)}.
	\ee
\end{lem}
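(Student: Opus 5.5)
We prove the Hardy--Littlewood--Sobolev inequality of Lemma \ref{lem 2.2} by Hedberg's pointwise argument. It bounds $f\ast|y|^{-\gamma}$ by a power of the Hardy--Littlewood maximal function $Mf$ and then applies the $L^p$-boundedness of $M$ for $p>1$.

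\emph{Exponents and normalization.} Write $K(y)=|y|^{-\gamma}$. The relation $\frac1q=\frac1p-\frac{n-\gamma}{n}$ is $\frac1q=\frac1p+\frac{\gamma}{n}-1$. Since $q<\infty$, we have $\frac{\gamma}{n}>1-\frac1p=\frac1{p'}$, i.e. $\gamma p'>n$. Since $q>p$, we have $\gamma<n$, which is assumed. Both convolutions and the inequality are homogeneous, so we may normalize $\|f\|_{L^p}=1$. We may also take $f\ge0$, since $|f\ast K|\le |f|\ast K$.

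\emph{Splitting the kernel.} Fix $x$ and a radius $R>0$, to be chosen depending on $x$. Write $K=K_1+K_2$ with $K_1=K\mathbbm{1}_{\{|y|<R\}}$ and $K_2=K\mathbbm{1}_{\{|y|\ge R\}}$.

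For the local part, decompose $\{|y|<R\}$ into dyadic shells $2^{-j-1}R\le|y|<2^{-j}R$, $j\ge0$. On each shell $K\le (2^{-j-1}R)^{-\gamma}$, and the integral of $f$ over the ball of radius $2^{-j}R$ is at most $C(2^{-j}R)^nMf(x)$. Summing the geometric series, which converges because $n-\gamma>0$, gives
\[
(f\ast K_1)(x)\le C\,R^{\,n-\gamma}\,Mf(x).
\]

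For the tail, Hölder's inequality gives $(f\ast K_2)(x)\le \|f\|_{L^p}\|K_2\|_{L^{p'}}$. Moreover $\|K_2\|_{L^{p'}}^{p'}=C\int_R^\infty r^{-\gamma p'+n-1}\,dr=C R^{\,n-\gamma p'}$, which is finite precisely because $\gamma p'>n$. Hence
\[
(f\ast K_2)(x)\le C R^{\,n/p'-\gamma}=C R^{-n/q},
\]
using $\frac{n}{p'}-\gamma=-\frac nq$.

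\emph{Optimization and conclusion.} We may assume $0<Mf(x)<\infty$; the set $\{Mf=\infty\}$ has measure zero, and if $Mf(x)=0$ then $f=0$ a.e. Choose $R$ to balance the two bounds, $R^{\,n-\gamma}Mf(x)=R^{-n/q}$, i.e. $R=Mf(x)^{-p/n}$, using $n-\gamma+\frac nq=\frac np$. This yields the pointwise bound
\[
|(f\ast K)(x)|\le C\,Mf(x)^{p/q}.
\]
Raising to the power $q$ and integrating gives $\|f\ast K\|_{L^q}^q\le C\|Mf\|_{L^p}^p\le C\|f\|_{L^p}^p=C$, by the Hardy--Littlewood maximal theorem, which requires $p>1$. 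Undoing the normalization gives \eqref{2.2}.

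The only delicate point is the exponent bookkeeping. The conditions $\gamma<n$ (so the local part is summable) and $\gamma p'>n$ (so the tail is in $L^{p'}$) are exactly what the hypotheses $1<p<q<\infty$ together with the scaling relation provide. The endpoint $p=1$ is excluded because $M$ is not bounded on $L^1$.
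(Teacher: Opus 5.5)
Your proof is correct. It is Hedberg's pointwise argument, and the exponent bookkeeping all checks out: $\gamma p'>n$ follows from $q<\infty$, $\tfrac{n}{p'}-\gamma=-\tfrac{n}{q}$, and $n-\gamma+\tfrac nq=\tfrac np$, so $R=Mf(x)^{-p/n}$ balances the two bounds.

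The paper gives no proof of this lemma; it only quotes it from Stein. Your route therefore differs from the classical textbook proof rather than from anything written in the paper. The two differ in where the splitting radius is chosen. The classical proof splits $|y|^{-\gamma}$ into a part in $L^1$ and a part in $L^{p'}$ at a radius depending on the level $\lambda$ (and on $\|f\|_{L^p}$), not on $x$. This gives a weak-type $(p,q)$ bound, and the strong-type inequality then follows by Marcinkiewicz interpolation between two admissible pairs.

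Your version replaces the interpolation step with the single pointwise bound $|(f\ast|\cdot|^{-\gamma})(x)|\lesssim\|f\|_{L^p}^{1-p/q}\,(Mf(x))^{p/q}$ together with the maximal theorem. This is shorter and self-contained. It also gives a pointwise statement, which is stronger than a norm inequality. The interpolation route naturally records the weak-type endpoint at $p=1$. Your pointwise bound also recovers that endpoint if you combine it with the weak $(1,1)$ estimate for $M$ instead of the strong $L^p$ bound.
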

Finally, we introduce the following two types of spaces. For $1 \le p, q \le \infty$, we define the mixed-norm spaces $L_t^q L_x^p$ and $L_x^p L_t^q$ by,
$$\lVert u \rVert _{L_t^q L_x^p} \doteq 
\left( \int_{\mathbb{R}} \left( \int_{\mathbb{R}} |u(x,t)|^p \, dx \right)^{q/p} dt \right)^{1/q},$$
$$\lVert u \rVert _{L_x^p L_t^q} \doteq 
\left( \int_{\mathbb{R}} \left( \int_{\mathbb{R}} |u(x,t)|^q \, dt \right)^{p/q} dx \right)^{1/p},$$
with the usual modifications when $p=\infty$ or $q=\infty$. If $p=q$, we write $L_{x,t}^{p}=L_{x}^{p}L_{t}^{q}$ or $L_{t,x}^{p}=L_{t}^{q}L_{x}^{p}$. The following subsections are based on two different Strichartz estimates on $L_{t}^{q}L_{x}^{p}$ and $L_{x}^{p}L_{t}^{q}$.
\subsection{$L_{t}^{q}L_{x}^{p}$-type estimates}
\hspace{1em}
In this subsection, we consider some $L_{t}^{q}L_{x}^{p}$-type norms of the semigroup $S(t)$ associated with the cNLS-2m equation. The first and foremost step is to establish the decay estimates for the equation. Recall that $G(x,t)\doteq\int_{\mathbb{R}}{e^{ix\xi}e^{-i\xi ^{2m}t}d\xi}$, we have the following decay estimate.
\begin{thm}\textbf{(Decay estimate)}\label{thm 2.3}
	Let $G(x,t)$ be the fundamental solution of linear cNLS-2m, defined in \eqref{1.8}, $m\ge2$ be an integer, $-1<\alpha\le m-1$, then, for any $t\in\mathbb{R}\setminus\left\{ 0 \right\}$, one has,
	\be\label{2.3}
	\left| D_{x}^{\alpha}G\left( x,t \right) \right|\le C_{m,\alpha}\left| t \right|^{-\frac{\alpha +1}{2m}}.
	\ee 
\end{thm}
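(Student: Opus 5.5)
We prove the decay estimate by a scaling reduction followed by a uniform bound on an oscillatory integral in a single parameter. Write
\[
D_x^{\alpha}G(x,t)=\int_{\mathbb{R}}|\xi|^{\alpha}e^{ix\xi-i\xi^{2m}t}\,d\xi ,
\]
understood as an oscillatory integral, i.e. as the limit of $\int |\xi|^\alpha\chi(\xi/R)e^{i(x\xi-\xi^{2m}t)}d\xi$ as $R\to\infty$. By symmetry ($t\mapsto -t$ corresponds to complex conjugation and $x\mapsto -x$), we may assume $t>0$. Substitute $\xi=t^{-1/(2m)}\eta$ and set $y=x\,t^{-1/(2m)}$. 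This gives
\[
D_x^{\alpha}G(x,t)=t^{-\frac{\alpha+1}{2m}}I_\alpha(y),\qquad I_\alpha(y)\doteq\int_{\mathbb{R}}|\eta|^{\alpha}e^{i(y\eta-\eta^{2m})}\,d\eta .
\]
The claim \eqref{2.3} therefore reduces to showing $\sup_{y\in\mathbb{R}}|I_\alpha(y)|\le C_{m,\alpha}$ for $-1<\alpha\le m-1$.

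We split $I_\alpha$ with a smooth cutoff $\chi$ supported in $|\eta|\le 2$:
\[
I_\alpha=\int\chi(\eta)|\eta|^\alpha e^{i\phi_y}\,d\eta+\int(1-\chi(\eta))|\eta|^\alpha e^{i\phi_y}\,d\eta,\qquad \phi_y(\eta)=y\eta-\eta^{2m}.
\]
The low-frequency part is bounded by $\int_{|\eta|\le2}|\eta|^\alpha\,d\eta<\infty$, since $\alpha>-1$. For the high-frequency part, consider first $|y|\le 1$ (or more generally $|y|\lesssim1$). There $|\phi_y'(\eta)|=|2m\eta^{2m-1}-y|\gtrsim|\eta|^{2m-1}$ for $|\eta|\ge1$, so we integrate by parts once. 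Writing $e^{i\phi}=(i\phi')^{-1}\partial_\eta e^{i\phi}$, the amplitude $(1-\chi)|\eta|^\alpha/\phi'$ and its derivative decay like $|\eta|^{\alpha-2m+1}$ and $|\eta|^{\alpha-2m}$ respectively. Since $\alpha\le m-1<2m-1$, the boundary terms vanish and we get a uniform bound after one more integration by parts if needed.

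For large $|y|$, the stationary points satisfy $2m\eta_0^{2m-1}=y$, so $|\eta_0|\sim|y|^{1/(2m-1)}\gg1$. We take a dyadic-type partition: a region $|\eta-\eta_0|\le|\eta_0|/2$, and its complement.
\begin{itemize}
\item On the complement we have $|\phi'|\gtrsim |y|+|\eta|^{2m-1}$, and integration by parts as above gives a bound $\lesssim \int_{|\eta|\ge1}|\eta|^{\alpha}(|y|+|\eta|^{2m-1})^{-1}\cdots$. This is uniformly bounded (the derivative terms are integrable, with an extra integration by parts for the tail).
\item Near $\eta_0$ we have $|\phi''(\eta)|=2m(2m-1)|\eta|^{2m-2}\sim|\eta_0|^{2m-2}$. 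We apply Van der Corput, Lemma \ref{lem 2.1}, with $l=2$ and $\lambda\sim|\eta_0|^{2m-2}$, taking amplitude $\psi=|\eta|^\alpha\tilde\chi$ (whose variation is $\lesssim|\eta_0|^\alpha$). This yields
\[
\Big|\int_{|\eta-\eta_0|\le|\eta_0|/2}\cdots\Big|\lesssim |\eta_0|^{\alpha}\,|\eta_0|^{-(m-1)}=|\eta_0|^{\alpha-(m-1)}\le C,
\]
precisely because $\alpha\le m-1$.
\end{itemize}
This is the step where the upper restriction on $\alpha$ is sharp.

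The main obstacle is making the high-frequency non-stationary part rigorous and uniform in $y$. The integral is not absolutely convergent for $\alpha\ge0$, and the cutoffs must be chosen so that the derivative of the amplitude over $\phi'$ is integrable uniformly in $y$. We handle this by noting that on the non-stationary region $|\partial_\eta(\psi/\phi')|\lesssim|\eta|^{\alpha-1}/(|y|+|\eta|^{2m-1})$ plus terms of the same order. The integral of this expression is bounded by $\int_{|\eta|\ge1}|\eta|^{\alpha-2m}d\eta<\infty$. Combining the three pieces gives $\sup_y|I_\alpha(y)|\le C_{m,\alpha}$, and hence \eqref{2.3}.
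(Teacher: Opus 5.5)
Your proposal is correct and follows essentially the paper's route: scaling to $|t|^{-\frac{\alpha+1}{2m}}I(y)$, non-stationary-phase bounds away from the critical point $\eta_0$, and second-order Van der Corput on $|\eta|\sim|\eta_0|$ giving $|\eta_0|^{\alpha-(m-1)}$, which is exactly where $\alpha\le m-1$ enters. The differences are only technical: you use smooth cutoffs and a single integration by parts (with amplitude derivative $\lesssim|\eta|^{\alpha-2m}$), whereas the paper sums first-order Van der Corput over dyadic blocks; removing $|\eta|\le 2$ first also avoids the unbounded amplitude $\eta^{\alpha}$ near $0$, which the paper's Case A overlooks when $\alpha<0$.
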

\begin{proof}
	To estimate the oscillatory integral, we first perform the scaling $\xi =\left| t \right|^{-\frac{1}{2m}}\eta$, $y =\left| t \right|^{-\frac{1}{2m}}x$, then, a direct computation gives,
	\be\label{2.4}
	D_{x}^{\alpha}G\left( x,t \right) =\left| t \right|^{-\frac{\alpha +1}{2m}}\int_{\mathbb{R}}{\left| \eta \right|^{\alpha}e^{i\left( y\eta -\sgn \left( t \right) \eta ^{2m} \right)}d\eta}\doteq \left| t \right|^{-\frac{\alpha +1}{2m}} I\left( y \right),
	\ee
	with,
	\be\label{2.5}
	I\left( y \right) =\int_0^{+\infty}{\eta^{\alpha}e^{-i\left( \sgn \left( t \right) \eta ^{2m}+y\eta  \right)}d\eta}+\int_0^{+\infty}{ \eta^{\alpha}e^{-i\left(  \sgn \left( t \right) \eta ^{2m}-y\eta \right)}d\eta}\doteq I_1\left( y \right) +I_2\left( y \right).
	\ee
	It thus remains to show $I_1\left( y \right)$ and $I_2\left( y \right)$ are bounded. We will only treat $t>0$, since the proof for $t<0$ is similar. In addition, we notice the symmetry, $I_1\left( y \right) =I_2\left( -y \right)$, thus, we only need to prove that both $I_1\left( y \right)$ and $I_2\left( y \right)$ are bounded for $y\ge0$. 
	For simplicity, we write $\varPhi _1\left( \eta \right) =\eta ^{2m}+y\eta$ and $\varPhi _2\left( \eta \right) =\eta ^{2m}-y\eta$.
	\begin{itemize}
		\item \textbf{Estimate of $I_1\left( y \right)$}\quad If {\boldmath $\eta \le 2$}, then, it can be checked that $\left| I_1\left( y \right) \right|\le 2^{\alpha +1}$. If {\boldmath $\eta \ge 2$}, then, we consider the integral with $\eta \in J_k:=\left[ 2^k,2^{k+1} \right]$, $k\in\mathbb{N}^{+}$. On every $J_k$, we have, $\varPhi _{1}^{\prime}\left( \eta \right) \ge 2m2^{\left( 2m-1 \right) k}$ since $y\ge0$. Thus, by the first-order Van der Corput's lemma combined with the condition $-1<\alpha\le m-1$, we have,
		\be\label{2.6}
		\left| \int_{I_k}{\eta ^{\alpha}e^{-i\varPhi _1\left( \eta \right)}d\eta} \right|\le \frac{3}{2m2^{\left( 2m-1 \right) k}}\left[ 2^{\alpha \left( k+1 \right)}+\left| 2^{\alpha \left( k+1 \right)}-2^{\alpha k} \right| \right] \le \frac{3\cdot 2^{\alpha}}{2m}\frac{1}{2^{mk}}.
		\ee
		Summing over all dyadic intervals $J_k$ yields,
		\be\label{2.7}
		\left| \int_2^{+\infty}{\eta ^{\alpha}e^{-i\varPhi _1\left( y \right)}d\eta} \right|\le \sum_{k=1}^{\infty}{\frac{3\cdot 2^{\alpha}}{2m}\frac{1}{2^{mk}}}= \frac{3\cdot 2^{\alpha}}{2m\left( 2^m-1 \right)},
		\ee
		which implies that $\left| I_1\left( y \right) \right|\le \max \left\{ 2^{\alpha +1},\frac{3\cdot 2^{\alpha}}{2m\left( 2^m-1 \right)}\ri\} $.
		\item \textbf{Estimate of $I_2\left( y \right)$}\quad By direct computation, one has $\varPhi _{2}^{\prime}\left( \eta \right) =2m\eta ^{2m-1}-y$, and $\varPhi _{2}^{\prime\prime}\left( \eta \right) =2m\left( 2m-1 \right) \eta ^{2m-2}$, thus $\varPhi_2$ has a unique stationary point $\eta_0:=\left({\frac{y}{2m}}\right)^{\frac{1}{2m-1}}$ which implies $y=2m{\eta_0}^{2m-1}$. We now consider the following cases.
		\subitem \textbf{For} {\boldmath $\eta_0\le1$.}\quad Since $\varPhi^{\prime}_{2}\ge 2m2^{k\left( 2m-1 \right)}-y=2m\left( 2^{k\left( 2m-1 \right)}-\eta _0^{2m-1} \right) \ge 2m\left( 2^{k\left( 2m-1 \right)}-1 \right) $ on each $J_k$, the estimate for $I_2$ follows by the same argument as that used in \eqref{2.6} and \eqref{2.7}.
		\subitem \textbf{For} {\boldmath$\eta_0>1$.}\quad Next, we further consider the following three cases. 
		\subsubitem \textbf{Case A:}\ {\boldmath$\eta\le\frac{{\eta}_0}{2}$}\quad In this case, we have $\left|\varPhi^{\prime}_{2}\left(\eta\right)\right|\ge\left|\varPhi^{\prime}_{2}\left(\frac{\eta_0}{2}\right)\right|\ge y-2m\left(\frac{\eta_0}{2}\right)^{2m-1}\ge m{\eta_0}^{2m-1}$. Then, by the first-order Van der Corput's lemma  together with $\alpha\le m-1$, $\eta_0\ge2$, we can obtain
		\be
		\left| \int_0^{\frac{\eta _0}{2}}{\eta ^{\alpha}e^{-i\varPhi _2\left( \eta \right)}d\eta} \right|\le \frac{3}{m\eta _{0}^{2m-1}}\left[ \left( \frac{\eta _0}{2} \right) ^{\alpha}+\left( \frac{\eta _0}{2} \right) ^{\alpha} \right] \le \frac{6}{m2^{\alpha}}\cdot \eta _0^{\alpha -2m+1}\le \frac{3}{m2^{\alpha}}.
		\ee
		\subsubitem \textbf{Case B:}\ {\boldmath$\frac{{\eta}_0}{2}\le\eta\le2\eta_0$}\quad One can check that $\lf|\varPhi_{2}^{\prime\prime}\lf(\eta\ri)\ri|\ge\lf|\varPhi_{2}^{\prime\prime}\lf(\frac{\eta_0}{2}\ri)\ri|=\frac{m\left(2m-1\right)}{2^{2m-3}}{\eta_0}^{2m-2}:=c_m{\eta_0}^{2m-2}$, then using the second-order Van der Corput's lemma will lead to
		\be
		\left| \int_{\frac{\eta _0}{2}}^{2\eta _0}{\eta ^{\alpha}e^{-i\varPhi _2\left( \eta \right)}d\eta} \right|\le \frac{8}{\sqrt{c_m}\eta _0^{m-1}}\left[ \left( 2\eta _0 \right) ^{\alpha}+\left| \left( 2\eta _0 \right) ^{\alpha}-\left( \frac{\eta _0}{2} \right) ^{\alpha} \right| \right] \le \frac{2^{\alpha +3}}{\sqrt{c_m}}\eta _0^{\alpha -m+1}\le \frac{2^{\alpha +3}}{\sqrt{c_m}}.
		\ee
		\subsubitem \textbf{Case C:}\ {\boldmath$\eta\ge2\eta_0$}\quad For $k\in\mathbb{N}$, we consider the integral with $\eta \in Q_k:=\left[ 2^k\cdot2\eta_0,2^{k+1}\cdot2\eta_0 \right]$. On each $Q_k$, recall that $y=2m{\eta_0}^{2m-1}$, then, monotonicity of $\varPhi^{\prime}_{2}$ gives $$\varPhi_{2}^{\prime}\left(\eta\right)\ge\varPhi_{2}^{\prime}\lf(2^k\cdot2\eta_0\ri)=2m\eta _0^{2m-1}\left( 2^{\left( k+1 \right) \left( 2m-1 \right)}-1 \right) \ge m\eta _0^{2m-1}2^{\left( k+1 \right) \left( 2m-1 \right)}\ge m2^{\left( k+1 \right) \left( 2m-1 \right)}.$$ Hence the first-order Van der Corput's lemma applies on every $Q_k$, yielding the following estimate.
		\be
		\left| \int_{2\eta _0}^{+\infty}{\eta ^{\alpha}e^{-i\varPhi _2\left( \eta \right)}d\eta} \right|\le \sum_{k=0}^{\infty}{\left| \int_{Q_k}{\eta ^{\alpha}e^{-i\varPhi _2\left( \eta \right)}d\eta} \right|}\le \sum_{k=0}^{\infty}{\frac{c_{m,\alpha}}{2^{\left( k+1 \right)\left( 2m-1 \right)}}}=\frac{c_{m,\alpha}}{2^{2m-1}-1},
		\ee
		where $c_{m,\alpha}$ is a positive constant depending only on $m$ and $\alpha$. 
	\end{itemize}
    Therefore, taking the maximum of the above bounds gives a uniform estimate, and the proof is complete.\end{proof}
The Strichartz estimates of $L^q_tL^p_x$ are then obtained from the decay estimate via the standard $TT^{\ast}$ argument.
\begin{thm}\label{thm 2.4}
	For any $\alpha\in\lf[0,m-1\ri]$, a pair $\lf(p,q\ri)$ is called admissible if $\frac{1}{2mp}+\frac{1}{\lf(\alpha+1\ri)q}=\frac{1}{4m}$, where $p\in\lf[2,+\infty\ri]$, and $q\in\lf[\frac{4m}{\alpha+1},+\infty\ri]$, then, for every admissible pair $\lf(p,q\ri)$, one has,
	\be\label{2.11}
	\lVert D_{x}^{\frac{\alpha}{2}\left( 1-\frac{2}{p} \right)}S\left( t \right) u_0 \rVert _{L_{t}^{q}L_{x}^{p}}\les\lVert u_0 \rVert _{L_{x}^{2}}.
	\ee
\end{thm}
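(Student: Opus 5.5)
The plan is the standard $TT^{\ast}$ argument, fed by the decay estimate of Theorem \ref{thm 2.4}'s predecessor, Theorem \ref{thm 2.3}, and by an interpolation in $p$. Set $\beta\doteq\frac{\alpha}{2}\left(1-\frac{2}{p}\right)$ and define $T u_0\doteq D_x^{\beta}S(t)u_0$. Estimate \eqref{2.11} is equivalent to boundedness of $TT^{\ast}:L^{q'}_tL^{p'}_x\to L^q_tL^p_x$. Since $S(t)$ is unitary on $L^2_x$ and commutes with $D_x$,
\[
TT^{\ast}F(t)=\int_{\mathbb{R}} D_x^{2\beta}S(t-t')F(t')\,dt'
=\int_{\mathbb{R}} \big(D_x^{2\beta}G\big)(\cdot,t-t')\ast F(\cdot,t')\,dt'.
\]

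First I obtain a fixed-time estimate for the operator $D_x^{2\beta}S(t)$ from $L^{p'}_x$ to $L^p_x$, by complex interpolation in the spatial exponent. The two endpoints are:
\begin{itemize}
\item[(a)] $p=2$, where $2\beta=0$ and $\|S(t)f\|_{L^2}=\|f\|_{L^2}$ by Plancherel;
\item[(b)] $p=\infty$, where $2\beta=\alpha$ and Theorem \ref{thm 2.3} gives $\|D_x^{\alpha}S(t)f\|_{L^\infty}\le\|D_x^\alpha G(\cdot,t)\|_{L^\infty}\|f\|_{L^1}\les |t|^{-\frac{\alpha+1}{2m}}\|f\|_{L^1}$.
\end{itemize}
Stein's analytic interpolation applies to the family $z\mapsto D_x^{\alpha z}S(t)$ on the strip $0\le \mathrm{Re}\,z\le1$. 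On the line $\mathrm{Re}\,z=1$, the imaginary powers $D_x^{i\alpha y}$ are absorbed into the kernel bound: Theorem \ref{thm 2.3} is really a statement about $\int|\eta|^{\alpha+i\gamma}e^{i(y\eta-\eta^{2m})}d\eta$, whose Van der Corput bounds depend on $\gamma$ at most polynomially. On $\mathrm{Re}\,z=0$ they are unitary. This yields, with $\theta=1-\frac2p$,
\[
\|D_x^{2\beta}S(t)f\|_{L^p_x}\les |t|^{-\frac{\alpha+1}{2m}\left(1-\frac2p\right)}\|f\|_{L^{p'}_x}.
\]
Alternatively, I could avoid imaginary powers by inserting a Littlewood--Paley decomposition and interpolating each dyadic piece, though I expect the analytic family to be cleaner.

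Next, taking $L^p_x$ norms inside the time integral and applying Lemma \ref{lem 2.2} in one dimension ($n=1$) in $t$ with $\gamma=\frac{\alpha+1}{2m}\left(1-\frac2p\right)$ gives
\[
\|TT^\ast F\|_{L^q_tL^p_x}\les \Big\| |t|^{-\gamma}\ast\|F(\cdot,t)\|_{L^{p'}_x}\Big\|_{L^q_t}\les\|F\|_{L^{q'}_tL^{p'}_x}.
\]
This requires $\frac1q=\frac1{q'}-(1-\gamma)$, i.e. $\frac2q=\gamma$, which is exactly the admissibility relation $\frac{1}{2mp}+\frac{1}{(\alpha+1)q}=\frac1{4m}$. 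It also requires $0<\gamma<1$ and $1<q'<q<\infty$, which hold for $2<p<\infty$ and $q>2$; note $q\ge\frac{4m}{\alpha+1}\ge 4$ since $\alpha\le m-1$.

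The endpoint cases are handled separately:
\begin{itemize}
\item[(1)] $p=2$, $q=\infty$ is just the $L^2$ conservation.
\item[(2)] $p=\infty$, $q=\frac{4m}{\alpha+1}$ is not the forbidden Keel--Tao endpoint, since $q>2$. There HLS still applies with $\gamma=\frac{\alpha+1}{2m}<1$, provided the fixed-time bound $L^1\to L^\infty$ from Theorem \ref{thm 2.3} is used directly.
\end{itemize}
The case $\alpha=0$ is the classical argument. One point to check is that Theorem \ref{thm 2.3} is stated for $m\ge2$, while $m=1$ is covered by \eqref{1.23}.

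The main obstacle I expect is the interpolation step. It needs uniform control of $D_x^{\alpha+i\gamma}G$ with tolerable growth in $\gamma$, so I would revisit the proof of Theorem \ref{thm 2.3} and observe that the Van der Corput constants there involve $|\psi'|$ with $\psi=\eta^{\alpha+i\gamma}$, hence grow like $1+|\gamma|$. That growth is admissible for Stein's theorem.
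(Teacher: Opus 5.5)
Your proposal is correct and follows essentially the same route as the paper. Both arguments reduce to $TT^{\ast}$, obtain the fixed-time $L^{p'}_x\to L^p_x$ bound by complex interpolation between the $L^2$ isometry and the $L^1\to L^\infty$ decay $\les |t|^{-\frac{\alpha+1}{2m}}$ from Theorem~\ref{thm 2.3}, and close with Hardy--Littlewood--Sobolev in $t$, where $\frac{2}{q}=\frac{\alpha+1}{2m}\left(1-\frac{2}{p}\right)$ is exactly the admissibility relation. Your extra checks are handled correctly and fill in details the paper's proof passes over silently: that the Van der Corput bounds for $D_x^{\alpha+i\gamma}G$ grow only polynomially in $\gamma$, as Stein's theorem requires, and that for $m=1$ the decay comes from \eqref{1.23} rather than Theorem~\ref{thm 2.3}.
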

\begin{proof}
	First, we set $T:=D_{x}^{\frac{\alpha}{2}\left( 1-\frac{2}{p} \right)}S\left( t \right)$, its adjoint is given by $T^{\ast}=\int_{\mathbb{R}}{D_{x}^{\frac{\alpha}{2}\left( 1-\frac{2}{p} \right)}S\left( -\tau \right) d\tau}$. To prove \eqref{2.11}, it suffices to show that $T$ is bounded. By the standard $TT^{\ast}$ argument, this reduces to proving the boundedness of $T^{\ast}$ and $TT^{\ast}$. Consequently, \eqref{2.11} is equivalent to the following two estimates. 
	\be\label{2.12}
	\lVert T^*f \rVert _{L_{x}^{2}}=\Big\lVert\int_{\mathbb{R}}{D_{x}^{\frac{\alpha}{2}\left( 1-\frac{2}{p} \right)}S\left( -\tau \right) f\left( \tau ,\cdot \right) d\tau} \Big\rVert _{L_{x}^{2}}\les\lVert f \rVert _{L_{t}^{q^{\prime}}L_{x}^{p^{\prime}}},
	\ee
	\be\label{2.13}
	\lVert TT^*f \rVert _{L_{t}^{q}L_{x}^{p}}=\Big\lVert \int_{\mathbb{R}}{D_{x}^{\alpha \left( 1-\frac{2}{p} \right)}S\left( t-\tau \right) f\left( \tau ,\cdot \right) d\tau} \Big\rVert _{L_{t}^{q}L_{x}^{p}}\les\lVert f \rVert _{L_{t}^{q^{\prime}}L_{x}^{p^{\prime}}},
	\ee
where $\lf(p^{\prime},q^{\prime}\ri)$ is the conjugate pair of $\lf(p,q\ri)$. Thus, it suffices to show \eqref{2.13}. Recall that $S\left( t \right) u_0:=G\left( \cdot ,t \right)\ast u_0\left( \cdot \right)$, hence, by Young's inequality, one has,
\be\label{2.14}
\lVert D_{x}^{\alpha}S\left( t \right) u_0 \rVert _{L_{x}^{\infty}}=\lVert D_{x}^{\alpha}G\left( \cdot ,t \right)\ast u_0\left( \cdot \right) \rVert _{L_{x}^{\infty}}\le \lVert D_{x}^{\alpha}G\left( x,t \right) \rVert _{L_{x}^{\infty}}\lVert u_0 \rVert _{L^1_x}\les\left| t \right|^{-\frac{\alpha +1}{2m}}\lVert u_0 \rVert _{L^1_x}.
\ee
In view of the trivial $L^2$-isometry:
\be\label{2.15}
\lVert S\left(t\right)u_0\lVert_{L^{2}_x}=\lVert u_0\lVert_{L^2_x},
\ee
we define the analytic family of operators: $T_{\theta}:=\left| t \right|^{\frac{\theta \left( \alpha +1 \right)}{2m}}D_{x}^{\theta \alpha}S\left( t \right)$. Applying complex interpolation(see \cite{lofstrom1976interpolation}) between the lines $Re\left(\theta\right)=0$ and $Re\left(\theta\right)=1$, and using \eqref{2.14}, \eqref{2.15}, we have,
\be\label{2.16}
\lVert D_{x}^{\alpha \lf(1-\frac{2}{p}\ri)}S\left( t \right) u_0 \rVert _{L_{x}^{p}}\les\left| t \right|^{-\frac{ \alpha +1}{2m}\left( 1-\frac{2}{p}\right)}\lVert u_0 \rVert _{L_{x}^{p^{\prime}}}.
\ee Applying \eqref{2.16} together with the Hardy–Littlewood–Sobolev inequality yields,
\be\label{2.17}
\begin{aligned}
\Big\lVert \int_{\mathbb{R}}{D_{x}^{\alpha \left( 1-\frac{2}{p} \right)}S\left( t-\tau \right) f\left( \tau ,\cdot \right) d\tau} \Big\rVert _{L_{t}^{q}L_{x}^{p}}&\les\Big\lVert \int_{\mathbb{R}}{\lVert f\left( \tau ,\cdot \right) \rVert _{L_{x}^{p^{\prime}}}\left| t-\tau \right|}^{-\frac{\alpha +1}{2m}\left( 1-\frac{2}{p} \right)}d\tau \Big\rVert _{L_{t}^{q}}\\
&=\Big\lVert \lVert f\left( t,\cdot \right) \rVert _{L_{x}^{p^{\prime}}}\ast\left| t \right|^{-\frac{\alpha +1}{2m}\left( 1-\frac{2}{p} \right)} \Big\rVert _{L_{t}^{q}}\les\lVert f \rVert _{L_{t}^{q^{\prime}}L_{x}^{p^{\prime}}},
\end{aligned}
\ee
provided that $\frac{1}{q}=\frac{1}{q^{\prime}}-\big[1-\frac{\alpha +1}{2m}\left( 1-\frac{2}{p} \right)\big]$, which is precisely the condition $\frac{1}{2mp}+\frac{1}{\lf(\alpha+1\ri)q}=\frac{1}{4m}$ that we have already assumed. This completes the proof.
\end{proof}
For later use, we record the following special cases of  \eqref{2.11}.
\begin{col}One has,
\be\label{2.18}
\lVert S\left( t \right) u_0 \rVert _{L_{t}^{\infty}L_{x}^{2}}\les\lVert u_0 \rVert _{L_{x}^{2}},\qquad p=2,\ q=\infty,
\ee
\be\label{2.19}
\lVert D^{\frac{m-1}{2}}_xS\left( t \right) u_0 \rVert _{L_{t}^{4}L_{x}^{\infty}}\les\lVert u_0 \rVert _{L_{x}^{2}},\qquad \alpha=m-1,\ p=\infty,\ q=4,
\ee
\be\label{2.20}
\lVert S\left( t \right) u_0 \rVert _{L_{t,x}^{4m+2}}\les\lVert u_0 \rVert _{L_{x}^{2}},\qquad \alpha=0,\ p=q=4m+2, 
\ee
\be\label{2.21}
\lVert D_{x}^{\frac{m-1}{3}}S\left( t \right) u_0 \rVert _{L_{t,x}^{6}}\les\lVert u_0 \rVert _{L_{x}^{2}},\qquad \alpha=m-1,\ p=q=6.
\ee
\end{col}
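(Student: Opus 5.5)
The plan is to obtain each estimate directly from Theorem \ref{thm 2.4}: choose $\alpha\in[0,m-1]$ and the pair $(p,q)$, then check three things. The pair must satisfy the admissibility relation $\frac{1}{2mp}+\frac{1}{(\alpha+1)q}=\frac{1}{4m}$. We need $p\in[2,\infty]$ and $q\ge \frac{4m}{\alpha+1}$. Finally, the derivative exponent $\frac{\alpha}{2}(1-\frac2p)$ must match the one displayed.

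The four cases are as follows.
\begin{itemize}
\item For \eqref{2.18}, take $p=2$, $q=\infty$ (any $\alpha$, e.g.\ $\alpha=0$). The relation reads $\frac{1}{4m}+0=\frac1{4m}$, and the derivative exponent is $0$. This case also follows at once from the isometry \eqref{2.15}.
\item For \eqref{2.19}, take $\alpha=m-1$, $p=\infty$, $q=4$. The relation gives $0+\frac{1}{4m}=\frac1{4m}$, and $q=4=\frac{4m}{\alpha+1}$ is the endpoint value. The exponent is $\frac{m-1}{2}$.
\item For \eqref{2.20}, take $\alpha=0$ and $p=q$. The relation becomes $\frac{1+2m}{2mp}=\frac1{4m}$, so $p=4m+2$. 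This satisfies $q\ge 4m$, and the exponent is $0$.
\item For \eqref{2.21}, take $\alpha=m-1$ and $p=q=6$. Then $\frac1{12m}+\frac1{6m}=\frac1{4m}$ and $6\ge4$. The exponent is $\frac{m-1}{2}\cdot\frac23=\frac{m-1}{3}$.
\end{itemize}

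The only point needing care is the case $m=1$. Theorem \ref{thm 2.3} is stated only for $m\ge2$, but when $m=1$ all four cases have $\alpha=0$. The required decay $|G(x,t)|\lesssim|t|^{-1/2}$ is then the classical estimate \eqref{1.23}. The $TT^*$ argument of Theorem \ref{thm 2.4} therefore applies verbatim, and the corollary holds for all $m\ge1$.

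A second check concerns the endpoint admissible pairs ($q=4$ in \eqref{2.19}, $q=\infty$ in \eqref{2.18}), where Hardy--Littlewood--Sobolev requires $1<q'<q<\infty$. For $q=\infty$ the kernel exponent is $0$, so the estimate is just $L^2$ conservation. For $q=4$ we have $q'=\frac43<4$, which lies in the admissible range. Both endpoints are therefore covered.
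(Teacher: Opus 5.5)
Your proposal is correct and takes the same approach as the paper. The paper records these four estimates as special cases of \eqref{2.11} in Theorem \ref{thm 2.4} with exactly the parameter choices you check; your extra remarks are accurate and make explicit details the paper leaves implicit. Namely, for $m=1$ only $\alpha=0$ occurs, so the classical decay \eqref{1.23} can replace Theorem \ref{thm 2.3}; the $q=\infty$ case is just the isometry \eqref{2.15}; and $q=4$ (kernel exponent $\tfrac12$) stays within the Hardy--Littlewood--Sobolev range.
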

\subsection{$L_{x}^{p}L_{t}^{q}$-type estimates}
\hspace{1em}
We now turn to the $L_{x}^{p}L_{t}^{q}$-type estimates for the same semigroup. The first estimate is the local smoothing effect,  for which the exponent $m-\frac{1}{2}$ is known to be sharp(see \cite{sun2017sharpness}). 
\begin{thm}\textbf{(Local smoothing effect)}\label{thm 2.6}
\be\label{2.22}
\lVert D_{x}^{m-\frac{1}{2}}S\left( t \right) u_0 \rVert _{L_{x}^{\infty}L_{t}^{2}}\les\lVert u_0 \rVert _{L_{x}^{2}}.
\ee
\end{thm}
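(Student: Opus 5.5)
The plan is the classical Kenig--Ponce--Vega argument: write $S(t)u_0$ via the Fourier transform and change variables from the spatial frequency $\xi$ to the temporal frequency $\tau=\xi^{2m}$. For fixed $x$,
\[
D_x^{m-\frac12}S(t)u_0(x)=\int_{\mathbb{R}} e^{ix\xi}e^{-i\xi^{2m}t}|\xi|^{m-\frac12}\widehat{u_0}(\xi)\,d\xi ,
\]
and we split the integral into $\xi>0$ and $\xi<0$. On each half-line the map $\xi\mapsto \tau=\xi^{2m}$ is a diffeomorphism onto $(0,\infty)$ with $d\tau=2m|\xi|^{2m-1}d\xi$. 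Hence the positive-frequency piece becomes
\[
\int_0^\infty e^{-it\tau}\, e^{ix\tau^{1/(2m)}}\,\frac{|\xi|^{m-\frac12}}{2m|\xi|^{2m-1}}\,\widehat{u_0}\big(\tau^{1/(2m)}\big)\,d\tau ,
\]
and similarly for the negative one with $\xi=-\tau^{1/(2m)}$. This is an inverse Fourier transform in $t$ of the function
\[
F_x(\tau)=\mathbbm{1}_{\tau>0}\,e^{ix\tau^{1/(2m)}}\,\frac{|\xi|^{m-\frac12}}{2m|\xi|^{2m-1}}\,\widehat{u_0}(\xi)\Big|_{\xi=\tau^{1/(2m)}} .
\]

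Next apply Plancherel in $t$ for each fixed $x$. The unimodular factor $e^{ix\tau^{1/(2m)}}$ disappears in the modulus, so
\[
\|\cdot\|_{L^2_t}^2\les\int_0^\infty \frac{|\xi|^{2m-1}}{|\xi|^{4m-2}}|\widehat{u_0}(\xi)|^2\,d\tau
=\int_0^\infty \frac{|\xi|^{2m-1}}{|\xi|^{4m-2}}|\widehat{u_0}(\xi)|^2\,2m|\xi|^{2m-1}\,d\xi
=2m\int_0^\infty|\widehat{u_0}(\xi)|^2\,d\xi ,
\]
after changing variables back to $\xi$. The exponent $m-\frac12$ is exactly the one for which the Jacobian powers cancel, which is why it is the sharp gain. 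The bound is uniform in $x$, so taking the supremum over $x$ and adding the two half-line pieces (triangle inequality in $L^2_t$) gives
\[
\|D_x^{m-\frac12}S(t)u_0\|_{L^\infty_xL^2_t}\les\|u_0\|_{L^2_x},
\]
which is \eqref{2.22}.

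No step is a real obstacle. The only care needed is justifying the computation: first take $u_0$ in the Schwartz class, or with $\widehat{u_0}$ compactly supported away from $0$, so that all integrals converge absolutely, then extend to general $u_0\in L^2$ by density, using that the estimate is uniform in $x$.
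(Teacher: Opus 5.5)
Your proof is correct and is essentially the paper's argument. Both split into positive and negative frequencies, change variables $\eta=\xi^{2m}$ on each half-line, apply Plancherel in $t$ for fixed $x$, and use that the Jacobian $|\xi|^{2m-1}$ cancels the weight exactly; the paper phrases it as $\|S(t)u_0\|_{L^2_t}\lesssim\|u_0\|_{\dot{H}^{\frac12-m}}$ uniformly in $x$ and then substitutes $D_x^{m-\frac12}u_0$, while your density step for general $u_0\in L^2$ is a justification the paper omits.
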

\begin{proof}
	First, noting that $\eta=\eta \left( \xi \right)=\xi^{2m}$ is invertible on each of the intervals $\left( 0,+\infty \right)$ and $\left( -\infty ,0 \right)$ with inverse $\xi={\eta_1}^{-1}:={\eta_1}^{-1}(\xi)$ on $\left( 0,+\infty \right)$, and $\xi={\eta_2}^{-1}:={\eta_2}^{-1}(\xi)$ on $\left( -\infty ,0 \right)$, we can write
	\be
	\begin{aligned}
	S\left( t \right) u_0&=\int_0^{+\infty}{e^{^{-i\xi ^{2m}t}}e^{ix\xi}\widehat{u_0}\left( \xi \right) d\xi}+\int^0_{-\infty}{e^{^{-i\xi ^{2m}t}}e^{ix\xi}\widehat{u_0}\left( \xi \right) d\xi}\\
	&=\widehat{h_1}^{\left( \eta_1 \right)}-\widehat{h_2}^{\left( \eta_2 \right)},
\end{aligned}
	\ee
	where $h_1\left( \eta_1 \right) :=\chi _{\eta_1 \ge 0}\frac{e^{ix{\eta_1} ^{-1}}\widehat{u_0}\left( {\eta_1}^{-1} \right)}{{\eta_1}^{\prime}}$, $h_2\left( \eta_2 \right) :=\chi _{\eta_2 \ge 0}\frac{e^{ix{\eta_2} ^{-1}}\widehat{u_0}\left( {\eta_2}^{-1} \right)}{{\eta_2}^{\prime}}$, $\chi_A$ is the indicator function of the set $A$. Then, by Plancherel's equality, one has
	\be
	\lVert \widehat{h_1}^{\left( \eta_1 \right)} \rVert _{L_{t}^{2}}^{2}=\lVert h_1 \rVert _{L_{\eta_1}^{2}}^{2}\le \int_0^{+\infty}{\left| \frac{\widehat{u_0}\left( {\eta_1}^{-1} \right)}{{\eta_1}^{\prime}} \right|}^2d\eta_1 =\int_0^{+\infty}{\left| \frac{\widehat{u_0}\left( \xi \right)}{{\eta_1}^{\prime}\left( \xi \right)} \right|^2\left| {\eta_1}^{\prime}\left( \xi \right) \right|d\xi}\les\int_{\mathbb{R}}{\frac{\left| \widehat{u_0}\left( \xi \right) \right|^2}{\left| \xi \right|^{2m-1}}d\xi},
	\ee
	Similarly, we have $\lVert \widehat{h_2}^{\left( \eta_2 \right)} \rVert _{L_{t}^{2}}^{2}\les\int_{\mathbb{R}}{\frac{\left| \widehat{u_0}\left( \xi \right) \right|^2}{\left| \xi \right|^{2m-1}}d\xi}$. Finally, Minkowski's inequality yields
	\be
	\lVert S\left( t \right) u_0 \rVert _{L_{t}^{2}}\le \lVert \widehat{h_1}^{\left( \eta_1 \right)} \rVert _{L_{t}^{2}}+\lVert \widehat{h_2}^{\left( \eta_2 \right)} \rVert _{L_{t}^{2}}\les\lVert u_0 \rVert _{\dot{H}^{\frac{1}{2}-m}},
	\ee
	which is precisely \eqref{2.22}.
\end{proof}
We then turn to a Fourier restriction type estimate, which will also be needed in the sequel.
\begin{thm}
	\be\label{2.26}
	\lVert D_{x}^{-\frac{1}{4}}S\left( t \right) u_0 \rVert _{L_{x}^{4}L_{t}^{\infty}}\les\lVert u_0 \rVert _{L_{x}^{2}}.
	\ee
\end{thm}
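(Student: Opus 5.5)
Our plan is to prove \eqref{2.26} by a $TT^{\ast}$ argument in the mixed norm $L^4_xL^\infty_t$, following Kenig--Ponce--Vega's maximal function estimate for the Schr\"odinger group. Set $T:=D_x^{-\frac14}S(t)$. Then \eqref{2.26} is equivalent to the boundedness of
\[
TT^{\ast}f(x,t)=\int_{\mathbb{R}}\int_{\mathbb{R}} K(x-y,t-\tau)\,f(y,\tau)\,dy\,d\tau ,\qquad K(x,t)\doteq D_x^{-\frac12}G(x,t)=\int_{\mathbb{R}}|\xi|^{-\frac12}e^{ix\xi-i\xi^{2m}t}\,d\xi,
\]
from $L^{4/3}_xL^1_t$ to $L^4_xL^\infty_t$. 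The key intermediate bound is a kernel estimate that is uniform in time:
\[
\sup_{t\in\mathbb{R}}|K(x,t)|\les |x|^{-\frac12},\qquad x\neq0.
\]
Given this, we take the supremum in $t$ and integrate out $\tau$, which gives $\sup_t|TT^{\ast}f(x,t)|\les\int |x-y|^{-1/2}\|f(y,\cdot)\|_{L^1_t}\,dy$. The Hardy--Littlewood--Sobolev inequality (Lemma \ref{lem 2.2} with $n=1$, $\gamma=\frac12$, $p=\frac43$, $q=4$) then maps $L^{4/3}_y$ into $L^4_x$, and this closes the argument.

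To prove the kernel bound we use scaling in $x$ rather than in $t$. Substituting $\xi=\eta/|x|$ gives $K(x,t)=|x|^{-\frac12}\int|\eta|^{-\frac12}e^{i(\sgn(x)\eta-\lambda\eta^{2m})}\,d\eta$ with $\lambda=t|x|^{-2m}$. It therefore suffices to show that
\[
J(\lambda)\doteq\int_{\mathbb{R}}|\eta|^{-\frac12}e^{i(\eta-\lambda\eta^{2m})}\,d\eta
\]
is bounded uniformly in $\lambda\in\mathbb{R}$ (interpreted as an improper or oscillatory limit). As in the proof of Theorem \ref{thm 2.3}, we split into $\eta>0$ and $\eta<0$ and localize. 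The region $|\eta|\le1$ contributes at most $4$, since $|\eta|^{-1/2}$ is integrable there. For $|\eta|\ge1$ the phase is $\phi(\eta)=\eta-\lambda\eta^{2m}$, and we argue dyadically. Where $|\phi'|\gtrsim 1$ and $\phi'$ is monotone (away from the stationary point $\eta_0$, with $2m\lambda\eta_0^{2m-1}=1$), first-order Van der Corput (Lemma \ref{lem 2.1}) together with the decay of $|\eta|^{-1/2}$ gives summable contributions. More precisely, on $[1,\eta_0/2]$ we have $|\phi'|\ge\frac12$, so a single application costs $O(1)$. On $[2\eta_0,\infty)$ we have $|\phi'|\gtrsim\lambda\eta^{2m-1}$, which gives a geometric sum. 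When $\lambda<0$, or on the half-line where there is no stationary point, $|\phi'|\ge1$ throughout, so we get $O(1)$ directly.

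We expect the main obstacle to be the stationary region $\eta\in[\eta_0/2,2\eta_0]$ when $\eta_0\ge1$. There we apply second-order Van der Corput. We have $|\phi''|\sim\lambda\eta_0^{2m-2}\sim\eta_0^{-1}$ by the relation $\lambda\eta_0^{2m-1}\sim1$, so the contribution is bounded by $(\eta_0^{-1})^{-1/2}\cdot\eta_0^{-1/2}=O(1)$. This is exactly where the weight $|\xi|^{-1/2}$, i.e.\ the derivative loss $D_x^{-1/4}$, is sharp. The same scaling shows that all the bounds are independent of $\lambda$, and hence of $t$ and $x$. A density argument (taking $u_0$ Schwartz with $\widehat{u_0}$ supported away from $0$, or truncating $\eta$ smoothly and passing to the limit) justifies the formal $TT^{\ast}$ manipulations.
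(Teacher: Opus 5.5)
Your proof is correct, but it does not follow the paper's route. The paper's proof is a one-line appeal to Theorem 2.5 of Kenig--Ponce--Vega \cite{kenig1991oscillatory}. For a general phase $\eta(\xi)$ that theorem gives $\lVert S(t)u_0\rVert_{L^4_xL^\infty_t}^2\les\int|\widehat{u_0}(\xi)|^2|\eta'(\xi)/\eta''(\xi)|^{1/2}\,d\xi$. The only computation needed is $\eta'/\eta''=\xi/(2m-1)$ for $\eta=\xi^{2m}$, so the weight is $\sim|\xi|^{1/2}$, i.e.\ the $\dot H^{1/4}$ norm.

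You instead reprove that theorem for $\eta=\xi^{2m}$ using the mechanism behind it:
\begin{enumerate}[(i)]
\item $TT^{\ast}$ from $L^{4/3}_xL^1_t$ to $L^4_xL^\infty_t$;
\item the time-uniform kernel bound $\sup_t|D_x^{-1/2}G(x,t)|\les|x|^{-1/2}$, obtained by scaling in $x$ and first/second-order Van der Corput;
\item the one-dimensional Hardy--Littlewood--Sobolev inequality from $L^{4/3}$ to $L^4$.
\end{enumerate}

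The citation is shorter and covers general phases. Your version is self-contained and mirrors the paper's proof of the decay estimate (Theorem \ref{thm 2.3}), with scaling in $x$ instead of $t$. It also works verbatim for $m=1$. And it shows explicitly that $|\xi|^{-1/2}$ (i.e.\ $D_x^{-1/4}$) is exactly the weight that makes the stationary region $[\eta_0/2,2\eta_0]$ contribute $O(1)$.

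Two cosmetic points:
\begin{enumerate}[(i)]
\item The sentence ``when $\lambda<0$ \dots\ $|\phi'|\ge1$ throughout'' is not literally true: for $\lambda<0$ the stationary point lies on $\eta<0$. However, $J(-\lambda)=\overline{J(\lambda)}$, so you may assume $\lambda>0$, and then your description is accurate.
\item When $\tfrac12\le\eta_0<1$, the piece $[1,2\eta_0]$ is bounded trivially by its length. When $\eta_0<\tfrac12$, all of $[1,\infty)$ already lies in your region $[2\eta_0,\infty)$.
\end{enumerate}
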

\begin{proof}
	By Theorem 2.5 in \cite{kenig1991oscillatory}, we have
	\be\label{2.27}
	\lVert S\left( t \right) u_0 \rVert _{L_{x}^{4}L_{t}^{\infty}}^{2}\les\int_{\mathbb{R}}{\left| \widehat{u_0}\left( \xi \right) \right|^2\left| \frac{\eta ^{\prime}\left( \xi \right)}{\eta ^{\prime\prime}\left( \xi \right)} \right|}^{\frac{1}{2}}d\xi\les\lVert u_0 \rVert _{{\dot{H}}^{\frac{1}{4}}},
	\ee
	where $\eta=\xi^{2m}$, which is exactly \eqref{2.26}.
\end{proof}
\subsection{Fourier multiplier $F_{\rho}$}
\hspace{1em}
By the transference principle, the estimates obtained in the previous two subsections can be transferred to the $X^{s,b}$ framework(see \cite{tao2006nonlinear}). This is the key to obtaining a unified treatment of the multilinear estimates for different types of nonlinearities!
\begin{thm}
	For any $\rho>\frac{1}{2}$, one has
	\be\label{2.28}
	\lVert F_{\rho} \rVert _{L_{t}^{\infty}L_{x}^{2}}\les\lVert f \rVert _{L_{\tau,\xi}^{2}},
	\ee
	\be\label{2.29}
	\lVert D_{x}^{\frac{m-1}{2}}F_{\rho} \rVert _{L_{t}^{4}L_{x}^{\infty}}\les\lVert f \rVert _{L_{\tau,\xi}^{2}},
	\ee
	\be\label{2.30}
	\lVert F_{\rho} \rVert _{L_{t,x}^{4m+2}}\les\lVert f \rVert _{L_{\tau,\xi}^{2}},
	\ee
	\be\label{2.31}
	\lVert D_{x}^{\frac{m-1}{3}}F_{\rho} \rVert _{L_{t,x}^{6}}\les\lVert f \rVert _{L_{\tau,\xi}^{2}},
	\ee
	\be\label{2.32}
	\lVert D_{x}^{m-\frac{1}{2}}F_{\rho} \rVert _{L_{x}^{\infty}L_{t}^{2}}\les\lVert f \rVert _{L_{\xi ,\tau}^{2}},
	\ee
	\be\label{2.33}
	\lVert D_{x}^{-\frac{1}{4}}F_{\rho} \rVert _{L_{x}^{4}L_{t}^{\infty}}\les\lVert f \rVert _{L_{\xi ,\tau}^{2}},
	\ee
\end{thm}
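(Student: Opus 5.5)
The plan is to use the standard transference principle: represent $F_\rho$ as a superposition of modulated free solutions, then apply the linear estimates of Section 2 to each piece. Write $F=F_{\rho,+}f$, so that $\widehat{F}(\xi,\tau)=\langle\tau+\xi^{2m}\rangle^{-\rho}\widehat f(\xi,\tau)$. Change variables $\lambda=\tau+\xi^{2m}$ in the inverse space-time Fourier transform:
\begin{equation*}
F(x,t)=\int_{\mathbb{R}}e^{it\lambda}\langle\lambda\rangle^{-\rho}\Big(\int_{\mathbb{R}}e^{ix\xi}e^{-it\xi^{2m}}\widehat f(\xi,\lambda-\xi^{2m})\,d\xi\Big)d\lambda
=\int_{\mathbb{R}}e^{it\lambda}\langle\lambda\rangle^{-\rho}\,S(t)u_\lambda\,d\lambda,
\end{equation*}
where $\widehat{u_\lambda}(\xi):=\widehat f(\xi,\lambda-\xi^{2m})$. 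Any spatial multiplier $D_x^{\gamma}$ commutes with this representation and with $S(t)$. The same computation works for $F_{\rho,-}$: substitute $\lambda=\tau-\xi^{2m}$, which replaces $S(t)$ by the conjugate group $e^{+it\partial^{2m}}$. That group satisfies identical estimates, since its kernel is $\overline{G(-x,t)}$ up to symmetry and Theorems \ref{thm 2.3}, \ref{thm 2.4}, \ref{thm 2.6} and \eqref{2.26} are insensitive to the sign of $t$.

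Next, for each of the mixed norms $\|\cdot\|_{Z}$ in \eqref{2.28}--\eqref{2.33}, with $Z$ one of $L^\infty_tL^2_x$, $L^4_tL^\infty_x$, $L^{4m+2}_{t,x}$, $L^6_{t,x}$, $L^\infty_xL^2_t$, $L^4_xL^\infty_t$, apply Minkowski's integral inequality in $\lambda$. This is legitimate because every exponent is at least $1$, and the unimodular factor $e^{it\lambda}$ does not change any of these norms, since they are built from $|\cdot|$ pointwise. Hence
\begin{equation*}
\|D_x^{\gamma}F\|_{Z}\le\int_{\mathbb{R}}\langle\lambda\rangle^{-\rho}\|D_x^{\gamma}S(t)u_\lambda\|_{Z}\,d\lambda\les\int_{\mathbb{R}}\langle\lambda\rangle^{-\rho}\|u_\lambda\|_{L^2_x}\,d\lambda .
\end{equation*}
The last step uses \eqref{2.18}--\eqref{2.21} from the Corollary, the local smoothing estimate \eqref{2.22}, and the restriction estimate \eqref{2.26}, each with the appropriate $\gamma$.

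Finally, apply Cauchy--Schwarz in $\lambda$. Since $\rho>\frac12$, the integral $\int\langle\lambda\rangle^{-2\rho}d\lambda$ is finite. Moreover,
\begin{equation*}
\int\|u_\lambda\|_{L^2_x}^2d\lambda=\iint|\widehat f(\xi,\lambda-\xi^{2m})|^2d\xi\,d\lambda=\|f\|_{L^2_{\xi,\tau}}^2
\end{equation*}
by Fubini and the unit-Jacobian shift $\tau=\lambda-\xi^{2m}$, together with Plancherel. This yields all six estimates.

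The only delicate point is justifying Minkowski's inequality for the $L^p_xL^q_t$ norms, where the $\lambda$-superposition sits inside the time integral. It is nevertheless fine, because Minkowski's inequality applies iteratively to each mixed norm with exponents at least $1$, including the $L^\infty$ endpoints, which are handled by taking the supremum inside. The step where $\rho>\frac12$ is essential, and cannot be lowered, is the Cauchy--Schwarz step in $\lambda$.
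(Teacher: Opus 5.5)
Your proof is correct and is the same transference argument the paper invokes: the change of variables $\lambda=\tau\pm\xi^{2m}$ behind \eqref{2.34}--\eqref{2.35}, the linear estimates of Section~2, and Cauchy--Schwarz in $\lambda$ using $\rho>\frac12$. The only difference is the order of steps. The paper's displayed computation for \eqref{2.28} puts the $\lambda$-integral into a time-dependent profile $K_+$ and applies Cauchy--Schwarz pointwise in $\xi$; this works for $L^\infty_tL^2_x$ by unitarity at each fixed $t$, but it does not literally carry over to the other five norms. Your Minkowski-in-$\lambda$ step, which applies each linear estimate to the fixed datum $u_\lambda$ before Cauchy--Schwarz, is what actually justifies the cases the paper calls analogous.
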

\begin{proof}
	We can write $F_{\rho ,+}f=S\left( t \right) K_+$ and $F_{\rho ,-}f=S\left( -t \right) K_-$, where 
	\be\label{2.34}
	\widehat{K_+}^{\left( x \right)}\left( \xi ,t \right) =\int_{\mathbb{R}}{\frac{f\left( \xi ,\lambda -\xi ^{2m} \right)e^{it\lambda}}{\left< \lambda \right> ^{\rho}}}d\lambda,
	\ee
	\be\label{2.35}
	\widehat{K_-}^{\left( x \right)}\left( \xi ,t \right) =\int_{\mathbb{R}}{\frac{f\left( \xi ,\lambda +\xi ^{2m} \right)e^{it\lambda}}{\left< \lambda \right> ^{\rho}}}d\lambda .
	\ee
	Then, applying Cauchy-Schwarz inequality together with \eqref{2.18}--\eqref{2.22}, and \eqref{2.26} yields the desired estimates. We only proof \eqref{2.28} for $F_{\rho ,+}$; the proof for $F_{\rho ,-}$ and \eqref{2.29}-\eqref{2.33} are analogous and are omitted. Using \eqref{2.34}, and \eqref{2.18}, one has
	\ba
	\lVert F_{\rho ,+}f \rVert _{L_{t}^{\infty}L_{x}^{2}}^{2}=\lVert S\left( t \right) K_+ \rVert _{L_{t}^{\infty}L_{x}^{2}}^{2}\les\lVert K_+ \rVert _{L_{x}^{2}}^{2}&\sim\lVert \int_{\mathbb{R}}{\frac{f\left( \xi ,\lambda -\xi ^{2m} \right) e^{it\lambda}}{\left< \lambda \right> ^{\rho}}}d\lambda \rVert _{L_{\xi}^{2}}^{2}\\
	&\le\Big\lVert \left( \int_{\mathbb{R}}{\left< \lambda \right> ^{-2\rho}d\lambda} \right) ^{\frac{1}{2}}\lVert f\left( \xi ,\lambda -\xi ^{2m} \right) \rVert _{L_{\lambda}^{2}} \Big\rVert _{L_{\xi}^{2}}^{2}\les\lVert f \rVert _{L_{\tau ,\xi}^{2}}^{2},
	\ea
	where we have taken advantage of the fact that $\rho>\fr{1}{2}$ and the change of variable $\tau=\lambda -\xi ^{2m}$.
\end{proof}
\par
Note that Plancherel's equality gives the trivial estimate
\be\label{2.36}
\lVert F_0 \rVert _{L_{t,x}^{2}}\les\lVert f \rVert _{L_{\tau ,\xi}^{2}}.
\ee
Then, by interpolation between \eqref{2.30} and \eqref{2.36}, we obtain that for any $\rho>\frac{2m+1}{2m}\left(\frac{1}{2}-\frac{1}{p}\right)$, one has
\be\label{2.37}
\lVert F_{\rho} \rVert _{L_{t,x}^{p}}\les\lVert f \rVert _{L_{\tau ,\xi}^{2}},
\ee
where $2\le p\le 4m+2$. Particularly, one has the following Corollary.
\begin{col} For any $\rho>\frac{2m+1}{12m}$, we have
	\be\label{2.38}
	\lVert F_{\rho} \rVert _{L_{t,x}^{3}}\les\lVert f \rVert _{L_{\tau ,\xi}^{2}},
	\ee
	for any $\rho>\frac{2m+1}{8m}$, we have
	\be\label{2.39}
	\lVert F_{\rho} \rVert _{L_{t,x}^{4}}\les\lVert f \rVert _{L_{\tau ,\xi}^{2}},
	\ee
	for any $\rho>\frac{2m+1}{6m}$, we have
	\be\label{2.40}
	\lVert F_{\rho} \rVert _{L_{t,x}^{6}}\les\lVert f \rVert _{L_{\tau ,\xi}^{2}}.
	\ee
\end{col}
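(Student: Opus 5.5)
The corollary is a direct specialization of the interpolated estimate \eqref{2.37}. My plan is to rederive that estimate carefully and then substitute $p=3,4,6$.

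First I will make the interpolation precise. Consider the analytic family of operators $f\mapsto F_{z}f$ on a strip. On the line $\operatorname{Re} z=0$ the symbol $\langle\tau+\xi^{2m}\rangle^{-z}$ is unimodular, so Plancherel, \eqref{2.36}, gives an $L^2_{\tau,\xi}\to L^2_{t,x}$ bound with constant $1$. On the line $\operatorname{Re} z=\rho_1$, with any fixed $\rho_1>\tfrac12$, the estimate \eqref{2.30} gives an $L^2_{\tau,\xi}\to L^{4m+2}_{t,x}$ bound. The imaginary part only contributes a unimodular factor $\langle\tau+\xi^{2m}\rangle^{-i\operatorname{Im}z}$, which can be absorbed into $f$ without changing its $L^2$ norm. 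Hence the bound on this line is uniform in $\operatorname{Im} z$, and the growth needed for Stein's interpolation theorem is admissible.

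Next I apply Stein's theorem. With $\theta\in[0,1]$ defined by $\frac1p=\frac{1-\theta}{2}+\frac{\theta}{4m+2}$, it yields
\[
\|F_{\theta\rho_1}f\|_{L^p_{t,x}}\lesssim\|f\|_{L^2_{\tau,\xi}} .
\]
Solving for $\theta$ gives $\theta=\frac{2m+1}{2m}\left(1-\frac2p\right)$, so $\theta\rho_1$ ranges over all numbers greater than $\frac{2m+1}{2m}\left(\frac12-\frac1p\right)$ as $\rho_1\downarrow\tfrac12$. For any larger $\rho$, write $F_\rho f=F_{\theta\rho_1}\big(\langle\tau+\xi^{2m}\rangle^{-(\rho-\theta\rho_1)}f\big)$. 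The extra weight is bounded by $1$, so it does not increase the $L^2$ norm. This recovers \eqref{2.37} for all $2\le p\le 4m+2$, including the stated open ranges of $\rho$. The same argument works for $F_{\rho,-}$.

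Finally, I plug in the three exponents, each of which lies in $[2,4m+2]$ since $m\ge1$:
\begin{itemize}
\item for $p=3$, $\frac12-\frac13=\frac16$, so the threshold is $\frac{2m+1}{12m}$;
\item for $p=4$, $\frac12-\frac14=\frac14$, so the threshold is $\frac{2m+1}{8m}$;
\item for $p=6$, $\frac12-\frac16=\frac13$, so the threshold is $\frac{2m+1}{6m}$.
\end{itemize}
These give \eqref{2.38}, \eqref{2.39} and \eqref{2.40} respectively.

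The only step needing genuine care is the justification of the complex interpolation: the family must be analytic and the bound on the line $\operatorname{Re} z=\rho_1$ must be uniform in $\operatorname{Im} z$. I expect this to follow from the unimodularity observation above.
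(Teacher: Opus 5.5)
Your proposal is correct and follows the paper's route: the paper obtains \eqref{2.37} by interpolating between the Plancherel bound \eqref{2.36} and the $L^{4m+2}_{t,x}$ estimate \eqref{2.30}, then reads off $p=3,4,6$, and your Stein analytic-interpolation argument (absorbing the unimodular factor $\langle\tau+\xi^{2m}\rangle^{-i\,\mathrm{Im}\,z}$ into $f$) makes that unspecified ``interpolation'' step rigorous. Your value $\theta=\frac{2m+1}{2m}\bigl(1-\frac{2}{p}\bigr)$ and the resulting thresholds $\frac{2m+1}{12m}$, $\frac{2m+1}{8m}$, $\frac{2m+1}{6m}$ are all correct, and so is the remark that $3,4,6\le 4m+2$ for $m\ge 1$.
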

Finally, we turn to a low-frequency estimate, which will be used to treat the low-frequency regime(i.e. when $\left|\xi\right|$ is small); this complements the high-frequency estimates already established in the preceding proof.
\begin{col}
	For any fixed $N>0$, let us define the frequency projection operator $P_N$ by 
	\be\label{2.41}
	\widehat{P_Nf}\doteq\chi _{\left| \xi \right|\le N}\widehat{f}\left( \xi \right).
	\ee
	Then, for any $\rho>\frac{1}{2}$, we have the following estimate,
	\be\label{2.42}
	\lVert P_NF_{\rho} \rVert _{L_{x}^{2}L_{t}^{\infty}}\les\lVert u_0 \rVert _{L_{x}^{2}}.
	\ee
	\begin{proof}
		This follows directly from the Sobolev inequality; for details, see Lemma 2.2 of \cite{kenig1993cauchy}.
	\end{proof}
\end{col}
\section{Proof of Trilinear estimates}\label{sec 3}
\hspace{1em}
With the Strichartz estimates established above, we now turn to prove the key trilinear estimates. First, for $k=0,1,2,3$, we write the $X^{s,b}$
estimates in a unified form for later convenience. Unlike Tao's $[k;Z]$-multiplier norm method, our main proof can avoid the resonance effects caused by different choices of conjugation.
\subsection{Equivalent forms of the multilinear estimates}
\hspace{1em}
Following the idea of Tao\cite{tao2001multilinear}, let us first introduce a convolution hyperplane $K_4$ defined as follows,
\be\label{3.1}
K_4\doteq\left\{ \left( \vec{\xi},\vec{\tau} \right) \in \mathbb{R}^4\times \mathbb{R}^4|\sum_{j=1}^4{\xi _j}=\sum_{j=1}^4{\tau _j}=0 \right\}.
\ee
In addition, based on the format of nonlinearities, we introduce the following notations.
\be\label{3.2}
v_j\doteq\left\{ \begin{array}{l}
	\overline{u_j},\ if\ j\le k,\\
	u_j,\ if\ j>k,\\ 
\end{array} \right. \quad
L_j\left( \xi _j,\tau _j \right) \doteq\left\{ \begin{array}{l}
	\tau _j-\xi _j^{2m},\,\,if\,\,j\le k,\\
	\tau _j+\xi _j^{2m},\,\,if\,\,j>k,\\
\end{array} \right. 
\ee
where $j=1,2,3$. Moreover, we set $L_4\left( \xi _4,\tau _4 \right) =\tau _4-\xi _4^{2m}$. By duality, one has,
\be\label{3.3}
\begin{aligned}
\lVert N_k\left( u_1,u_2,u_3 \right) \rVert _{X^{s,-b}}=&\underset{\lVert v \rVert _{X^{-s,b}=1}}{sup}\left| \int_{\mathbb{R}^2}{v_1v_2v_3\overline{v}dxdt} \right|\\
=&\underset{\lVert v \rVert _{X^{-s,b}=1}}{sup}\left| \int_{K_4}{\widehat{v_1}\left( \xi _1,\tau _1 \right) \widehat{v_2}\left( \xi _2,\tau _2 \right) \widehat{v_3}\left( \xi _3,\tau _3 \right) \widehat{\overline{v}}\left( \xi _4,\tau _4 \right)} \right|.
\end{aligned}
\ee
Now we define,
\be\label{3.4}
f_j\left( \xi _j,\tau _j \right) \doteq \left\{ \begin{array}{l}
	\widehat{u_j}\left( \xi _j,\tau _j \right) \left< \xi _j \right> ^{s}\left< L_j \right> ^{b^{\prime}},\,\,if\,\,0\le j\le 3,\\
	\widehat{v}\left( \xi _4,\tau _4 \right) \left< \xi _4 \right> ^{-s}\left< L_4 \right> ^{b},\,\,if\,\,j=4.\\
\end{array} \right.
\ee
With this notation, the spatial Bourgain spaces estimates \eqref{1.16} are equivalent to
\be\label{3.5}
\varGamma _1\doteq\left| \int_{K_4}{\frac{\prod_{j=1}^3{\left< \xi _j \right> ^{-s}}\prod_{j=1}^4{f_j\left( \xi _j,\tau _j \right)}}{\left< \xi _4 \right> ^{-s}\left< L_4 \right> ^{b}\prod_{j=1}^3{\left< L_j \right> ^{b^{\prime}}}}} \right|\les\prod_{j=1}^4{\lVert f_j \rVert _{L_{\xi ,\tau}^{2}}},
\ee
$f_j$ are some $L^2$ test functions. Similarly, the temporal Bourgain spaces estimates \eqref{1.17} are equivalent to
\be\label{3.6}
\varGamma _2\doteq\left| \int_{K_4}{\frac{\prod_{j=1}^3{\left< \xi _j \right> ^{-s}}\prod_{j=1}^4{f_j\left( \xi _j,\tau _j \right)}}{\left< \tau _4 \right> ^{\frac{-s}{2m}}\left< L_4 \right> ^{b}\prod_{j=1}^3{\left< L_j \right> ^{b^{\prime}}}}} \right|\les\prod_{j=1}^4{\lVert f_j \rVert _{L_{\xi ,\tau}^{2}}}.
\ee
We now proceed to prove \eqref{3.5}. 
\subsection{Proof of the spatial Bourgain spaces estimates \eqref{3.5}}
\hspace{1em}
\begin{proof}[(\textbf{Proof of \eqref{3.5} for} {\boldmath$k=0,1,2$} \textbf{and} {\boldmath$m\ge2$})] The proof for $m=1$ is placed in Remark \ref{rem 3.2}. We want to point out that the conjugations in the nonlinearities alter the resonance function(see \eqref{3.23}). And this makes huge difference in Tao's $[k;Z]$-multiplier norm method, which causes difficulties in the analysis of Himonas and Yan\cite{himonas2024schrodinger,himonas2026higher}. In contrast, despite the changes in the resonance relations, the Strichartz estimates remain unchanged, indeed the multipliers $F_{\rho,+}$, $F_{\rho,-}$ satisfy the same estimates, which allows us to treat different nonlinearities in a unified manner. Therefore, we may relabel the frequency variables and assume, without loss of generality, that $\left| \xi _1 \right|\le \left| \xi _2 \right|\le \left| \xi _3 \right|$. Next, in order to estimate \eqref{3.5}, we now decompose the integration domain $K_4$ into several cases.
\begin{itemize}
	\item \textbf{Case A.} {\boldmath $\left| \xi _3 \right|\le 10$,}
	\item \textbf{Case B.} {\boldmath $\left| \xi _3 \right|\ge 10, \left| \xi _2 \right|\le 4$,}
	\item \textbf{Case C.} {\boldmath $\left| \xi _3 \right|\ge 10, \left| \xi _2 \right|\ge 4, \left| \xi _1 \right|\le 1$,}
	\item \textbf{Case D.} {\boldmath $\left| \xi _3 \right|\ge 10, \left| \xi _2 \right|\ge 4, \left| \xi _1 \right|\ge 1, \left| \xi _4 \right|\ge \frac{\left| \xi _1 \right|}{5}$,}
	\item \textbf{Case E.} {\boldmath $\left| \xi _3 \right|\ge 10, \left| \xi _2 \right|\ge 4, \left| \xi _1 \right|\ge 1, \left| \xi _4 \right|\le \frac{\left| \xi _1 \right|}{5}$.}
\end{itemize}
Furthermore, we will need to consider the following four subcases for some cases.
{\boldmath\be\label{3.7}
\begin{aligned}
&\left( a \right) \ \left< L_1 \right> =\underset{j=1,2,3,4}{\max}\left< L_j \right> ,\quad \left( b \right) \ \left< L_2 \right> =\underset{j=1,2,3,4}{\max}\left< L_j \right> ,\\ &\left( c \right) \ \left< L_3 \right> =\underset{j=1,2,3,4}{\max}\left< L_j \right> ,\quad \left( d \right) \ \left< L_4 \right> =\underset{j=1,2,3,4}{\max}\left< L_j \right> .
\end{aligned}
\ee}
In each of these four subcases, we shall apply the modulation exponent transfer technique to redistribute the exponents among the modulation factors, $L_j$, $j=1,2,3,4$, so that the estimate can be handled by the Strichartz estimates obtained before. For convenience, we set,
\be
\varGamma _{1,A}\doteq\left| \int_{K_4\cap A}{\frac{\prod_{j=1}^3{\left< \xi _j \right> ^{-s}}\prod_{j=1}^4{f_j\left( \xi _j,\tau _j \right)}}{\left< \xi _4 \right> ^{-s}\left< L_4 \right> ^{b}\prod_{j=1}^3{\left< L_j \right> ^{b^{\prime}}}}} \right|,
\ee
and define $\varGamma _{1,B}$, $\varGamma _{1,C}$, $\varGamma _{1,D}$, $\varGamma _{1,E}$ analogously. In addition, we denote,
\be\label{3.9}
M\doteq\frac{\prod_{j=1}^3{\left< \xi _j \right> ^{-s}}}{\left< \xi _4 \right> ^{-s}},
\ee
and write $\widehat{F_{\rho}^{\left( j \right)}}\doteq\widehat{F_{\rho}f_j}\left( \xi _j,\tau _j \right) $. In contrast to Himonas and Yan, who required $s>s_{m,1}$, we prove the result for $s\ge s_{m,k}$ with $k=0,1,2$. In addition, based on the relation $\sum_{j=1}^4{\xi _j}=0$, one has $\left< \xi _4 \right> \les\prod_{j=1}^3{\left< \xi _j \right>}$, thus, we have,
$$\left( \frac{\prod_{j=1}^3{\left< \xi _j \right>}}{\left< \xi _4 \right>} \right) ^{-s}=\left( \frac{\prod_{j=1}^3{\left< \xi _j \right>}}{\left< \xi _4 \right>} \right) ^{-s_{m,k}}\left( \frac{\left< \xi _4 \right>}{\prod_{j=1}^3{\left< \xi _j \right>}} \right) ^{s-s_{m,k}}\les\left( \frac{\prod_{j=1}^3{\left< \xi _j \right>}}{\left< \xi _4 \right>} \right) ^{-s_{m,k}}.$$
 Hence, it suffices to establish the estimate at the critical regularity $s=s_{m,k}$=$-\fr{m-1}{2}$.
\begin{itemize}
	\item \textbf{Case A.} {\boldmath $\left| \xi _3 \right|\le 10$}\quad In this case, we have the bound $M\les\prod_{j=1}^3{\left< \xi _j \right>}\les1$. Using Hölder's inequality and applying the Strichartz estimate \eqref{2.39}, we obtain,
	\be\label{3.10}
	\varGamma _{1,A}\les\prod_{j=1}^3{\lVert F_{b^{\prime}}^{\left( j \right)} \rVert _{L_{t,x}^{4}}}\lVert F_{b}^{\left( 4 \right)} \rVert _{L_{t,x}^{4}}\les\prod_{j=1}^4{\lVert f_j \rVert _{L_{\tau,\xi}^{2}}},
	\ee
	provided that,
	\be\label{3.11}
	b^{\prime}>\frac{2m+1}{8m},\quad b>\frac{2m+1}{8m}.
	\ee
	\item \textbf{Case B.} {\boldmath $\left| \xi _3 \right|\ge 10, \left| \xi _2 \right|\le 4$}\quad First, since $\left| \xi _4 \right|\ge \left| \xi _3 \right|-\left| \xi _1 \right|-\left| \xi _2 \right|\ge \frac{\left| \xi _3 \right|}{5}$, one has $\left| \xi _4 \right|\sim\left| \xi _3 \right|$. We then have the bound $M\les1$, and the estimate follows as in  \textbf{Case A}, provided that \eqref{3.11} holds.
	\item \textbf{Case C.} {\boldmath $\left| \xi _3 \right|\ge 10, \left| \xi _2 \right|\ge 4, \left| \xi _1 \right|\le 1$} In this case, we first consider, $$M\les\frac{\prod_{j=1}^3{\left< \xi _j \right> ^{\frac{m-1}{2}}}}{\left< \xi _4 \right> ^{\frac{m-1}{2}}}\les\prod_{j=1}^3{\left< \xi _j \right> ^{\frac{m-1}{2}}}.$$
	\subitem \textbf{Case C.(a)}\quad {\boldmath$\left< L_1 \right> =\underset{j=1,2,3,4}{\max}\left< L_j \right>$}\quad From the frequency ordering and a modulation exponent transfer, we have,
	\be\label{3.12}
	M\les\left< \xi _2 \right> ^{\frac{m-1}{2}}\left< \xi _3 \right> ^{\frac{m-1}{2}}\sim\left| \xi _2\xi _3 \right|^{\frac{m-1}{2}},\quad \left< L_4 \right> ^{b}\prod_{j=1}^3{\left< L_j \right> ^{b^{\prime}}}\gtr\ \prod_{j=2}^4{\left< L_j \right> ^{b^{\prime}+\frac{b}{3}}}.
	\ee
	Combining the above two estimates and Strichartz estimates \eqref{2.28}, \eqref{2.29} yields,
	\be\label{3.13}
	\varGamma _{1,C}\les\lVert F_{0}^{\left( 1 \right)} \rVert _{L_{t,x}^{2}}\lVert D_{x}^{\frac{m-1}{2}}F_{b^{\prime}+\frac{b}{3}}^{\left( 2 \right)} \rVert _{L_{t}^{4}L_{x}^{\infty}}\lVert D_{x}^{\frac{m-1}{2}}F_{b^{\prime}+\frac{b}{3}}^{\left( 3 \right)} \rVert _{L_{t}^{4}L_{x}^{\infty}}\lVert F_{b^{\prime}+\frac{b}{3}}^{\left( 4 \right)} \rVert _{L_{t}^{\infty}L_{x}^{2}}\les\prod_{j=1}^4{\lVert f_j \rVert _{L_{\xi ,\tau}^{2}}},
	\ee
	provided that,
	\be\label{3.14}
	b^{\prime}+\frac{b}{3}>\frac{1}{2}.
	\ee
	\textbf{Case C.(b)} {\boldmath$\left< L_2 \right> =\underset{j=1,2,3,4}{\max}\left< L_j \right>$}\quad First, we have the bound,
	\be\label{3.15}
	M\les\left< \xi _2 \right> ^{\frac{m-1}{2}}\left< \xi _3 \right> ^{\frac{m-1}{2}}\les\left| \xi _3 \right|^{m-1}\les\left| \xi _3 \right|^{m-\frac{1}{2}}.
	\ee In this subcase, we perform a modulation exponent transfer by rewriting the denominator as,
	\be\label{3.16}
	\left< L_4 \right> ^{b}\prod_{j=1}^3{\left< L_j \right> ^{b^{\prime}}}\gtr\left< L_1 \right> ^{b^{\prime}+\epsilon}\left< L_2 \right> ^{b^{\prime}-2\epsilon}\left< L_3 \right> ^{b^{\prime}+\epsilon}\left< L_4 \right> ^{b},\qquad with\ \epsilon=\frac{1}{9}-\frac{1}{36m}.
	\ee
	Combining this with the Strichartz estimates \eqref{2.32}, \eqref{2.38}, \eqref{2.40} and \eqref{2.42}, we obtain,
	\be\label{3.17}
	\varGamma _{1,C}\les\lVert P_1F_{b^{\prime}+\epsilon}^{\left( 1 \right)} \rVert _{L_{x}^{2}L_{t}^{\infty}}\lVert F_{b^{\prime}-2\epsilon}^{\left( 2 \right)} \rVert _{L_{x,t}^{3}}\lVert D_{x}^{m-\frac{1}{2}}F_{b^{\prime}+\epsilon}^{\left( 3 \right)} \rVert _{L_{x}^{\infty}L_{t}^{2}}\lVert F_{b}^{\left( 4 \right)} \rVert _{L_{x,t}^{6}}\les\prod_{j=1}^4{\lVert f_j \rVert _{L_{\xi ,\tau}^{2}}},
	\ee
	where $P_1$ is the  projection defined by \eqref{2.37}, provided that,
	\be\label{3.18}
	b^{\prime}-2\epsilon>\frac{2m+1}{12m},\quad
	b^{\prime}+\epsilon>\frac{1}{2},\quad b>\frac{2m+1}{6m}.
	\ee\\ 
	\textbf{Case C.(c)} {\boldmath$\left< L_3 \right> =\underset{j=1,2,3,4}{\max}\left< L_j \right>$}\quad We distinguish two subcases according to the size of $\xi_4$.
	\subitem \textbf{Case.C.(c).1:} {\boldmath $\left| \xi _4 \right|\ge \frac{\left| \xi _3 \right|}{5}$}. In this case $\left| \xi _4 \right|\sim\left| \xi _3 \right|$, and we have the following estimate and modulation exponent transfer,
	\be\label{3.19}
	M\les\left| \xi _4 \right|^{m-\frac{1}{2}},\qquad \left< L_4 \right> ^{b}\prod_{j=1}^3{\left< L_j \right> ^{b^{\prime}}}\gtr\left< L_1 \right> ^{b^{\prime}+\epsilon}\left< L_2 \right> ^{b}\left< L_3 \right> ^{b^{\prime}-2\epsilon}\left< L_4 \right> ^{b^{\prime}+\epsilon},
	\ee
	which yields,
	 \be\label{3.20}
	 \varGamma _{1,C}\les\lVert P_1F_{b^{\prime}+\epsilon}^{\left( 1 \right)} \rVert _{L_{x}^{2}L_{t}^{\infty}}\lVert F_{b}^{\left( 2 \right)} \rVert _{L_{x,t}^{6}}\lVert F_{b^{\prime}-2\epsilon}^{\left( 3 \right)} \rVert _{L_{x,t}^{3}}\lVert D_{x}^{m-\frac{1}{2}}F_{b^{\prime}+\epsilon}^{\left( 4 \right)} \rVert _{L_{x}^{\infty}L_{t}^{2}}\les\prod_{j=1}^4{\lVert f_j \rVert _{L_{\xi ,\tau}^{2}}},
	 \ee
	 provided that \eqref{3.18}, $b>b^{\prime}$ and $b^{\prime}+\epsilon>b$ hold. We point out that $b^{\prime}+\epsilon>b$ is automatically satisfied since $b^{\prime}+\epsilon>\fr{1}{2}>b$. 
	\subitem \textbf{Case.C.(c).2:} {\boldmath $\left| \xi _4 \right|\le \frac{\left| \xi _3 \right|}{5}$}. Then $\left| \xi _2 \right|\ge \left| \xi _3 \right|-\left| \xi _1 \right|-\left| \xi _4 \right|\ge \frac{7}{10}\left| \xi _3 \right|$, so that $\left| \xi _2 \right|\sim\left| \xi _3 \right|$, using the assumption that $\left| \xi _1 \right|\le 1$. The estimate then follows by the similar argument leading to \eqref{3.20}.\\
	\textbf{Case C.(d)} {\boldmath$\left< L_4 \right> =\underset{j=1,2,3,4}{\max}\left< L_j \right>$}\quad This subcase can be handled similarly to \textbf{Case C.(a)} by noting that $M\les\left| \xi _2\xi _3 \right|^{\frac{m-1}{2}}$ and $\left< L_4 \right> ^{b}\prod_{j=1}^3{\left< L_j \right> ^{b^{\prime}}}\gtr\ \prod_{j=1}^3{\left< L_j \right> ^{b^{\prime}+\frac{b}{3}}}$. Thus, $\varGamma _{1,C}$ can be estimated in the same way as in \textbf{Case C.(a)}, provided that \eqref{3.14} holds.
	\item \textbf{Case D.} {\boldmath $\left| \xi _3 \right|\ge 10, \left| \xi _2 \right|\ge 4, \left| \xi _1 \right|\ge 1, \left| \xi _4 \right|\ge \frac{\left| \xi _1 \right|}{5}$} In this case, we have $\left| \xi _4 \right|\sim\left| \xi _3 \right|$ or $\left| \xi _2 \right|\sim\left| \xi _3 \right|$. More precisely, if $|\xi_4|\ge\fr{|\xi_2|}{4}$, then one has $|\xi_4|\ge|\xi_3|-|\xi_2|-|\xi_1|\ge|\xi_3|-4|\xi_4|-5|\xi_4|$, which implies that $|\xi_4|\sim|\xi_3|$. Similarly, if $|\xi_4|\le\fr{|\xi_2|}{4}$, then we have $|\xi_2|\ge|\xi_3|-|\xi_1|-|\xi_4|\ge|\xi_3|-|\xi_2|-\fr{|\xi_4|}{4}$, which will lead to $|\xi_2|\sim|\xi_3|$.
	
	\subitem \textbf{Case D.(a)} {\boldmath$\left< L_1 \right> =\underset{j=1,2,3,4}{\max}\left< L_j \right>$}\quad We observe that $M\les\left< \xi _2 \right> ^{\frac{m-1}{2}}\left< \xi _3 \right> ^{\frac{m-1}{2}}\les\left| \xi _2\xi _3 \right|^{\frac{m-1}{2}}$. This is exactly the same bound as in \textbf{Case C.(a)}. Hence the estimate for $\varGamma_{1,D}$ follows by the same argument, and we omit the details.
	\subitem \textbf{Case D.(b)} {\boldmath$\left< L_2 \right> =\underset{j=1,2,3,4}{\max}\left< L_j \right>$}\quad In this subcase, we have 
	\be\label{3.21}
	M\les\left< \xi _3 \right> ^{m-1}\les\frac{\left| \xi _3 \right|^{m-\frac{1}{2}}}{\left| \xi _1\xi _4 \right|^{\frac{1}{4}}},\quad \left< L_4 \right> ^{b}\prod_{j=1}^3{\left< L_j \right> ^{b^{\prime}}}\gtr\prod_{j=1,3,4}{\left< L_j \right> ^{b^{\prime}+\frac{b}{3}}},
	\ee
	where we used $\left| \xi _4 \right|\gtr\left| \xi _1 \right|$. Applying the Strichartz estimates \eqref{2.32}, \eqref{2.33} yields
	\be\label{3.22}
	\varGamma _{1,D}\les\lVert D_{x}^{-\frac{1}{4}}F_{b^{\prime}+\frac{b}{3}}^{\left( 1 \right)} \rVert _{L_{x}^{4}L_{t}^{\infty}}\lVert F_{0}^{\left( 2 \right)} \rVert _{L_{x,t}^{2}}\lVert D_{x}^{m-\frac{1}{2}}F_{b^{\prime}+\frac{b}{3}}^{\left( 3 \right)} \rVert _{L_{x}^{\infty}L_{t}^{2}}\lVert D_{x}^{-\frac{1}{4}}F_{b^{\prime}+\frac{b}{3}}^{\left( 4 \right)} \rVert _{L_{x}^{4}L_{t}^{\infty}}\les\prod_{j=1}^4{\lVert f_j \rVert _{L_{\xi ,\tau}^{2}}},
	\ee
	provided that \eqref{3.14} holds.\\
	 \subitem \textbf{Case D.(c)} {\boldmath$\left< L_3 \right> =\underset{j=1,2,3,4}{\max}\left< L_j \right>$}\quad This subcase can be treated similarly to \textbf{Case C.(b)}, with the observation that, $\left| \xi _4 \right|\sim\left| \xi _3 \right|$ or $\left| \xi _2 \right|\sim\left| \xi _3 \right|$, which allows the transfer of exponents between the two factors. 
	 \subitem \textbf{Case D.(d)} {\boldmath$\left< L_4 \right> =\underset{j=1,2,3,4}{\max}\left< L_j \right>$}\quad In this subcase, the argument is again analogous to \textbf{Case D.(a)}.
	\item \textbf{Case E.} {\boldmath $\left| \xi _3 \right|\ge 10, \left| \xi _2 \right|\ge 4, \left| \xi _1 \right|\ge 1, \left| \xi _4 \right|\le \frac{\left| \xi _1 \right|}{5}$.}\quad In this case, one has
	$$M\les\frac{\prod_{j=1}^3{\left< \xi _j \right> ^{\frac{m-1}{2}}}}{\left< \xi _4 \right> ^{\frac{m-1}{2}}}\les\prod_{j=1}^3{\left< \xi _j \right> ^{\frac{m-1}{2}}}.$$
	and $|\xi_2|\sim|\xi_3|$, due to the fact that $|\xi_2|\ge|\xi_3|-|\xi_1|-|\xi_4|\ge|\xi_3|-|\xi_1|-\fr{|\xi_1|}{4}\ge|\xi_3|-\fr{5|\xi_2|}{4}$.\par
	Then, we will need the help from the resonance function $H_k$, with $k=0,1,2$, defined as follows.
	\be\label{3.23}
	H_k\doteq\sum_{j=1}^4{L_j}\left(k\right)=\left\{ \begin{array}{ll}
		\xi _{1}^{2m}+\xi _{2}^{2m}+\xi _{3}^{2m}-\xi _{4}^{2m},\quad &k=0,\\
		-\xi _{1}^{2m}+\xi _{2}^{2m}+\xi _{3}^{2m}-\xi _{4}^{2m},\quad &k=1,\\
		-\xi _{1}^{2m}-\xi _{2}^{2m}+\xi _{3}^{2m}-\xi _{4}^{2m},\quad &k=2,\\
	\end{array} \right. 
	\ee
	For each $k$, we have the following bound.
	\begin{cla}\label{cla 3.1}
		Under the assumptions of \textbf{Case E}, the resonance function satisfies the lower bound
		\be\label{3.24}
		\left| H_k \right|\ge \left( 2m+\left( \frac{4}{5} \right) ^{2m}-\frac{26}{25} \right) \left| \xi _2 \right|^{2m-1}\left| \xi _1 \right|,\qquad k=0,1,2.
		\ee
	\end{cla}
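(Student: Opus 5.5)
The plan is to reduce everything to the worst sign configuration $k=2$ and then bound $H_2$ from below by a binomial expansion of $\xi_3^{2m}$ in terms of $\xi_2$.

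\textbf{Step 1: sign structure.} In \textbf{Case E} we have $|\xi_1|\le|\xi_2|\le|\xi_3|$, $|\xi_1|\ge 1$ and $|\xi_4|\le |\xi_1|/5$. Suppose $\xi_1$ and $\xi_2$ had opposite signs. Then
$$|\xi_3|=|\xi_1+\xi_2+\xi_4|\le |\xi_2|-|\xi_1|+\tfrac{|\xi_1|}{5}<|\xi_2|,$$
which contradicts $|\xi_2|\le|\xi_3|$. Hence $\xi_1,\xi_2$ have the same sign. Every $H_k$ is even in $\vec\xi$, so we may take $\xi_1,\xi_2>0$.

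\textbf{Step 2: size of $\xi_3$.} Put $a\doteq\xi_1+\xi_4$. Then $\tfrac45\xi_1\le a\le\tfrac65\xi_1$ and $|\xi_3|=\xi_2+a$.

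\textbf{Step 3: reduction to $k=2$.} Comparing the three expressions in \eqref{3.23},
$$H_1=H_2+2\xi_2^{2m},\qquad H_0=H_2+2\xi_1^{2m}+2\xi_2^{2m}.$$
So once we show $H_2$ is bounded below by the positive quantity in \eqref{3.24}, the same bound holds for $H_1$ and $H_0$. This requires no absolute-value bookkeeping, since all three are then positive.

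\textbf{Step 4: binomial expansion.} All terms in the expansion of $(\xi_2+a)^{2m}$ are nonnegative, so keeping the first two and the last one gives
$$H_2=(\xi_2+a)^{2m}-\xi_2^{2m}-\xi_1^{2m}-\xi_4^{2m}\ \ge\ 2m\,\xi_2^{2m-1}a+a^{2m}-\xi_1^{2m}-\xi_4^{2m}.$$

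\textbf{Step 5: bounding the remaining terms.} We use $a\ge \tfrac45\xi_1$ in each of the first two terms, and then $a^{2m}\ge(\tfrac45)^{2m}\xi_1^{2m}$.

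The negative terms are controlled using $\xi_1\le\xi_2$:
$$\xi_1^{2m}\le \xi_2^{2m-1}\xi_1,\qquad \xi_4^{2m}\le 25^{-m}\xi_1^{2m}\le \tfrac1{25}\,\xi_2^{2m-1}\xi_1 .$$
In the leftover term $(\tfrac45)^{2m}\xi_1^{2m}$ one should not bound $\xi_1^{2m}$ by $\xi_2^{2m-1}\xi_1$. Instead, split $2m\,\xi_2^{2m-1}\xi_1$ so that the constants combine. This is where the specific constant $2m+(\tfrac45)^{2m}-\tfrac{26}{25}$ should emerge: $-1$ comes from $\xi_1^{2m}$ and $-\tfrac1{25}$ from $\xi_4^{2m}$.

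\textbf{Expected obstacle.} The only delicate point is this constant bookkeeping in Step 5, in particular how much of the positive term $2m\,a\,\xi_2^{2m-1}$ is needed once $a$ is replaced by $\tfrac45\xi_1$.

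If the exact constant does not come out, I would settle for a bound of the form $|H_k|\ge c_m\,|\xi_2|^{2m-1}|\xi_1|$ with some explicit $c_m>0$. For $m\ge1$, since $2m\cdot\tfrac45-\tfrac{26}{25}>0$, such a $c_m$ is certainly available. This weaker form is all that is needed for the subsequent modulation argument in \textbf{Case E}, where it gives $\max_j\langle L_j\rangle\gtrsim |\xi_2|^{2m-1}|\xi_1|$.
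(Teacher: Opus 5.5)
Your Steps 1--4 are correct, and Step 1 supplies the sign argument that the paper's \eqref{3.25} uses tacitly. The plan in Step 5 cannot succeed, however, and this is not for lack of a clever splitting: \eqref{3.24} is false for $k=2$ and every $m\ge 2$. Take $\xi_1=1$, $\xi_4=-\frac{1}{5}$, $\xi_2=N\ge 10$, $\xi_3=-(N+\frac{4}{5})$; all Case E conditions hold. Then $H_2=(N+\frac{4}{5})^{2m}-N^{2m}-1-5^{-2m}=\frac{8m}{5}N^{2m-1}+O(N^{2m-2})$, while the claimed constant exceeds $\frac{8m}{5}$ by $\frac{2m}{5}+(\frac{4}{5})^{2m}-\frac{26}{25}>0$. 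For instance, $m=2$, $N=100$ gives $H_2\approx 3.239\times 10^{6}$, against the claimed lower bound $3.3696\times 10^{6}$. The paper's proof follows your route, but in the fourth line of \eqref{3.26} it writes the linear term of $(|\xi_2|+\frac{4}{5}|\xi_1|)^{2m}$ as $2m|\xi_2|^{2m-1}|\xi_1|$ instead of $\frac{8m}{5}|\xi_2|^{2m-1}|\xi_1|$. That inequality fails whenever $|\xi_1|\ll|\xi_2|$, which Case E permits. So the ``expected obstacle'' you flagged is exactly where the stated constant breaks.

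What your argument does establish is the corrected bound. Keep $a\ge\frac{4}{5}\xi_1$ in the linear term, and apply $\xi_1^{2m}\le\xi_2^{2m-1}\xi_1$ to the \emph{combined} coefficient $(\frac{4}{5})^{2m}-\frac{26}{25}$. No splitting is needed, because this net coefficient is negative. This gives $|H_k|\ge\big(\frac{8m}{5}+(\frac{4}{5})^{2m}-\frac{26}{25}\big)|\xi_2|^{2m-1}|\xi_1|$ for $k=0,1,2$, with a positive constant for every $m\ge1$. This is all that \eqref{3.27}--\eqref{3.28} actually use, since only $\langle H_k\rangle\gtrsim|\xi_2|^{2m-1}|\xi_1|$ enters there. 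You should therefore state and prove this version as the claim itself, rather than present it as a fallback.
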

\textbf{Proof of Claim \eqref{cla 3.1}} First, under the assumptions of \textbf{Case E} and recall that $\sum_{j=1}^4{\xi _j}=0$, $\left| \xi _1 \right|\le \left| \xi _2 \right|\le \left| \xi _3 \right|$, we have 
	\be\label{3.25}
	\left| \xi _3 \right|=\left| \xi _2 \right|+\left| \xi _1 \right|\pm \left| \xi _4 \right|\ge\left| \xi _2 \right|+\frac{4}{5}\left| \xi _1 \right|.
	\ee
	Consequently, we obtain the following lower bound for the resonance function.
	\be\label{3.26}
	\begin{aligned}
		\left| H_k\right|\ge \left| \xi _3 \right|^{2m}-\left| \xi _1 \right|^{2m}-\left| \xi _2 \right|^{2m}-\left| \xi _4 \right|^{2m}
		&\ge \left| \xi _3 \right|^{2m}-\left| \xi _2 \right|^{2m}-\frac{26}{25}\left| \xi _1 \right|^{2m}\\
		&\ge \left( \left| \xi _2 \right|+\frac{4}{5}\left| \xi _1 \right| \right) ^{2m}-\left| \xi _2 \right|^{2m}-\frac{26}{25}\left| \xi _1 \right|^{2m}\\
		&\ge 2m\left| \xi _2 \right|^{2m-1}\left| \xi _1 \right|+\left( \frac{4}{5} \right) ^{2m}\left| \xi _1 \right|^{2m}-\frac{26}{25}\left| \xi _1 \right|^{2m}\\
		&\ge \left( 2m+\left( \frac{4}{5} \right) ^{2m}-\frac{26}{25} \right) \left| \xi _2 \right|^{2m-1}\left| \xi _1 \right|.
	\end{aligned}
\ee
With this claim in hand, we now proceed to estimate $\varGamma _{1,E}$.
\subitem \textbf{Case E.(a)} {\boldmath$\left< L_1 \right> =\underset{j=1,2,3,4}{\max}\left< L_j \right>$}\quad In this subcase, the modulation exponent transfer gives,
\be\label{3.27}
\left< L_4 \right> ^{b}\prod_{j=1}^3{\left< L_j \right> ^{b^{\prime}}}\gtr\left< H_k \right> ^{\frac{m-1}{4m}}\prod_{j=2,3,4}{\left< L_j \right> ^{b^{\prime}+\frac{b}{3}-\frac{m-1}{12m}}},
\ee
where we used the relation $\left| H_k \right|\le \sum_{j=1}^4{\left| L_j \right|}\les\left| L_1 \right|$. For the multiplier $M$ we have,
\be\label{3.28}
\begin{aligned}
\frac{M}{\left< L_4 \right> ^b\prod_{j=1}^3{\left< L_j \right> ^{b^{\prime}}}}\les\frac{M}{\left< H_k \right> ^{\frac{m-1}{4m}}\prod_{j=2,3,4}{\left< L_j \right> ^{b^{\prime}+\frac{b}{3}-\frac{m-1}{12m}}}}&\les\frac{\left| \xi _1\xi _2\xi _3 \right|^{\frac{m-1}{2}}}{\left< H_k \right> ^{\frac{m-1}{4m}}\prod_{j=2,3,4}{\left< L_j \right> ^{b^{\prime}+\frac{b}{3}-\frac{m-1}{12m}}}}\\&\les\frac{\left| \xi _3 \right|^{m-\frac{1}{2}}}{\left| \xi _2\xi _4 \right|^{\frac{1}{4}}\prod_{j=2,3,4}{\left< L_j \right> ^{b^{\prime}+\frac{b}{3}-\frac{m-1}{12m}}}}.
\end{aligned}
\ee Combining these estimates and proceeding as in \textbf{Case D.(b)}, we obtain,
\be\label{3.29}
\varGamma _{1,E}\les\lVert F_{0}^{\left( 1 \right)} \rVert _{L_{x,t}^{2}}\lVert D_{x}^{-\frac{1}{4}}F_{\sigma_1}^{\left( 2 \right)} \rVert _{L_{x}^{4}L_{t}^{\infty}}\lVert D_{x}^{m-\frac{1}{2}}F_{\sigma_1}^{\left( 3 \right)} \rVert _{L_{x}^{\infty}L_{t}^{2}}\lVert D_{x}^{-\frac{1}{4}}F_{\sigma_1}^{\left( 4 \right)} \rVert _{L_{x}^{4}L_{t}^{\infty}}\les\prod_{j=1}^4{\lVert f_j \rVert _{L_{\xi ,\tau}^{2}}},
\ee
subject to the conditions,
\be\label{3.30}
\sigma_1\doteq b^{\prime}+\frac{b}{3}-\frac{m-1}{12m}>\frac{1}{2}.
\ee
The remaining subcases \textbf{E.(b)}, \textbf{E.(c)}, and \textbf{E.(d)} can be estimated similarly; we omit the details. We note that in \textbf{E.(c)}, the relation $\left| \xi _2 \right|\sim\left| \xi _3 \right|$ will be used to facilitate the exponent transfer.
\end{itemize}
To close the proof, we summarize the restrictions on the parameters. Collecting all the conditions \eqref{3.11}, \eqref{3.14}, \eqref{3.18}, and \eqref{3.30} from the cases above and recall $\epsilon=\frac{1}{9}-\frac{1}{36m}$, we find that \eqref{3.5} holds for any $b'<b$ satisfying,
\be\label{3.31}
\frac{7}{18}+\frac{1}{36m}<b^{\prime}<\fr{1}{2},\quad \fr{1}{3}+\fr{1}{6m}<b<\fr{1}{2},\quad b^{\prime}+\fr{b}{3}>\fr{7}{12}-\fr{1}{12m},
\ee
The proof is now complete.\end{proof}
\begin{rem}\label{rem 3.2}
	For the special case $m=1$, it suffices to take $\fr{3}{8}<b,b^{\prime}<\fr{1}{2}$. In fact, in this case \eqref{3.5} can be estimated directly by the following $L^4$ Strichartz estimate.
	\be\label{3.32}
	\varGamma _1\les\prod_{j=1}^3{\lVert F_{b^{\prime}}^{\left( j \right)} \rVert _{L_{t,x}^{4}}}\lVert F_{b}^{\left( 4 \right)} \rVert _{L_{t,x}^{4}}.
	\ee
	The estimate \eqref{3.6} can be handled similarly since we have used the bound $\left< \xi _4 \right> ^{-\frac{m-1}{2}}\les1$. This provides an alternative proof of the result of Himonas and Yan\cite{himonas2024schrodinger}.
\end{rem}
 Next, we proceed to prove the case $k=3$. We continue to use the notation from \eqref{3.7}, \eqref{3.9} and assume as before that $\left| \xi _1 \right|\le \left| \xi _2 \right|\le \left| \xi _3 \right|$. Let $p\doteq-s$. The nonlinearity $\bar u_1\bar u_2\bar u_3$ exhibits a stronger resonance, which allows the estimates to hold below $s_{m,1}$. More precisely, it can be checked that the resonance function $H_3$, defined in \eqref{3.23}, satisfies,
\be\label{3.33}
\left| H_3 \right|=\sum_{j=1}^4{\xi _j^{2m}}\gtrsim\underset{j=1,2,3,4}{\max}\xi _j^{2m}\gtrsim\xi _3^{2m}.
\ee This lower bound is sufficiently large to compensate for the derivative loss due to the lower regularity. We first consider the case $m=1$.
\begin{proof}[(\textbf{Proof of \eqref{3.5} for} {\boldmath$k=3,m=1$})] We will consider \textbf{Case (a)}-\textbf{Case (d)}.
	\begin{itemize}
		\begin{item} \textbf{Case (a)} {\boldmath$\left< L_1 \right> =\underset{j=1,2,3,4}{\max}\left< L_j \right>$.}\ First, we observe that,
			\be\label{3.34}
			\left< L_1 \right> ^{b^{\prime}}\sim\left< L_1 \right> ^{\frac{3p}{2}}\left< L_1 \right> ^{b^{\prime}-\frac{3p}{2}},\quad \prod_{j=1}^3{\left< L_j \right> ^{b^{\prime}}}\left< L_4 \right> ^b\ge \left< L_1 \right> ^{\frac{3p}{2}}\prod_{j=2,3,4}{\left< L_j \right> ^{b^{\prime}+\frac{b}{3}-\frac{p}{2}}}.
			\ee
			Moreover, the multiplier $M$ satisfies,
			\be\label{3.35}
			\frac{M}{\left< L_1 \right> ^{\frac{3p}{2}}}\les\frac{\prod_{j=1}^3{\left< \xi _j \right> ^{\frac{m-1}{2}}}}{\left< H \right> ^{\frac{3p}{2}}}\les\frac{\left< \xi _3 \right> ^{3p}}{\left< \xi _3 \right> ^{3p}}=1,
			\ee
			where we used $\left< \xi _3 \right> ^2\les\left< H_3 \right> \les \left< L_1 \right>$. Applying Hölder's inequality and the $L^6$ Strichartz estimate \eqref{2.30} yields,
			\be\label{3.36}
			\varGamma_{1}\les\prod_{j=2,3,4}{\lVert F_{\sigma_2}^{\left( j \right)} \rVert _{L_{t,x}^{6}}}\lVert F_{0}^{\left( 1 \right)} \rVert _{L_{t,x}^{2}}\les\prod_{j=1}^4{\lVert f_j \rVert _{L_{\tau,\xi}^{2}}},
			\ee
			provided that,
			\be\label{3.37}
			b^{\prime}>\frac{3p}{2},\qquad \sigma _2\doteq b^{\prime}+\frac{b}{3}-\frac{p}{2}>\frac{1}{2}.
			\ee
		\end{item}
	As in the proof for $k=0,1,2$, \textbf{Case (b)}-\textbf{Case (d)} can be proved similarly to \textbf{Case (a)}. We note that in \textbf{Case (d)}, the exponent transfer of $b$ requires the additional condition,
	\be\label{3.38}
	b>\frac{3p}{2}.
	\ee
	One can notice that \eqref{3.37}, \eqref{3.38} will force that $p<\fr{1}{3}$, due to $b,b'<\fr{1}{2}$. Recall that $p=-s$, thus, for $k=3$ and $m=1$, the estimate \eqref{3.5} holds as soon as,
	\be\label{3.39}
	b^{\prime},b>\frac{-3s}{2},\qquad  b^{\prime}+\frac{b}{3}>\frac{1-s}{2}.
	\ee
	\end{itemize}
This completes the proof.\end{proof}
\begin{proof}[(\textbf{Proof of \eqref{3.5} for} {\boldmath$k=3,\ m\ge2$})]
	For simplicity, we consider only the case $\left| \xi _3 \right|\ge 10, \left| \xi _2 \right|\ge 4$; the case $\left| \xi _3 \right|\le 10$ or $\left| \xi _2 \right|\le 4$ follows by the same arguments as in \textbf{Case A} and \textbf{Case B} of the proof for \(k=0,1,2\). We also consider \textbf{Cases (a)--(d)} below.
	\begin{itemize}
		\item \textbf{Case (a)} {\boldmath$\left< L_1 \right> =\underset{j=1,2,3,4}{\max}\left< L_j \right>$.}\ We begin with the observation that,
		\be\label{3.40}
		\langle L_1 \rangle^{b'} \sim \langle L_1 \rangle^{\frac{3p-m+1}{2m}}
		\langle L_1 \rangle^{b'-\frac{3p-m+1}{2m}},
		\ee
		and,
		\be\label{3.41}
		\prod_{j=1}^3 \langle L_j \rangle^{b'} \langle L_4 \rangle^b
		\ge \langle L_1 \rangle^{\frac{3p-m+1}{2m}}
		\prod_{j=2,3,4} \langle L_j \rangle^{b'+\frac{b}{3}-\frac{3p-m+1}{6m}}.
		\ee
		The multiplier $M$ satisfies the estimate,
		\be\label{3.42}
		\frac{M}{\langle L_1 \rangle^{\frac{3p-m+1}{2m}}}
		\lesssim \frac{\prod_{j=1}^3 \langle \xi_j \rangle^p}{\langle \xi_3 \rangle^{3p-m+1}}
		\lesssim \langle \xi_2 \rangle^{\frac{m-1}{2}} \langle \xi_3 \rangle^{\frac{m-1}{2}}
		\sim |\xi_2 \xi_3|^{\frac{m-1}{2}},
		\ee
		where we used the relations $\left| \xi_3 \right|^{2m} \lesssim \left| H_3 \right| \lesssim \left| L_1 \right|$ and $|\xi_3| \ge |\xi_2| \gtrsim 1$. Applying the Strichartz estimates \eqref{2.28} and \eqref{2.29} then gives,
		\be\label{3.43}
		\Gamma_1 \lesssim \|F_0^{(1)}\|_{L_{t,x}^2}
		\|D_x^{\frac{m-1}{2}} F_{\sigma_3}^{(2)}\|_{L_t^4 L_x^\infty}
		\|D_x^{\frac{m-1}{2}} F_{\sigma_3}^{(3)}\|_{L_t^4 L_x^\infty}
		\|F_{\sigma_3}^{(4)}\|_{L_t^\infty L_x^2}
		\lesssim \prod_{j=1}^4 \|f_j\|_{L_{\xi,\tau}^2},
		\ee
		provided that,
		\be\label{3.44}
		b' > \frac{3p-m+1}{2m}, \qquad
		\sigma_3 \doteq b' + \frac{b}{3} - \frac{3p-m+1}{6m} > \frac{1}{2}.
		\ee
		The remaining cases \textbf{Cases (b)--(d)} are similar to \textbf{Case (a)}, with the same modulation transfer argument. Two differences are worth noting: in \textbf{Case (c)}, $\left| \xi _4 \right|\sim\left| \xi _3 \right|$ or $\left| \xi _2 \right|\sim\left| \xi _3 \right|$, which permits an exponent transfer between the two factors; in \textbf{Case (d)}, the exponent transfer of \(b\) requires the following extra condition,
		\be\label{3.45}
		b > \frac{3p-m+1}{2m}. 
		\ee
     Since \(p = -s\), for \(k=3\) and \(m\ge2\), the estimate \eqref{3.5} holds whenever,
		\be\label{3.46}
		\frac{1-m-3s}{2m}<b', b<\frac{1}{2}, \qquad
		b' + \frac{b}{3} > \frac{1+2m-3s}{6m}.
		\ee
		This condition reduces to the one, \eqref{3.39}, obtained earlier when $m=1$.
	\end{itemize}
	
	The proof is now complete.
\end{proof}
\begin{rem}
	In the proof we have used the bound $\left< \xi _4 \right> ^p\gtrsim1$, thereby neglecting the influence of $\xi_4$. We note that a refined treatment of this term may yield sharper estimates, although this seems unlikely. In fact, compared with the result of Himonas and Yan\cite{himonas2024schrodinger}, this estimate may already be sharp. 
\end{rem}
\begin{rem}
	Due to \eqref{3.46}, $b', b<\frac{1}{2}$, we can not achieve $s=-\fr{2m-1}{3}$.
\end{rem}
\subsection{Proof of the temporal Bourgain spaces estimates \eqref{3.6}}
\hspace{1em}
To prove \eqref{3.6}, we consider two regions.
\be\label{3.47}
\varOmega _1\doteq\left\{ \left( \vec{\xi},\vec{\tau} \right) \in \mathbb{R}^4\times \mathbb{R}^4|\left| \tau _4 \right|\ge \frac{\left| \xi _4 \right|^{2m}}{4} \right\},\quad \varOmega _2\doteq\left\{ \left( \vec{\xi},\vec{\tau} \right) \in \mathbb{R}^4\times \mathbb{R}^4|\left| \tau _4 \right|\le \frac{\left| \xi _4 \right|^{2m}}{4} \right\}.
\ee
In the region $\varOmega _1$, the spatial Bourgain spaces estimates \eqref{3.5} is stronger than the temporal Bourgain spaces estimates \eqref{3.6}, while in the region $\varOmega _2$, it is weaker than \eqref{3.6}. More precisely, in $\varOmega _1$, we have $\left< \tau _4 \right> ^{\frac{-s}{2m}}\gtrsim\left< \xi _4 \right> ^{-s}$, so that $\varGamma_2\les\varGamma_1$, by \eqref{3.5} and \eqref{3.6}, hence, \eqref{3.6} follows as a corollary of \eqref{3.5}. In $\varOmega _2$, thanks to the condition $\left| \tau _4 \right|\le \frac{\left| \xi _4 \right|^{2m}}{4}$, we have the additional relation $\left< L_4 \right> \sim\left< \xi _4 \right> ^{2m}$. We therefore focus solely on $\varOmega _2$.
\begin{proof}[(\textbf{Proof of \eqref{3.6} for} {\boldmath$k=0,1,2$} \textbf{and} {\boldmath$m\ge2$})]
 The case for $k=0,1,2$ and $m=1$ follows from the argument in Remark \ref{rem 3.2}, we therefore treat $m\ge2$ below. As before, we consider \textbf{Case A} to \textbf{Case E}. 
 Note that for any $s\ge s_{m,k}$, since $\left| \tau _4 \right|\le \frac{\left| \xi _4 \right|^{2m}}{4}$, we have,
 $$\frac{\prod_{j=1}^3{\left< \xi _j \right> ^{-s}}}{\left< \tau _4 \right> ^{\frac{-s}{2m}}}=\frac{\prod_{j=1}^3{\left< \xi _j \right> ^{-s_{m,k}}}}{\left< \tau _4 \right> ^{\frac{-s_{m,k}}{2m}}}\left( \frac{\left< \tau _4 \right>}{\prod_{j=1}^3{\left< \xi _j \right> ^{2m}}} \right) ^{\frac{s-s_{m,k}}{2m}}\les\frac{\prod_{j=1}^3{\left< \xi _j \right> ^{-s_{m,k}}}}{\left< \tau _4 \right> ^{\frac{-s_{m,k}}{2m}}}\left( \frac{\left< \xi _4 \right> ^{2m}}{\prod_{j=1}^3{\left< \xi _j \right> ^{2m}}} \right) ^{\frac{s-s_{m,k}}{2m}}.$$ The factor in parenthesis is bounded; hence, we only need to treat $s=s_{m,k}$. Nevertheless, in \textbf{Case A}, \textbf{Case C} and \textbf{Case E}, we have used the bound $\left< \xi _4 \right> ^{-\frac{m-1}{2}}\les1$, so that the same arguments as above apply. Thus it remains to treat only \textbf{Case B} and \textbf{Case D} in $\varOmega _2$. Following the same pattern as above, we consider the subcases \textbf{(a)} to \textbf{(d)} separately. Recall that $K_4$ was decomposed into the regions $A$, $B$, $C$, $D$. We now define the corresponding localized integrals and a new multiplier,
\be\label{3.48}
\varGamma _{2,\varOmega _2,B}\doteq\left| \int_{K_4\cap \varOmega _2\cap B}{\frac{\prod_{j=1}^3{\left< \xi _j \right> ^{\frac{m-1}{2}}}\prod_{j=1}^4{f_j\left( \xi _j,\tau _j \right)}}{\left< \tau _4 \right> ^{\frac{m-1}{4m}}\left< L_4 \right> ^{b}\prod_{j=1}^3{\left< L_j \right> ^{b^{\prime}}}}} \right|,
\ee
\be\label{3.49}
\varGamma _{2,\varOmega _2,D}\doteq\left| \int_{K_4\cap \varOmega _2\cap D}{\frac{\prod_{j=1}^3{\left< \xi _j \right> ^{\frac{m-1}{2}}}\prod_{j=1}^4{f_j\left( \xi _j,\tau _j \right)}}{\left< \tau _4 \right> ^{\frac{m-1}{4m}}\left< L_4 \right> ^{b}\prod_{j=1}^3{\left< L_j \right> ^{b^{\prime}}}}} \right|,
\ee
\be\label{3.50}
M\doteq\frac{\prod_{j=1}^3{\left< \xi _j \right> ^{\frac{m-1}{2}}}}{\left< \tau _4 \right> ^{\frac{m-1}{4m}}}\les\prod_{j=1}^3{\left< \xi _j \right> ^{\frac{m-1}{2}}}.
\ee
\begin{itemize}
	\item \textbf{Case B.} {\boldmath $\left| \xi _3 \right|\ge 10, \left| \xi _2 \right|\le 4$}\quad We take advantage of the relation $\left< L_4 \right> \sim\left< \xi _4 \right> ^{2m}$ by writing,
	\be\label{3.51}
	\left< L_4 \right> ^b\sim\left< L_4 \right> ^{b-\frac{m-1}{4m}}\left< \xi _4 \right> ^{2m\cdot \frac{m-1}{4m}},
	\ee
	so that the bound, 
	\be\label{3.52}
	\frac{M}{\left< \xi _4 \right> ^{\frac{m-1}{2}}}\les\frac{\left< \xi _3 \right> ^{\frac{m-1}{2}}}{\left< \xi _4 \right> ^{\frac{m-1}{2}}}\sim1,
	\ee
	holds, where we used $\left| \xi _4 \right|\sim\left| \xi _3 \right|$. Then the estimate follows as in \eqref{3.10} with the additional condition,
	\be\label{3.53}
	b-\frac{m-1}{4m}>\fr{2m+1}{8m}.
	\ee
	\item\textbf{Case D.} {\boldmath $\left| \xi _3 \right|\ge 10, \left| \xi _2 \right|\ge 4, \left| \xi _1 \right|\ge 1, \left| \xi _4 \right|\ge \frac{\left| \xi _1 \right|}{5}$,}
	\subitem \textbf{Case D.(a)} {\boldmath$\left< L_1 \right> =\underset{j=1,2,3,4}{\max}\left< L_j \right>$}\quad We first observe that,
	\be\label{3.54}
	\left< L_4 \right> ^{b}\sim\left< \xi _4 \right> ^{2m\cdot \frac{m-1}{4m}}\left< L_4 \right> ^{b -\frac{m-1}{4m}},\quad \frac{M}{\left< \xi _4 \right> ^{\frac{m-1}{2}}}\les\left< \xi _2 \right> ^{\fr{m-1}{2}}\left< \xi _3 \right> ^{\fr{m-1}{2}}\les\left< \xi _3 \right> ^{m-1}\les\frac{\left| \xi _3 \right|^{m-\frac{1}{2}}}{\left| \xi _2\xi _4 \right|^{\frac{1}{4}}}.
	\ee
	Moreover, by a modulation exponent transfer, we have,
	\be\label{3.55}
	\left< L_4 \right> ^{b -\frac{m-1}{4m}}\prod_{j=1}^3{\left< L_j \right> ^{b^{\prime}}}\gtrsim\prod_{j=2,3,4}{\left< L_j \right> ^{b^{\prime}+\frac{b}{3}-\frac{m-1}{12m}}}.
	\ee
	 Combining these estimates yields,
	\be\label{3.56}
	\varGamma _{2,\varOmega _2,D}\les\lVert F_{0}^{\left( 1 \right)} \rVert _{L_{x,t}^{2}}\lVert D_{x}^{-\frac{1}{4}}F_{\sigma_1}^{\left( 2 \right)} \rVert _{L_{x}^{4}L_{t}^{\infty}}\lVert D_{x}^{m-\frac{1}{2}}F_{\sigma_1}^{\left( 3 \right)} \rVert _{L_{x}^{\infty}L_{t}^{2}}\lVert D_{x}^{-\frac{1}{4}}F_{\sigma_1}^{\left( 4 \right)} \rVert _{L_{x}^{4}L_{t}^{\infty}}\les\prod_{j=1}^4{\lVert f_j \rVert _{L_{\xi ,\tau}^{2}}},
	\ee
	provided that,
	\be\label{3.57}
 \sigma_1\doteq b^{\prime}+\frac{b}{3}-\frac{m-1}{12m} >\fr{1}{2}.
	\ee
	The remaining subcases \textbf{ D.(b)}, \textbf{ D.(c)}, and \textbf{ D.(d)} can be proved by similar arguments; we omit the details. We point out, however, that in \textbf{Case D.(c)}, the relation $\left| \xi _3 \right|\sim\left| \xi _2 \right|$ or $\left| \xi _3 \right|\sim\left| \xi _4 \right|$ is needed to transfer the modulation exponents.\\ We therefore conclude that the temporal Bourgain space estimate \eqref{3.6} is valid under the conditions \eqref{3.31}, \eqref{3.53} and \eqref{3.57}. More precisely, for $k=0,1,2$, the spatial Bourgain spaces estimate \eqref{3.5} and the temporal Bourgain space estimate \eqref{3.6} hold for any $b$, $b^{\prime}$ satisfying
	\be\label{3.58}
	\frac{7}{18}+\frac{1}{36m}<b^{\prime}<\fr{1}{2},\quad \fr{1}{2}-\fr{1}{8m}<b<\fr{1}{2},\quad b^{\prime}+\fr{b}{3}>\fr{7}{12}-\fr{1}{12m}.
	\ee\end{itemize}  This completes the proof. \end{proof}
\begin{proof}[(\textbf{Proof of \eqref{3.6} for} {\boldmath$k=3$})] Finally, we note that the bound $\left< \xi _4 \right> ^{-s}\les1$ is used throughout the proof of \eqref{3.5} for $k=3$. The proof of \eqref{3.6} follows by the same argument since $\left< \tau _4 \right> ^{\frac{-s}{2m}}\gtrsim1$, so we do not repeat it. Now we can conclude that for $k=3$, the spatial Bourgain spaces estimate \eqref{3.5} and the temporal Bourgain space estimate \eqref{3.6} hold for any $b'<b$ satisfying
	\be\label{3.59}
	\frac{1-m-3s}{2m}<b'<b<\frac12,
	\qquad
	b'+\frac{b}{3}>\frac{2m+1-3s}{6m}.
	\ee
	The proof is now complete.\end{proof}
\section{Proof of well-posedness on the half line, Theorem \ref{thm 1.5}}\label{sec 4}
\hspace{1em}
Using the Fokas formula, the known linear estimates(see Lemma \ref{lem 4.1}), and our new trilinear estimates, the standard contraction mapping argument gives local well-posedness of the IBVP. A proof is included for completeness. First, we recall some existing results from the literature, including the solution of the linear IVP and IBVP, as well as the corresponding linear estimates, which play a key role in the proof of well-posedness. We begin by introducing the linear IBVP \eqref{1.5} with forcing $f$.
\be\label{4.1}
\left\{ \begin{array}{ll}
	iu_t+\left( -1 \right) ^{m+1}\partial _{x}^{2m}u=f\left(x,t\right),\ x\in\mathbb{R}^{+},\ t\in\left(0,T\right),\\
	u\left( x,0 \right) =u_0\left( x \right) ,\\
	u\left( 0,t \right) =g_0\left( t \right) ,\ \partial _{x}^{j}u\left( 0,t \right) =g_j\left( t \right) ,\ j=1,2,...,m-1.\\
\end{array} \right.
\ee
Next, we present the linear estimate with the following lemma established in \cite{himonas2024higher}.
\begin{lem}\cite{himonas2024higher,himonas2026higher}\label{lem 4.1}
	Suppose that
	$-m - \frac{1}{2} < s \le0$, $0 < b < \frac{1}{2}$.
	Then, \eqref{4.1} admits a solution 
	\(u \in X_{\mathbb{R}^{+},T}^{s,b}\), which satisfies the estimate
	\be\label{4.2}
	\begin{aligned}
		&\| W[u_0,g_0,\dots,g_{j-1};f] \|_{X_{\mathbb{R}^{+},T}^{s,b}} \\
		\les& \Big[
		\| u_0 \|_{H_x^s(\mathbb{R}^{+})} 
		+ \sum_{j=0}^{m-1} \| g_\ell \|_{H_t^{\frac{2s+2m-1-2j}{4m}}(0,T)}
		+ \| f \|_{X_{\mathbb{R}^{+},T}^{s,-b}}
		+ \| f \|_{Y_{\mathbb{R}^{+},T}^{s,-b}}
		\Big],
	\end{aligned}
	\ee
	where \(Y^{s,b}\) are the temporal Bourgain spaces defined by the norm in \eqref{1.11}.
\end{lem}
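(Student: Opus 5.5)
The plan is to reduce \eqref{4.1} to whole-line problems plus a pure boundary-value problem, following Himonas--Yan \cite{himonas2024higher,himonas2026higher}. First, extend $u_0\in H^s(\mathbb{R}^+)$ to $U_0\in H^s(\mathbb{R})$ with $\|U_0\|_{H^s(\mathbb{R})}\les\|u_0\|_{H^s(\mathbb{R}^+)}$; this is possible since $s>-m-\frac12$ and the extension is bounded in this range. Likewise extend $f$ to a function $F$ on $\mathbb{R}^2$ with $\|F\|_{X^{s,-b}}+\|F\|_{Y^{s,-b}}\les \|f\|_{X^{s,-b}_{\mathbb{R}^+,T}}+\|f\|_{Y^{s,-b}_{\mathbb{R}^+,T}}$, using the definition \eqref{1.27} of the quotient norms. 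Then write
\[
u=\psi(t)S(t)U_0-i\psi(t)\int_0^tS(t-t')F(t')\,dt'+V,
\]
where $\psi$ is a time cutoff. Here $V$ solves the homogeneous equation on $\mathbb{R}^+$ with zero initial data and boundary data
\[
h_j=g_j-\partial_x^j\bigl(\text{whole-line part}\bigr)\big|_{x=0},\qquad j=0,\dots,m-1.
\]

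\textbf{Whole-line pieces.} The free evolution bound $\|\psi S(t)U_0\|_{X^{s,b}}\les\|U_0\|_{H^s}$ is standard. For the Duhamel term, the bound in $X^{s,b}$ by $\|F\|_{X^{s,-b}}$ with $0<b<\frac12$ follows from the usual Kenig--Ponce--Vega argument. One expands $\frac{e^{it\tau}-e^{-it\xi^{2m}}}{\tau+\xi^{2m}}$ and splits $|\tau+\xi^{2m}|\le1$ from $|\tau+\xi^{2m}|\ge1$; because $b<\frac12$ we gain the room needed for the low-modulation part.

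Next we need the time-trace estimates, i.e.\ $\partial_x^j(\cdot)|_{x=0}\in H_t^{\frac{2s+2m-1-2j}{4m}}$. For the free part this is the Kato smoothing estimate in the spirit of Theorem \ref{thm 2.6}, with the change of variables $\eta=\xi^{2m}$ on each half-line. For the Duhamel part, the trace estimate cannot be closed using $X^{s,-b}$ alone when $b<\frac12$. In the region $|\tau|\gg|\xi|^{2m}$ one needs the weight $\langle\tau\rangle^{s/2m}$, and this is precisely why the $Y^{s,-b}$ norm \eqref{1.11} appears. So I would split the $(\xi,\tau)$ space into $|\tau|\lesssim\xi^{2m}$ and $|\tau|\gg\xi^{2m}$: control the first region by $X^{s,-b}$ and the second by $Y^{s,-b}$.

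\textbf{Boundary part (main obstacle).} To obtain $V$, I would use the Fokas unified transform formula. This writes $V$ as contour integrals over the boundary of the regions $\{\mathrm{Re}(ik^{2m})<0\}$ in the upper half plane of the time-transforms $\tilde h_j$ of the data, which are assumed compactly supported in $(0,2)$ after cutting off. Solving the global relation expresses the unknown boundary values through the $m$ given ones via the $2m$-th roots of unity. The integrand then splits into rays: along the real ray we get a genuine $S(t)$-type solution, while along the complex rays we get exponentially decaying kernels $e^{ikx}$, $\mathrm{Im}\,k>0$.

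For each ray, I would change variables $\tau=k^{2m}$ and estimate the $X^{s,b}$ norm, with the cutoff in time, by $\sum_j\|h_j\|_{H^{\frac{2s+2m-1-2j}{4m}}}$. On the real ray this follows as in the free estimate. On the complex rays, where the solution is not in $X^{s,b}$ globally, one uses a Paley--Wiener/Hardy-space bound for $x\mapsto e^{ikx}\chi_{x>0}$ together with an $L^2$ estimate in $\tau$. I expect this step, together with the handling of the range $-m-\frac12<s\le0$ where no compatibility conditions are needed and where the negative Sobolev weights on the data must be transferred through the change of variables, to be the technical core. Finally, summing the three contributions and restricting to $\mathbb{R}^+\times(0,T)$ gives \eqref{4.2}.
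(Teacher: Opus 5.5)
Your outline is the Himonas--Yan construction that the paper invokes only by citation: extend the data, treat the whole-line free and Duhamel pieces, then correct the boundary values with the Fokas formula for data $h_j$. The architecture is right. The step that fails as written is where you bring in $Y^{s,-b}$: in the range $s\le 0$ of this lemma your region assignment is reversed.

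If $|\tau|\gtrsim\xi^{2m}$, then $\langle\tau\rangle\gtrsim\langle\xi\rangle^{2m}$, so $\langle\tau\rangle^{s/2m}\lesssim\langle\xi\rangle^{s}$. There $Y^{s,-b}$ is the \emph{weaker} norm and cannot be what closes the trace estimate. On its own it does not even control the free-evolution part $\hat w(\xi)=\int\hat F(\xi,\tau)(\tau+\xi^{2m})^{-1}\,d\tau$ in that region once $2b\ge 1+\frac{s}{m}$.

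The real obstruction lies in $|\tau|\ll\xi^{2m}$, which you assign to $X^{s,-b}$. Set $\sigma_j=\frac{2s+2m-1-2j}{4m}$. The $x=0$ trace of $\partial_x^j$ of the high-modulation Duhamel term has a part whose time Fourier transform is, up to convolution with $\hat\psi$, $\int(i\xi)^j\hat F(\xi,\tau)(\tau+\xi^{2m})^{-1}\,d\xi$. Bounding this by the $X^{s,-b}$ weight for bounded $\tau$ requires $|\xi|^{-s+j+2m(b-1)}\in L^2(|\xi|\ge1)$, i.e.\ $s>2mb-2m+j+\frac12$. For $j=m-1$ this reads $s>2mb-m-\frac12$. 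That fails for every $s<-\frac12$ once $b$ is close to $\frac12$, in particular at $s=-\frac{m-1}{2}$ for all $m\ge 3$ with the paper's choice $b>\frac12-\frac1{8m}$.

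This is not a Cauchy--Schwarz artifact. Take $\hat F(\xi,\tau)=\hat\phi(\tau)\hat g(\xi)$ with $\hat\phi$ supported in $|\tau|\le1$ and $\hat g=N^{-1/2}\chi(\xi/N)$. The trace then contains a low-frequency term $c\,\psi(t)\phi(t)\int\xi^{j-2m}\hat g(\xi)\,d\xi$ of size $N^{\frac12+j-2m}$, while $\|F\|_{X^{s,-b}}\sim N^{s-2mb}$.

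In this region $\langle\tau\rangle^{s/2m}\gtrsim\langle\xi\rangle^{s}$, so $Y^{s,-b}$ is the stronger norm. With its weight the same computation gives $\langle\tau\rangle^{2\sigma_j-\frac{s}{m}}\int_{|\xi|^{2m}\gg|\tau|}|\xi|^{2j}\langle\tau+\xi^{2m}\rangle^{2b-2}\,d\xi\lesssim\langle\tau\rangle^{2b-1}$, uniformly in $\tau$. So the split must be $X^{s,-b}$ on $|\tau|\gtrsim\xi^{2m}$ and $Y^{s,-b}$ on $|\tau|\ll\xi^{2m}$. The heuristic you state is the one that applies for large positive $s$, not here.

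Two smaller points. First, with the usual restriction norm on $H^s(\mathbb{R}^+)$, extending $u_0$ needs no condition on $s$. The hypothesis $s>-m-\frac12$ acts on the boundary side: together with $s\le0$ it puts every $\sigma_j$ in $(-\frac12,\frac12)$, the extremes being $\sigma_{m-1}=\frac{2s+1}{4m}$ and $\sigma_0=\frac{2s+2m-1}{4m}$. That is what lets you cut off the adjusted data $h_j$ and extend them by zero to $t<0$, as the transforms $\tilde h_j$ in the Fokas formula require, without compatibility conditions.

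Second, \eqref{1.27} gives separate near-optimal extensions for $X^{s,-b}$ and for $Y^{s,-b}$, not one $F$ controlling both. Your simultaneous extension of $f$ therefore does not follow from the definition. This is harmless for the paper, since \eqref{4.9} bounds both norms through the single extension $N_k(\psi_{2T^*}\tilde u,\tilde u,\tilde u)$. It suffices to prove \eqref{4.2} with $\|F\|_{X^{s,-b}}+\|F\|_{Y^{s,-b}}$ on the right for an arbitrary extension $F$ of $f$.
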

To close the contraction mapping argument, we need a multiplier estimate.
\begin{lem}\cite{himonas2024higher,himonas2026higher}\label{lem 4.2}
	Let $\eta(t)$ be a function in the Schwartz space $\mathcal{S}(\mathbb{R})$. 
	If $-\frac{1}{2} < b' \le b < \frac{1}{2}$,
	then for any $0 < T^* \le 1$ we have
	\be\label{4.3}
	\bigl\| \eta(\fr{t}{T^*}) \, u \bigr\|_{X^{s,b'}}
	\les {T^*}^{b - b'} \,
	\| u \|_{X^{s,b}}.
	\ee
\end{lem}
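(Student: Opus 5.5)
The statement only involves a cutoff in time, so the plan is to strip away everything else and reduce to a one-dimensional estimate in $t$.

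\textbf{Reduction.} Since $\eta(t/T^*)$ acts only in $t$, it commutes with $J_x^s$, and this disposes of $s$. Set $v(t)\doteq S(-t)u(t)$. Then
\[
\|u\|_{X^{s,b}}=\big\|\langle\xi\rangle^s\,\|\widehat{v}^{(x)}(\xi,\cdot)\|_{H^b_t}\big\|_{L^2_\xi},
\]
and $S(-t)$ commutes with multiplication by $\eta(t/T^*)$. Hence it suffices to prove, for functions $f=f(t)$ of one variable and with a constant uniform in $0<T^*\le1$,
\begin{equation}\label{plan-scalar}
\|\eta_{T^*}f\|_{H^{b'}(\mathbb{R})}\les {T^*}^{b-b'}\|f\|_{H^b(\mathbb{R})},\qquad \eta_{T^*}(t)\doteq\eta(t/T^*).
\end{equation}
Applying \eqref{plan-scalar} for each fixed $\xi$ and integrating in $\xi$ then gives \eqref{4.3}.

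\textbf{Step 1: the endpoint $b'=b$, i.e.\ uniform boundedness of multiplication by $\eta_{T^*}$ on $H^b$ for $|b|<\tfrac12$.} By duality it is enough to treat $0\le b<\tfrac12$. The $L^2$ part is trivial. For the homogeneous part I would use
\[
\|g\|_{\dot H^b}^2\sim\iint\frac{|g(t)-g(\tau)|^2}{|t-\tau|^{1+2b}}\,dt\,d\tau
\]
and split
\[
\eta_{T^*}(t)f(t)-\eta_{T^*}(\tau)f(\tau)=\eta_{T^*}(t)\big(f(t)-f(\tau)\big)+f(\tau)\big(\eta_{T^*}(t)-\eta_{T^*}(\tau)\big).
\]
The first term is bounded by $\|\eta\|_{L^\infty}^2\|f\|_{\dot H^b}^2$. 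The second term gives $\int|f(\tau)|^2 w(\tau)\,d\tau$ with
\[
w(\tau)=\int\frac{|\eta_{T^*}(t)-\eta_{T^*}(\tau)|^2}{|t-\tau|^{1+2b}}\,dt .
\]
By scaling, $w(\tau)={T^*}^{-2b}w_1(\tau/T^*)$, and $w_1(\sigma)\les\langle\sigma\rangle^{-2b}$ (for $|\sigma|$ large use $|\eta(t)-\eta(\sigma)|\les|\eta(t)|+|\eta(\sigma)|$). Thus $w(\tau)\les|\tau|^{-2b}$ uniformly in $T^*$, and Hardy's inequality $\||\tau|^{-b}f\|_{L^2}\les\|f\|_{\dot H^b}$, valid exactly because $b<\tfrac12$, closes the bound. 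I expect this step to be the main obstacle: it is the only place where the restriction $|b|<\tfrac12$ is essential, and the uniformity in $T^*$ has to be tracked carefully through the Hardy inequality.

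\textbf{Step 2: the endpoint $b'=0\le b<\tfrac12$.} By H\"older and Sobolev,
\[
\|\eta_{T^*}f\|_{L^2}\le\|\eta_{T^*}\|_{L^{1/b}}\|f\|_{L^{2/(1-2b)}}\les {T^*}^{b}\|f\|_{H^b}.
\]
Complex interpolation between Step 1 (exponent $b$, no gain) and Step 2 (exponent $0$, gain ${T^*}^b$) then gives \eqref{plan-scalar} for all $0\le b'\le b<\tfrac12$.

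\textbf{Step 3: negative exponents.} For $-\tfrac12<b'\le b\le0$, use duality: the adjoint of multiplication by $\eta_{T^*}$ is multiplication by $\overline{\eta}_{T^*}$, and the already-proved range applies with $(-b,-b')$ in place of $(b',b)$. For the mixed range $b'<0<b$, write $\eta=\eta_1\eta_2$ with $\eta_1,\eta_2$ Schwartz (for instance, take $\eta_2$ Schwartz and equal to $1$ on a neighbourhood of the support of a compactly supported approximation of $\eta$, or factor through a Schwartz square root). Then go $H^b\to L^2$ via $\eta_2$, gaining ${T^*}^{b}$, and $L^2\to H^{b'}$ via $\eta_1$, gaining ${T^*}^{-b'}$ by the dual of Step 2. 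The total gain is ${T^*}^{b-b'}$. If the factorization is awkward for a general Schwartz $\eta$, I would instead prove the mixed case directly through the dual form of Step 2 combined with Step 1.
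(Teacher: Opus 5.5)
The paper gives no proof of this lemma. It is quoted from Himonas--Yan as the standard time-localization estimate, and in Section~4 it is only invoked with $0<b'<b<\frac12$. Your argument therefore supplies a proof the paper omits, and on that range it is correct:
\begin{itemize}
\item conjugating by $S(-t)$ reduces everything to the scalar $H^b_t$ bound;
\item the Gagliardo-seminorm, scaling and fractional Hardy argument gives the uniform multiplier bound, and shows exactly where $b<\frac12$ enters;
\item H\"older plus Sobolev at scale $T^*$ gives the gain at $b'=0$;
\item interpolating these two gives $0\le b'\le b<\frac12$, and duality then covers $-\frac12<b'\le b\le 0$.
\end{itemize}
One slip in Step~1: the crude bound $|\eta(t)-\eta(\sigma)|\le|\eta(t)|+|\eta(\sigma)|$ may only be used on $|t-\sigma|\ge1$, since near the diagonal it makes the $t$-integral diverge. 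There you need the mean value theorem, $|\eta(t)-\eta(\sigma)|\lesssim\langle\sigma\rangle^{-N}|t-\sigma|$.

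The real loose end is the mixed range $b'<0<b$.
\begin{itemize}
\item Neither suggested factorization works as stated. A Schwartz function with a simple zero, e.g.\ $te^{-t^2}$, has no $C^1$ square root. Splitting off a compactly supported approximation leaves a remainder that is again a general Schwartz function.
\item The fallback cannot be pure interpolation. Every pair in the two ranges you have proved satisfies $b-b'<\frac12$, and interpolation preserves this. So, for example, $(b,b')=(0.4,-0.4)$ is out of reach.
\end{itemize}
The fix is easy because Schwartz factors are not needed. Step~2 and its dual only use $L^p$-integrability of the cutoff, so $\eta=(\eta\langle t\rangle)\cdot\langle t\rangle^{-1}$ already works. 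More directly, for all $b'\le 0\le b$ at once,
\[
\|\eta_{T^*}f\|_{H^{b'}}\lesssim\|\eta_{T^*}f\|_{L^{\frac{2}{1-2b'}}}\le\|\eta_{T^*}\|_{L^{\frac{1}{b-b'}}}\|f\|_{L^{\frac{2}{1-2b}}}\lesssim {T^*}^{\,b-b'}\|\eta\|_{L^{\frac{1}{b-b'}}}\|f\|_{H^{b}}.
\]
The steps are:
\begin{itemize}
\item the dual Sobolev embedding, valid since $-\frac12<b'\le0$;
\item H\"older with $\frac{1-2b'}{2}=(b-b')+\frac{1-2b}{2}$;
\item the scaling $\|\eta_{T^*}\|_{L^r}={T^*}^{1/r}\|\eta\|_{L^r}$ with $r=\frac{1}{b-b'}>1$;
\item Sobolev.
\end{itemize}
This contains your Step~2 as the case $b'=0$.
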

	\begin{proof}[(\textbf{Proof of Theorem \ref{thm 1.5}})]
		Let $m\ge 1$ be an integer and let $s$ satisfy $\eqref{1.28}$. Choose $b,b'$ as in Theorem \ref{thm 1.4} and set
		$\delta\doteq b-b'>0$; by Remark \ref{rem 1.5}, for $k=0,1,2$ the gap $\delta$ can be
		fixed as a constant independent of the data, while for $k=3$ one has
		$0<\delta<\frac{2m-1+3s}{6m}$. We assume $0<T\le 1$. Let $c_1,c_2,c_3$ denote the
		constants in Lemma \ref{lem 4.1}, Theorem \ref{thm 1.4} and Lemma \ref{lem 4.2}, respectively, and
		set, as in \eqref{1.29},
		\be\label{4.4}
		R\doteq\|u_0\|_{H^{s}(\mathbb{R}^+)}
		+\sum_{j=0}^{m-1}\|g_j\|_{H^{\frac{2s+2m-1-2j}{4m}}(0,T)}.
		\ee
		For $0<T^*\le T$, we define the iteration map $\Phi_{T^*}$ by replacing the forcing $f$
		in \eqref{4.1} with the nonlinearity localized by
		$\psi_{2T^*}(t)=\psi(\tfrac{t}{2T^*})$, where $\psi\in C_c^\infty(-1,1)$ and
		$\psi\equiv 1$ on $[-\tfrac12,\tfrac12]$:
		\be\label{4.5}
		\Phi_{T^*}u\doteq W\big[u_0,g_0,\dots,g_{m-1};\ \psi_{2T^*}(t)N_k(u,u,u)\big],
		\qquad (x,t)\in\mathbb{R}^+\times(0,T).
		\ee
		We will show that $\Phi_{T^*}$ is a contraction on the closed ball,
		\be\label{4.6}
		B(r):=\Big\{u\in X^{s,b}_{\mathbb{R}^+,T}:\ \|u\|_{X^{s,b}_{\mathbb{R}^+,T}}\le r\Big\},
		\qquad r\doteq2c_1R ,
		\ee
		of the Banach space $X^{s,b}_{\mathbb{R}^+,T}$.\\
		\medskip
		\noindent\textbf{Step 1 ($\Phi_{T^*}$ maps $B(r)$ into itself).}
		By the assumptions on $s$, the linear estimate \eqref{4.2} gives,
		\be\label{4.7}
		\|\Phi_{T^*}u\|_{X^{s,b}_{\mathbb{R}^+,T}}
		\le c_1\Big(R+\|\psi_{2T^*}N_k(u,u,u)\|_{X^{s,-b}_{\mathbb{R}^+,T}}
		+\|\psi_{2T^*}N_k(u,u,u)\|_{Y^{s,-b}_{\mathbb{R}^+,T}}\Big).
		\ee
		Choose an extension $\tilde u\in X^{s,b}(\mathbb{R}^2)$ of $u$ satisfying
		$\|\tilde u\|_{X^{s,b}}\le 2\|u\|_{X^{s,b}_{\mathbb{R}^+,T}}$. Since $\psi_{2T^*}$ is
		real-valued, then,
		\be\label{4.8}
		\psi_{2T^*}N_k(u,u,u)\big|_{\mathbb{R}^+\times(0,T)}
		=N_k(\psi_{2T^*}\tilde u,\tilde u,\tilde u)\big|_{\mathbb{R}^+\times(0,T)} .
		\ee
		Hence, by the definition of the restriction norm and Theorem \ref{thm 1.4}, one
		has,
		\be\label{4.9}
		\|\psi_{2T^*}N_k(u,u,u)\|_{X^{s,-b}_{\mathbb{R}^+,T}}
		\le \|N_k(\psi_{2T^*}\tilde u,\tilde u,\tilde u)\|_{X^{s,-b}}
		\le c_2\|\psi_{2T^*}\tilde u\|_{X^{s,b'}}\|\tilde u\|^2_{X^{s,b'}} ,
		\ee
		and identically in $Y^{s,-b}$. Applying the multiplier estimate \eqref{4.3} with
		$b'<b$, we obtain,
		\be\label{4.10}
		\|\psi_{2T^*}\tilde u\|_{X^{s,b'}}
		\le c_3(T^*)^{b-b'}\|\tilde u\|_{X^{s,b}}
		=c_3(T^*)^{\delta}\|\tilde u\|_{X^{s,b}} ,
		\ee
		and since $\|\tilde u\|_{X^{s,b'}}\le\|\tilde u\|_{X^{s,b}}\le 2r$, we can conclude
		that,
		\be\label{4.11}
		\|\psi_{2T^*}N_k(u,u,u)\|_{X^{s,-b}_{\mathbb{R}^+,T}}
		+\|\psi_{2T^*}N_k(u,u,u)\|_{Y^{s,-b}_{\mathbb{R}^+,T}}
		\le 16c_2c_3(T^*)^{\delta}r^3 .
		\ee
		Substituting \eqref{4.11} into \eqref{4.7} yields,
		\be\label{4.12}
		\|\Phi_{T^*}u\|_{X^{s,b}_{\mathbb{R}^+,T}}
		\le c_1R+16c_1c_2c_3(T^*)^{\delta}r^3 .
		\ee
		Thus, if we choose $r\doteq2c_1R$, then $\Phi_{T^*}(B(r))\subset B(r)$ as soon as
		\be\label{4.13}
		16c_1c_2c_3(T^*)^{\delta}r^2\le\tfrac12 .
		\ee
		\medskip
		\noindent\textbf{Step 2 ($\Phi_{T^*}$ is a contraction).}
		Let $u,v\in B(r)$. Choose an extension $\tilde v$ of $v$ with
		$\|\tilde v\|_{X^{s,b}}\le 2r$, an extension $w$ of $u-v$ with
		$\|w\|_{X^{s,b}}\le 2\|u-v\|_{X^{s,b}_{\mathbb{R}^+,T}}$, and set
		$\tilde u\doteq\tilde v+w$. Then $\tilde u$ extends $u$ with,
		\be\label{4.14}
		\|\tilde u\|_{X^{s,b}}\le 6r,
		\qquad
		\|\tilde u-\tilde v\|_{X^{s,b}}\le 2\|u-v\|_{X^{s,b}_{\mathbb{R}^+,T}} .
		\ee
		Since $N_k$ is trilinear (each entry being either linear or conjugate-linear) and
		$\psi_{2T^*}$ is real-valued, one has
		\be\label{4.15}
			\psi_{2T^*}\big[N_k(\tilde u,\tilde u,\tilde u)-N_k(\tilde v,\tilde v,\tilde v)\big]
			=N_k\big(\psi_{2T^*}(\tilde u-\tilde v),\tilde u,\tilde u\big)
			+N_k\big(\psi_{2T^*}\tilde v,\tilde u-\tilde v,\tilde u\big)
			+N_k\big(\psi_{2T^*}\tilde v,\tilde v,\tilde u-\tilde v\big).
		\ee
		By \eqref{4.15}, Theorem \ref{thm 1.4} and the multiplier estimate \eqref{4.3},
		and using $b'<b$, we obtain,
		\be\label{4.16}
		\Big\|\psi_{2T^*}\big[N_k(\tilde u,\tilde u,\tilde u)-N_k(\tilde v,\tilde v,\tilde v)\big]
		\Big\|_{X^{s,-b}}
		\le c_2c_3(T^*)^{\delta}\|\tilde u-\tilde v\|_{X^{s,b}}
		\Big(\|\tilde u\|_{X^{s,b}}^2+\|\tilde u\|_{X^{s,b}}\|\tilde v\|_{X^{s,b}}
		+\|\tilde v\|_{X^{s,b}}^2\Big),
		\ee
		and the same bound holds in $Y^{s,-b}$. Therefore, by the linear estimate \eqref{4.7},
		it follows that
		\be\label{4.17}
		\begin{aligned}
			\|\Phi_{T^*}u-\Phi_{T^*}v\|_{X^{s,b}_{\mathbb{R}^+,T}}
			&\le 2c_1c_2c_3(T^*)^{\delta}\|\tilde u-\tilde v\|_{X^{s,b}}
			\Big(\|\tilde u\|_{X^{s,b}}^2+\|\tilde u\|_{X^{s,b}}\|\tilde v\|_{X^{s,b}}
			+\|\tilde v\|_{X^{s,b}}^2\Big)\\
			&\le 2c_1c_2c_3(T^*)^{\delta}\cdot 2\|u-v\|_{X^{s,b}_{\mathbb{R}^+,T}}
			\cdot(36+12+4)\,r^2\\
			&=208c_1c_2c_3(T^*)^{\delta}r^2\,\|u-v\|_{X^{s,b}_{\mathbb{R}^+,T}} .
		\end{aligned}
		\ee
		Hence $\Phi_{T^*}$ is a contraction on $B(r)$ provided that
		\be\label{4.18}
		208c_1c_2c_3(T^*)^{\delta}r^2\le\tfrac12 ,
		\ee
		which clearly implies the onto condition \eqref{4.13}.\\
		\medskip
		\noindent\textbf{Step 3 (Lifespan).}
		Solving \eqref{4.18} for $T^*$ and recalling that $r=2c_1R$ gives
		\be\label{4.19}
		T^*\le\big(416c_1c_2c_3\,r^2\big)^{-\fr{1}{\delta}}
		=\big(1664c_1^2c_2c_3\,R^2\big)^{-\fr{1}{\delta}}.
		\ee
		Since $\langle R\rangle=1+R\ge\max(R,1)$, we can choose
		\be\label{4.20}
		T_0\doteq c_0\langle R\rangle^{-\fr{2}{\delta}},
		\qquad
		c_0\doteq\min\Big\{T,\ \big(1664c_1^2c_2c_3\big)^{-\fr{1}{\delta}}\Big\},
		\ee
		so that $0<T_0<T$ and \eqref{4.18} holds with $T^*=T_0$. Thus,
		\be\label{4.21}
		T_0\sim\langle R\rangle^{-\fr{2}{\delta}},
		\ee
		which is the second half of \eqref{1.30}. Note that, in contrast to the lifespan
		$T_0\sim\langle R\rangle^{-\fr{4}{\beta_2}}$ of Himonas and Yan, where
		$\beta_2=\beta_2(s,m)\to 0$ as $s\to s_m$, for $k=0,1,2$ the exponent $\fr{2}{\delta}$ here
		remains bounded at $s=s_m$ (see Remark \ref{rem 1.5}); for $k=3$ the gap
		$\delta$ vanishes only as $s\to s^*_m$.\\
		\medskip
		\noindent\textbf{Step 4 (Conclusion).}
		Since $B(r)$ is closed in the Banach space $X^{s,b}_{\mathbb{R}^+,T}$, the contraction
		mapping principle yields a unique $u\in B(r)$ with $\Phi_{T_0}u=u$. As
		$\psi_{2T_0}(t)=1$ for $|t|\le T_0$, the restriction of $u$ to
		$\mathbb{R}^+\times(0,T_0)$ solves the IBVP \eqref{1.3} on $(0,T_0)$, and by
		\eqref{4.12} it satisfies
		$\|u\|_{X^{s,b}_{\mathbb{R}^+,T_0}}\le r=2c_1R$, which is the first half of
		\eqref{1.30}. The Lipschitz continuity of the data-to-solution map follows directly
		from the contraction mapping principle. The proof is now complete.
	\end{proof}

\section*{Acknowledgements}
S.  Li is supported by the National Natural Science Foundation of China (No.  12001084 )

\end{document}